\newtheorem{theorem}{Theorem}
\newtheorem{corollary}[theorem]{Corollary}
\newtheorem{lemma}[theorem]{Lemma}
\newtheorem{remark}[theorem]{Remark}
\newtheorem{definition}[theorem]{Definition}
\newtheorem{proposition}[theorem]{Proposition}
\newcommand{\boldcal}[1]{\boldsymbol{\mathcal{#1}}}
\newcommand{\textupmd}[1]{\textup{\textmd{#1}}}
\newcommand{\davector}[1]{#1}
\newcommand{\matrixfont}{\boldcal}
\newcommand{\damatrix}[1]{\matrixfont{#1}}
\newcommand{\datimecyl}[2]{\newcommand{#1}{{#2}_{\maxT}}}
\newcommand{\newvector}[2]{%
\expandafter\expandafter\expandafter\def\expandafter\csname%
genericvectbasename#1\endcsname{#2}
\expandafter\expandafter\expandafter\def\expandafter\csname%
gvectsc#1\endcsname{#2}
\expandafter\expandafter\expandafter\def\expandafter\csname%
gvect#1\endcsname{\davector{#2}}
}
\newcommand{\newmatrix}[2]{%
\expandafter\expandafter\expandafter\def\expandafter\csname%
genericmatrbasename#1\endcsname{#2}
\expandafter\expandafter\expandafter\edef\expandafter\csname%
genericmatrbasenameup#1\endcsname{\uppercase{#2}}
\expandafter\expandafter\expandafter\def\expandafter\csname%
gmatr#1\endcsname{\damatrix{\uppercase{#2}}}
\expandafter\expandafter\expandafter\edef\expandafter\csname%
gmatrel#1\endcsname{#2}}
\newcommand{\SpDim}{N}
\newcommand{\Oset}{\varOmega}
\newcommand{\Ballbasename}{B}
\newcommand{\Ball}[2]{ \ifthenelse{ \equal{#1}{*} \or \equal{#1}{\empty} }{%
            \Ballbasename(#2) }{ \ifthenelse{ \equal{#2}{*} \or \equal{#2}{\empty}}{%
            \Ballbasename_{#1} }{ \Ballbasename_{#1}(#2)}
            }
}
\newcommand{\const}{\gamma}
\newcommand{\sobx}[1][]{\ifthenelse{\equal{#1}{}}%
{\def\@sobx{1}}{\def\@sobx{#1}}%
\frac{\SpDim #1}{\SpDim - \@sobx}}
\newcommand{\isobx}[1][]{\ifthenelse{\equal{#1}{}}%
{\def\@sobx{1}}{\def\@sobx{#1}}%
\frac{\SpDim - \@sobx}{\SpDim #1}}
\newcommand{\maxT}{T}
\datimecyl{\Ocyl}{\Oset}
\newcommand{\@di}{\textupmd{d}}
\newcommand{\di}{\,\@di}
\newcommand{\grad}{\operatorname{\nabla}}
\newcounter{da@aux}
\newcounter{da@auxb}
\newcommand{\onlyifnotone}[1]{\ifnum\arabic{#1}=1\else\arabic{#1}\fi}
\newcommand{\der}[3][1]{
\ifthenelse{ \equal{#1}{1} }{ \def\@deraux{} }{ \def\@deraux{#1} }
\frac{\@di^{\@deraux}#2}
            {\@di #3^{\@deraux}}}
\newcommand{\pder}[2]{\setcounter{da@aux}{-1}\setcounter{da@auxb}{0}%
\def\Last@Tok{\relax}%
\def\List@Tok{\relax}%
\Check@Tok#2\relax%
\frac{\partial^{\onlyifnotone{da@auxb}} #1}{\List@Tok}}
\newcommand{\Check@Tok}[1]{\xdef\@Argom{#1}
\ifnum\arabic{da@aux}=-1 \setcounter{da@aux}{0}
\xdef\Last@Tok{\@Argom}\fi%
\ifx#1\relax \let\next=\relax%
\xdef\ProvList@Tok{\List@Tok}%
\xdef\List@Tok{\ProvList@Tok \partial\Last@Tok^{\onlyifnotone{da@aux}}}%
\addtocounter{da@auxb}{\value{da@aux}}%
\else%
\ifx\Last@Tok\@Argom\stepcounter{da@aux}%
\else
\xdef\ProvList@Tok{\List@Tok}%
\xdef\List@Tok{\ProvList@Tok \partial\Last@Tok^{\onlyifnotone{da@aux}}}%
\xdef\Last@Tok{\@Argom}%
\addtocounter{da@auxb}{\value{da@aux}}
\setcounter{da@aux}{1}%
\fi
\let\next=\Check@Tok
\fi
\next
}
\newcommand{\avint}[2][-]{\mathchoice
   {\@avint[d]{\displaystyle}{\textstyle}{#1}{#2}}%
   {\@avint{\textstyle}{\scriptstyle}{#1}{#2}}%
   {\@avint{\scriptstyle}{\scriptscriptstyle}{#1}{#2}}%
   {\@avint{\scriptscriptstyle}{\scriptscriptstyle}{#1}{#2}}%
}
\newcommand{\@avint}[5][*]{
    \ifx\ilimits@\displaylimits\def\@@avintchk{Y}\else
    \def\@@avintchk{N}\fi
     \ifthenelse{ \equal{#1}{d} \and
        \equal{\@@avintchk}{Y} }{%
     {\setbox0=\hbox{$#2{#3#4}{\int_{#5}}$}\kern .5\wd0
     \vcenter{\hbox{$#3#4$}}\kern-.5\wd0}\!\int_{#5}}{
     {\setbox0=\hbox{$#2{#3#4}{\int}$}\kern .5\wd0
     \vcenter{\hbox{$#3#4$}}\kern-.5\wd0}\!\int_{#5}}}
\newcommand{\abs}[1]{\lvert#1\rvert}
\newcommand{\Abs}[1]{\left|#1\right|}
\newcommand{\Pposbase}[3][*]{\ifthenelse{\equal{#1}{*}}%
{(#2)_{#3}}{\left(#2\right)_{#3}}}
\newcommand{\@pint}[2]{(#1,#2)}
\newcommand{\pint}[3][*]{\ifthenelse{\equal{#1}{*}}{\def\@daHil{}}%
{\def\@daHil{#1}}{\@pint{#2}{#3}}_{\@daHil}}
\newcommand{\Lspbasename}{L}
\newcommand{\Lsp}[3][\relax]{\ifthenelse{\equal{#3}{\empty}}%
{{{\Lspbasename}^{#2}}}{{{\Lspbasename}^{#2}}#1(#3#1)}}
\newcommand{\Cspbasename}{C}
\newcommand{\Csp}[3][\relax]{\ifthenelse{\equal{#3}{\empty}}%
{{{\Cspbasename}^{#2}}}{{{\Cspbasename}^{#2}}#1(#3#1)}}
\DeclareMathOperator{\Div}{div}
\newcommand{\norm}[1]{\lVert#1\rVert}
\newcommand{\Norm}[1]{\left\lVert#1\right\rVert}
\newcommand{\norma}[2]{\norm{#1}_{#2}}
\newcommand{\Norma}[2]{\Norm{#1}_{#2}}
\newcommand{\Measbase}[2][*]{\ifthenelse{\equal{#1}{*}}%
{\lvert#2\rvert}{\left|#2\right|}}
\newcommand{\Hareabase}[2][*]{\ifthenelse{\equal{#1}{*}}%
{\lvert#2\rvert_{\SpDim-1}}{\left|#2\right|_{\SpDim-1}}}
\newcommand{\scpr}{\cdot}
\newcommand{\oo}{\infty}
\newcommand{\eps}{\varepsilon}
\newcommand{\phj}{\varphi}
\newcommand{\Om}{\varOmega}
\newcommand{\diracm}{\@ifnextchar*{\diracm@}{\diracm@@}}
\def\diracm@*#1{\diracm@@{\{#1\}}}
\newcommand{\diracm@@}[1]{\delta_{#1}}
\newlength{\Indent}
\newlength{\Parskip}
\newcommand{\ignore}[1]{\relax}
\newcommand{\daUndef}[1]{\let#1\UnDeFiNeD}
\newcommand{\tshift}[1]{\widetilde{#1}}
\newcommand{\ve}{v_{\eps}}
\newcommand{\aun}{a_1}
\newcommand{\aune}{\aun^\eps}
\newcommand{\adu}{a_2}
\newcommand{\adue}{\adu^\eps}
\newcommand{\bun}{b_1}
\newcommand{\bune}{\bun^\eps}
\newcommand{\bdu}{b_2}
\newcommand{\bdue}{\bdu^\eps}
\newcommand{\Bm}{B}
\newcommand{\Bme}{\Bm^\eps}
\newcommand{\bp}{b}
\newcommand{\bpe}{\bp^\eps}
\newcommand{\fpco}{\Big(\bdue+\varepsilon\frac{\bpe}{\bune}\Big)}
\newcommand{\tsfpco}{\Big(\tshift{\bdue}
+\varepsilon\frac{\tshift{\bpe}}{\tshift{\bune}}\Big)}
\newcommand{\fpcov}{\Big[\fpco \ve\Big]}
\newcommand{\tsfpcov}{\Big[\tsfpco \tshift{\ve}\Big]}
\newcommand{\Hper}{H^1_{\#}}
\newlength{\pecettawidth}
\begin{document}
\title{Diffusion in inhomogeneous media with periodic microstructures}

\authorc{Micol Amar\emailf{micol.amar@sbai.uniroma1.it}, 
Daniele Andreucci\emailf{daniele.andreucci@sbai.uniroma1.it},
Emilio N.M.\ Cirillo\emailf{emilio.cirillo@uniroma1.it}}
\affiliation{Dipartimento di Scienze di Base e Applicate per l'Ingegneria,
             Sapienza Universit\`a di Roma,
             via A.\ Scarpa 16, I--00161, Roma, Italy.}


\begin{abstract}
Diffusion in inhomogeneous materials can be described by both
the Fick and Fokker--Planck diffusion equations.
Here, we study a mixed Fick and Fokker--Planck diffusion
problem with coefficients rapidly oscillating both in
space and time.
We obtain macroscopic models performing the homogenization
limit by means of the unfolding technique.
\end{abstract}


\keywords{Diffusion; Fick's law; Fokker--Planck diffusion law;
homogenization.}



\maketitle

\renewcommand{\contentsname}{\vskip -1.2 cm $\phantom.$\par}
\addtocontents{toc}{\protect\setcounter{tocdepth}{-1}}
\tableofcontents
\addtocontents{toc}{\protect\setcounter{tocdepth}{3}}

\section{Introduction}
\label{s:introduzione}
\par\noindent
The study of the motion of particles diffusing in a confined
region is relevant in many different fields (see, for instance,
the recent papers
\cite{CCS2018,CKMS2016,CKMSS2016,DmiKL2019,dCGdAM2015,LCW2016}
and the references therein).
In several studies, it has been shown that the interaction of particles
with the
walls results into a diffusive coefficient depending on the
space coordinates \cite{HDL2006,LBLO2001}.
A rather natural microscopic counterpart is represented by the
random walk models, with hopping probabilities depending on
the site coordinates. Such kind of models have been, for instance,
introduced in the study of wetting phenomena, in which the effect of
competition between long range attraction and reflection
at the wall is modeled \cite{DcDH2008}.
We also mention that space dependent diffusion is also
considered in some biological ionic channel models, to justify
the selection of ionic species \cite{AAB2017bis,ABC2014}.

In the context of diffusion motion in inhomogeneous materials,
due to the space dependence of the diffusion coefficient,
the derivation of the macroscopic equation is not straightforward.
Indeed,
assuming that the flux is given either by $-B\nabla u$ or
$-\nabla (cu)$, where $B$ and $c$
represent the diffusion coefficient and $u$
the density field, gives rise to two different diffusion equations,
known in the literature as the Fick and the Fokker--Planck
diffusion laws \cite{L1984,MBCS2005,S2008,S1993,SBBP1990}, respectively.
In the recent paper \cite{ACCG2019}, relying on a
hydrodynamic limit computation, it has been proved that the two different choices
mentioned above for the flux are connected to the microscopic
structure of the inhomogeneity. Indeed, for local isotropic space
inhomogeneities, the Fokker--Planck version of the flux is found, whereas
when the space inhomogeneity is exclusively due to local anisotropy, the
Fick expression is recovered. In mixed situations, the general
flux structure $-B\nabla (cu)$ is found and the corresponding general diffusion
law $u_t=\nabla\cdot[B\nabla(cu)]$ is obtained.

Here,
we study such a mixed Fick and Fokker--Planck
diffusion problem for inhomogeneous materials, whose diffusion
properties are described by means of rapidly oscillating coefficients
with respect to both space and time (see
the initial--boundary value
problem \eqref{gpro010}--\eqref{gpro050} below).
We assume that such a material has an underlying periodic microstructure, whose characteristic length
is of order $\varepsilon^\alpha$ ($\varepsilon$ and $\alpha$ being strictly positive real
parameters), while its time oscillation has a period of order
$\varepsilon^\beta$, $\beta$ being another strictly positive parameter.

As usual in this kind of very fast oscillating problems, the main purpose is to
obtain a macroscopic model, overcoming the
difficulties due to the intricate original geometry and appearing, for instance, in the numerical approach.
To this purpose, we are led to let $\varepsilon \to 0$, thus performing a homogenization limit. The resulting
equation models the effective behavior of the medium in the macroscopic setting,
keeping memory, in general, of the underlying periodic structure.
However, the homogenization of the problem \eqref{gpro010}--\eqref{gpro050}
seems to be a too ambitious goal, without some further structural assumptions on the coefficients.
For this reason, we shall
confine our investigation to a particular case introduced
in Section~\ref{s:p-mod}, where the capacitive coefficient in front of the time-derivative
and the Fokker coefficient inside the spatial gradient are assumed to
have a separate dependence on the time and space oscillating variables
(similarly to the classical Fick case, treated in \cite{AAB2017}),
but admitting that the Fokker coefficient
can be perturbed by a non--product additional coefficient
of amplitude $\varepsilon$. We refer to this case as the \emph{weakly non--product case}.
However, as we will see in the sequel, when we consider the
pure Fokker--Planck model (i.e., the diffusion matrix in front of the gradient term is the identity), with unit
capacity, the perturbation does not play any role in the limit equation and disappears from the expression of
the effective coefficients (see Subsections \ref{s:np-mod01} and \ref{s:np-mod02}).

More precisely, in Subsection~\ref{s:np-mod01}, by using the well--known
two--scale expansion technique,
introduced in \cite{lions}, we formally show that the
non--product perturbation does not affect the upscaled equation, when its amplitude $\varepsilon$ is
of the same order of the spatial oscillation period $\varepsilon^\alpha$ (i.e. $\alpha= 1$),
as long as we assume the diffusion matrix $B=I$. This result is also rigorously proven in Section \ref{s:hom}.
It is rather natural to ask what
would happen if such a perturbation were more intense with
respect to the microscopic oscillation scale. This case will be considered in
Subsection \ref{s:np-mod02}.
However, since in the formal expansions we are obliged to deal only with integer powers,
we cannot consider an exponent smaller than one for the
$\varepsilon$ amplitude of the non--product perturbation.
Hence, we accelerate the microscopic spatial oscillations choosing
the smaller oscillation period $\varepsilon^2$.
We will show that also in this case the small non--product perturbation
does not affect the upscaled equation.
However, we do not propose this as a general conclusion,
since it could depend on the special choice of the diffusion matrix and the capacity coefficients.

In Section \ref{s:hom}, we will rigorously prove that
the same property holds also in the general mixed Fick and Fokker-Planck case, if the amplitude of the
non--product perturbation in the Fokker coefficient
is strictly smaller than the spatial oscillation period, i.e. $\alpha<1$.

At our knowledge, diffusion problems governed by Fick and/or Fokker--Planck
laws depending on capacitive, diffusive and Fokker coefficients,
highly oscillating with respect to
time and space simultaneously are
not considered in an extensive body of mathematical literature.
Among the few results, we recall \cite{AAB2017,AAB2017bis,AAGT2019,Floden:Holmbom:Lindberg:2012,Floden:Holmbom:Lindberg:Persson:2013,
Holmbom:1997,Holmbom:Svanstedt:Wellander:2005,Persson:2012}.

In particular, in \cite{AAB2017} the authors have considered a homogenization problem in
the framework of the standard heat equation, which is very close to the case analyzed in the present research.
The main novelty of that paper, with respect to former literature,
is not only the presence of a capacitive term, oscillating both in space and time, but also
the fact that the homogenization problem has been solved
under completely general assumptions on the space and
time microscopic oscillation periods, i.e.
the oscillation periods $\tau$ and $\varepsilon$, respectively,
for time and space variables, are completely independent.

In this respect, those authors had to distinguish between two
cases, namely, when the space period is smaller than or larger
than the square of the time period. These cases were called
\textit{fast} and \textit{slow} oscillations, respectively.

In the present paper, we shall have to distinguish between
these two situations, as well; however, we will confine our investigation
only to the case where time and space oscillation periods are powers
of the common small parameter $\varepsilon$ which, as recalled above, represents the perturbation
size.

The approach we follow here is essentially the same as the one adopted in
\cite{AAB2017} and it is based on the periodic unfolding homogenization technique,
first introduced in \cite{Cioranescu:Damlamian:Griso:2002,Cioranescu:Damlamian:Griso:2008}.
Part of our results are consequences of some properties already proven in \cite{AAB2017},
but the novelty of the present research relies on the new structure of
the equation under consideration, which cannot be reduced to the classical Fick case
considered in \cite{AAB2017}. Moreover, the presence of the
non--product perturbation
in the Fokker coefficient
represents a further non trivial feature of the problem.

It is worthwhile also to point out that the resulting homogenized
equation has a non--standard
structure, since it remains in an integral form with respect to the microvariables
and, moreover, the capacity, the diffusivity and the
Fokker coefficients mix
in the limit (see Subsection \ref{s:spec}),
similarly as in the pure Fick case studied in \cite{AAB2017},
where the capacity and the diffusion coefficients
appear in a mixed form in the upscaled equation (see \eqref{b=2a}).

Only when the diffusion matrix is the identity (i.e., in the pure
Fokker--Planck case) and the
capacity is constant, the limit equations
assume the standard form analogous to the
starting one (see equation \eqref{nppr240}),
and in this case the memory of the periodic microstructure
remains in the limit only as an average of the coefficients.

\medskip

The paper is organized as follows: in Section \ref{s:int050},
we present the general problem.
In Section~\ref{s:p-mod} we introduce the weaker version
that
we shall be able to treat rigorously, state
our main results,
and discuss some heuristics based on formal expansions.
Some preliminary statements are proven in Section~\ref{s:pre}
and, finally, in Section~\ref{s:hom} we state and prove our
rigorous results.

\section{The problem}
\label{s:FPpro}
\par\noindent
Let $\Omega$ be an open connected bounded set in $\mathbb{R}^n$
with Lipschitz boundary, $T>0$,
and set $\Omega_T=\Omega\times(0,T)$.
Let
$\mathcal{Y}=(0,1)^n$,
$\mathcal{S}=(0,1)$,
and call
$\mathcal{Q}=\mathcal{Y}\times\mathcal{S}$
the \emph{microscopic cell} or, simply, the \emph{cell}.

Given a function $w\in L^2(\Omega)$ (or $w\in L^2(\Omega_T)$), we will denote
by $\Vert w\Vert_2$ its $L^2(\Omega)$-norm (or $L^2(\Omega_T)$-norm, respectively).
Finally, $\const$ will denote a strictly positive constant, which may vary from line to line.

\subsection{The general problem}
\label{s:int050}
\par\noindent
Consider the real functions
$a(x,t,y,\tau)$,
$c(x,t,y,\tau)$,
and
the $n\times n$--matrix function
$B(x,t,y,\tau)$
with $(x,t)\in\Omega_T$ and
$\mathcal{Q}$--periodic in $(y,\tau)$.
We assume that
$B\in
L^\infty(\Omega_T\times\mathcal{Q};\mathbb{R}^{n\times n})$
is symmetric and
satisfies the bounds
\begin{equation}
\label{ppro00-000}
C^{-1}|\xi|^2\le B_{ij}(x,t,y,\tau)\xi_i\xi_j\le C|\xi|^2,
\end{equation}
for every $\xi\in\mathbb{R}^n$ and almost every
$(x,t,y,\tau)\in\Omega_T\times\mathcal{Q}$.
We assume, also, that
$a,c\in
L^\infty(\Omega_T\times\mathcal{Q})$
satisfy the bounds
\begin{equation}
\label{ppro00-200}
C^{-1}\le a(x,t,y,\tau),c(x,t,y,\tau)\le C\,,
\end{equation}
for almost every $(x,t,y,\tau)\in\Omega_T\times\mathcal{Q}$.
Moreover, we assume that
$a,c,B_{ij}$ are Lipschitz--continuous on $\overline\Omega_T\times\overline {\mathcal{Q}}$.

Let $\alpha,\beta>0$
and set
\begin{equation}
\label{gpro000}
a^\varepsilon(x,t)=a\Big(x,t,\frac{x}{\varepsilon^\alpha},
\frac{t}{\varepsilon^\beta}\Big),\quad
c^\varepsilon(x,t)=c\Big(x,t,\frac{x}{\varepsilon^\alpha},
\frac{t}{\varepsilon^\beta}\Big),\quad
B^\varepsilon(x,t)
=
B\Big(x,t,\frac{x}{\varepsilon^\alpha},\frac{t}{\varepsilon^\beta}\Big)
.
\end{equation}
Given $f\in L^2(\Omega_T)$
and $\bar u\in H^1_0(\Omega)$,
we are interested in studying the family of mixed Fick and Fokker--Planck problems
with oscillating coefficients
\begin{align}
\label{gpro010}
a^\varepsilon
\frac{\partial u_\varepsilon}{\partial t}
-\Div
(
 B^\varepsilon\nabla(c^\varepsilon u_\varepsilon)
)
=f
,
&
\;\;\;
\textrm{ in }
\Omega_T=\Omega\times(0,T);
\\
\label{gpro020}
u_\varepsilon(x,t)=0
,
&
\;\;\;
\textrm{ on }
\partial\Omega\times(0,T);
\\
\label{gpro050}
u_\varepsilon(x,0)=
\bar u(x)
,
&
\;\;\;
\textrm{ in }
\Omega
.
\end{align}
Note that, in the case $c=1$, the pure Fick problem is recovered,
while in the case $B=I$, we obtain the pure Fokker-Planck problem.
The terms $a$, $B$, $c$, and $f$ will be respectively called
\textit{capacity coefficient}, \textit{diffusion matrix},
\textit{Fokker coefficient},
and
\textit{source term}.

If we let $v_\varepsilon=c^\varepsilon u_\varepsilon$, the above problem
can be rewritten as the following Fick problem with linear
lower order terms
\begin{align}
\label{gpro060}
\frac{\partial v_\varepsilon}{\partial t}
-\Div
\Big(
 \frac{c^\varepsilon}{a^\varepsilon}
 B^\varepsilon\nabla v_\varepsilon
\Big)
+
 B^\varepsilon
\nabla
\Big(
 \frac{c^\varepsilon}{a^\varepsilon}
\Big)
\cdot
 \nabla v_\varepsilon
-
\frac{1}{c^\varepsilon}
\frac{\partial c^\varepsilon}{\partial t}
v_\varepsilon
=
\frac{c^\varepsilon}{a^\varepsilon}
f
,
&
\;\;\;
\textrm{ in }
\Omega_T=\Omega\times(0,T);
\\
\label{gpro070}
v_\varepsilon(x,t)=0
,
&
\;\;\;
\textrm{ on }
\partial\Omega\times(0,T);
\\
\label{gpro080}
u_\varepsilon(x,0)=
c^\varepsilon(x,0)
\bar u(x)
,
&
\;\;\;
\textrm{ in }
\Omega
.
\end{align}

We remark that, by \cite[Chapter~4, Theorem~9.1]{LSU},
for every $\varepsilon>0$ fixed,
the problem \eqref{gpro060}--\eqref{gpro080}
admits a unique solution
$v_\varepsilon\in L^2(0,T;H^2(\Omega))\cap H^1(0,T;L^2(\Omega))$.
Clearly this implies existence and uniqueness of the solution
$u_\varepsilon\in L^2(0,T;H^1(\Omega))\cap H^1(0,T;L^2(\Omega))$
of the problem \eqref{gpro010}--\eqref{gpro050}.

As we pointed out above, the homogenization of the previous problem
in its full generality provides some very hard technical difficulties.
For this reason, we shall treat only the special \emph{weakly non--product}
case described in the following Section~\ref{s:p-mod}.

\subsection{The weakly non--product problem}
\label{s:p-mod}
\par\noindent
Here we consider a special case in which the coefficients
$a$ and $c$
of the problem \eqref{gpro010}--\eqref{gpro050}
are factored in one term depending on $(x,t,y)$ and
another on $(x,t,\tau)$; namely, the dependence on the
micro--variables is separated.
However, for the coefficient $c$, typical of the Fokker--Planck
equation, we can admit a small general perturbation of the
product part.

Consider the real functions
$a_1(x,t,y)$,
$a_2(x,t,\tau)$,
$b_1(x,t,y)$,
$b_2(x,t,\tau)$,
and
$b(x,t,y,\tau)$,
with $(x,t)\in\Omega_T$ and
$\mathcal{Q}$--periodic in $(y,\tau)$.
We assume, also, that
$a_1,b_1\in
L^\infty(\Omega_T\times\mathcal{Y})$,
$a_2,b_2\in
L^\infty(\Omega_T\times\mathcal{S})$,
and
$b\in
L^\infty(\Omega_T\times\mathcal{Q})$
satisfy the bounds
\begin{equation}
\label{ppro00-500}
C^{-1}\le
a_1(x,t,y),
a_2(x,t,\tau),
b_1(x,t,y),
b_2(x,t,\tau),
b(x,t,y,\tau)
\le C\,,
\end{equation}
for almost every $(x,t,y,\tau)\in\Omega_T\times\mathcal{Q}$.
Moreover, we assume that
$a_1,b_1$ are Lipschitz--continuous in $\overline\Omega_T\times\overline{\mathcal{Y}}$,
$a_2,b_2$ in $\overline\Omega_T\times\overline{\mathcal{S}}$,
and
$b$ in $\overline\Omega_T\times\overline{\mathcal{Q}}$.
We keep the same assumptions as in Subsection~\ref{s:int050}
for $B$, $f$, and $\bar u$.

Similarly as above, for $(x,t)\in\Omega_T$, we set
\begin{equation}
\label{prob000}
a_1^\varepsilon(x,t)=a_1\Big(x,t,\frac{x}{\varepsilon^\alpha}\Big),  \;\;
a_2^\varepsilon(x,t)=a_2\Big(x,t,\frac{t}{\varepsilon^\beta}\Big),\;\;
\end{equation}
\begin{equation}
\label{prob010}
b_1^\varepsilon(x,t)=b_1\Big(x,t,\frac{x}{\varepsilon^\alpha}\Big),  \;\;
b_2^\varepsilon(x,t)=b_2\Big(x,t,\frac{t}{\varepsilon^\beta}\Big), \;\;
\end{equation}
and
\begin{equation}
\label{ppro000}
b^\varepsilon(x,t)=b\Big(x,t,\frac{x}{\varepsilon^\alpha},
\frac{t}{\varepsilon^\beta}\Big).
\end{equation}
For $\varepsilon>0$,
we will study the family of problems
\begin{align}
\label{prob030}
a_1^\varepsilon
a_2^\varepsilon
\frac{\partial u_\varepsilon}{\partial t}
-\Div
(
 B^\varepsilon\nabla(
 (b_1^\varepsilon b_2^\varepsilon+\varepsilon b^\varepsilon)
 u_\varepsilon)
)
=f
,
&
\;\;\;
\textrm{ in }
\Omega_T;
\\
\label{prob040}
u_\varepsilon(x,t)=0
,
&
\;\;\;
\textrm{ on }
\partial\Omega\times(0,T);
\\
\label{prob050}
u_\varepsilon(x,0)=
\bar u(x)
,
&
\;\;\;
\textrm{ in }
\Omega
,
\end{align}
where $\varepsilon>0$.
Note that in the case $b=0$ and $b_1=b_2=1$, we recover
the pure Fick problem
discussed in \cite[Section~3.2]{AAB2017}.
We shall also consider the
following auxiliary problem for
the function $v_\varepsilon(x,t)=b_1^\varepsilon(x,t)u_\varepsilon(x,t)$:
\begin{align}
\label{fas000}
a_1^\varepsilon
a_2^\varepsilon
\frac{\partial}{\partial t}\Big(\frac{v_\varepsilon}{b_1^\varepsilon}\Big)
-\Div
\Big(
 B^\varepsilon\nabla\Big(
 \Big(b_2^\varepsilon+\varepsilon\frac{b^\varepsilon}{b_1^\varepsilon}\Big)
 v_\varepsilon\Big)
\Big)
=f
,
&
\;\;\;
\textrm{ in }
\Omega_T;
\\
\label{fas010}
v_\varepsilon(x,t)=0
,
&
\;\;\;
\textrm{ on }
\partial\Omega\times(0,T);
\\
\label{fas020}
v_\varepsilon(x,0)=
\bar v_\varepsilon(x)
,
&
\;\;\;
\textrm{ in }
\Omega
,
\end{align}
where $\bar v_\varepsilon(x)=b_1^\varepsilon(x,0)\bar u(x)$.
Note that, since
$b_1(\cdot,0,\cdot)\in L^\infty(\Omega\times\mathcal{Y})$,
it follows
\begin{equation}
\label{fas022}
\|\bar{v}_\varepsilon\|_2
\le
C
\,.
\end{equation}
On the other hand $\|\nabla\bar{v}_\varepsilon\|_2=O(\varepsilon^{-\alpha}$).

Note that the weak formulation of problem \eqref{fas000}--\eqref{fas020}
can be written as follows:
\begin{multline}
\label{fas023}
-\int_0^T\int_\Omega
\frac{v_\varepsilon}{b^\varepsilon_1}
\frac{\partial}{\partial t}(a^\varepsilon_1 a^\varepsilon_2\phi)
\,\textrm{d}x\,\textrm{d}t
+
\int_0^T\int_\Omega
 B^\varepsilon\nabla\Big(
 \Big(b_2^\varepsilon+\varepsilon\frac{b^\varepsilon}{b_1^\varepsilon}\Big)
 v_\varepsilon\Big)
\cdot
\nabla\phi
\,\textrm{d}x\,\textrm{d}t
\\
=
\int_0^T\int_\Omega
f\phi
\,\textrm{d}x\,\textrm{d}t
+
\int_\Omega
\frac{\bar{v}_\varepsilon(x)}{b^\varepsilon_1(x,0)}
a^\varepsilon_1(x,0)
a^\varepsilon_2(x,0)
\phi(x,0)
\,\textrm{d}x\,,
\end{multline}
for any $\phi\in H^1(\Omega_T)$ such that $\phi=0$ on
$\partial\Omega\times(0,T)$ and
$\phi(x,T)=0$ a.e.\ in $\Omega$.

For later use, we set
\begin{equation*}
  \omega_{\alpha,1}=
  \begin{cases}
    1\,,&\qquad \alpha=1\,,
    \\
    0\,,&\qquad 0<\alpha<1\,.
  \end{cases}
\end{equation*}

The main result of the paper is the following homogenization theorem, whose proof can be found in Subsection \ref{s:main_dim}.

\begin{theorem}
\label{t:b=2a}
Assume $0<\alpha\leq 1$ and $\beta >0$. For any $\eps>0$, let $u_\varepsilon$ be the unique solution of problem \eqref{prob030}--\eqref{prob050}.
Then, when $\eps\to 0$, $u_\eps\rightharpoonup u$ weakly in $L^2(\Om_T)$, where $u\in L^2(0,T;H^1_0(\Omega))$ is the unique weak solution
of the homogenized problem
\begin{multline}
\int_{\mathcal{Q}}
\Big[
\frac{a_1}{\int_{\mathcal{S}} a_2^{-1}\,\emph{d}\tau}
\frac{\partial}{\partial t}
\Big(
     \frac{u}{b_1\int_\mathcal{Y}b_1^{-1}\,\emph{d}y}
\Big)
-
\frac{1}{a_2\int_{\mathcal{S}} a_2^{-1}\,\emph{d}\tau}
\emph{div}
\Big(
B_{\emph{eff}}
\nabla\Big(b_2
\frac{u}{\int_\mathcal{Y} b_1^{-1}\,\emph{d}y}\Big)
\Big)
\\
-
\frac{1}{a_2\int_{\mathcal{S}} a_2^{-1}\,\emph{d}\tau}
\emph{div}
\Big(
B
\nabla_y\Big(
             \omega_{\alpha,1}\frac{b}{b_1}-b_2\zeta
             +\chi\cdot\nabla b_2
        \Big)
    \frac{u}{\int_\mathcal{Y}b_1^{-1}\,\emph{d}y}
\Big)
\Big]
\,\emph{d}y\,\emph{d}\tau
=
f\,,
\quad
\emph{ in } \Omega_T\,;
\label{b=2a}
\end{multline}
and
\begin{equation}
\label{b=2aini}
\frac{u(x,0)}{\int_{\mathcal{Y}} b_1^{-1}\,\emph{d}y}
=\bar u(x)\left(\int_{\mathcal{Y}}a_1(x,0,y)\,\emph{d}y\right)
       \Big(
            \int_{\mathcal{Y}}
              \frac{a_1(x,0,y)}{b_1(x,0,y)}
              \,\emph{d}y
       \Big)^{-1}\,,
\quad
\emph{ in } \Omega
\,.
\end{equation}
Here, the matrix $B_{\emph{eff}}$ is given by
\begin{equation}
\label{b=2a000}
B_{\emph{eff}}
=
B
\nabla_y(y-\chi)
\end{equation}
and
\\
1) if $\beta= 2\alpha$, $\chi$ and $\zeta$ are the solutions of
\eqref{fas213} and \eqref{fas215}, respectively;
\\
2) if $\beta>2\alpha$, $\chi$ and $\zeta$ are the solutions of
\eqref{fas>213} and \eqref{fas>215}, respectively;
\\
3) if $\beta<2\alpha$, $\chi$ and $\zeta$ are the solutions of
\eqref{fas<213} and \eqref{fas<215}, respectively.
\end{theorem}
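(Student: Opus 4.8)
The plan is to carry out the whole argument on the auxiliary unknown $v_\varepsilon=b_1^\varepsilon u_\varepsilon$ through its weak formulation \eqref{fas023}, which is the right vehicle for unfolding because the spatial gradient enters only through the combination $z_\varepsilon:=\big(b_2^\varepsilon+\varepsilon b^\varepsilon/b_1^\varepsilon\big)v_\varepsilon$. First I would derive uniform a priori estimates. Testing (a regularized version of) \eqref{fas000}--\eqref{fas020} with $z_\varepsilon$ and using the ellipticity \eqref{ppro00-000} of $B$ together with the two--sided bounds \eqref{ppro00-500}, one controls $\|\nabla z_\varepsilon\|_{L^2(\Omega_T)}$ and $\|v_\varepsilon\|_{L^\infty(0,T;L^2(\Omega))}$ uniformly in $\varepsilon$; here the $L^2$--bound \eqref{fas022} on the initial datum is exactly what is needed, and it is essential to work weakly since $\|\nabla\bar v_\varepsilon\|_2=O(\varepsilon^{-\alpha})$. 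Since $b_1^\varepsilon$ is bounded below, $u_\varepsilon=v_\varepsilon/b_1^\varepsilon$ is then bounded in $L^2(\Omega_T)$ and hence weakly precompact, which already yields a candidate limit $u$ along a subsequence.

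Next I would apply the space--time periodic unfolding operator $\mathcal{T}_\varepsilon$ of \cite{Cioranescu:Damlamian:Griso:2002,AAB2017}. Along a subsequence the unfolded fields converge to two--scale limits, $\mathcal{T}_\varepsilon v_\varepsilon\to v_0$, $\mathcal{T}_\varepsilon z_\varepsilon\to z_0$ and $\mathcal{T}_\varepsilon(\nabla z_\varepsilon)\to\nabla_x z_0+\nabla_y\hat z$ with a corrector $\hat z\in L^2(\Omega_T;\Hper(\mathcal{Y}))$. The decisive structural feature is that the parabolic scaling ties the time microscale $\varepsilon^\beta$ to the square $\varepsilon^{2\alpha}$ of the spatial microscale, so that the three regimes $\beta=2\alpha$, $\beta>2\alpha$, $\beta<2\alpha$ produce genuinely different cell problems for the correctors $\chi$ and $\zeta$: in the resonant case the microscopic time variable $\tau$ couples to the $y$--cell operator, whereas in the two non--resonant cases one scale is asymptotically frozen and the cell operator is averaged in $\tau$. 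These are precisely the problems \eqref{fas213}/\eqref{fas215}, \eqref{fas>213}/\eqref{fas>215} and \eqref{fas<213}/\eqref{fas<215}.

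I expect the genuine obstacle to be the time--derivative term. In \eqref{fas023} the rapidly oscillating factor $a_2^\varepsilon(x,t)=a_2(x,t,t/\varepsilon^\beta)$ sits under $\partial_t$, so differentiating $a_1^\varepsilon a_2^\varepsilon\phi$ generates a singular contribution of order $\varepsilon^{-\beta}$ through $\partial_\tau a_2$. Controlling this term is the heart of the proof: following the fast/slow analysis of \cite{AAB2017}, one must test with an oscillating function $\phi_\varepsilon=\phi+\varepsilon^\alpha\phi_1(x,t,x/\varepsilon^\alpha,t/\varepsilon^\beta)$ supplemented by a time corrector chosen so that the $\varepsilon^{-\beta}$ term is exactly compensated. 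It is this compensation that forces the harmonic--type time average $\int_{\mathcal{S}}a_2^{-1}\,\mathrm{d}\tau$ to appear throughout \eqref{b=2a}, rather than a naive arithmetic mean of $a_2$.

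Finally I would pass to the limit term by term in the unfolded weak formulation, inserting the cell relations to collect the contributions into \eqref{b=2a}, with the effective matrix $B_{\mathrm{eff}}$ given by \eqref{b=2a000} via the $\chi$--cell problem. The non--product perturbation $\varepsilon b^\varepsilon$ is unfolded with amplitude $\varepsilon$ against microscopic gradients of order $\varepsilon^{-\alpha}$, so its limiting effect scales like $\varepsilon^{1-\alpha}$ and survives only in the borderline case $\alpha=1$; this is exactly why the factor $\omega_{\alpha,1}$ multiplies $b/b_1$ in \eqref{b=2a} and the perturbation disappears for $\alpha<1$. The initial condition \eqref{b=2aini} follows by passing to the limit in the boundary term of \eqref{fas023} with test functions not vanishing at $t=0$, using the two--scale limit of $\bar v_\varepsilon/b_1^\varepsilon$. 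Since the homogenized problem \eqref{b=2a}--\eqref{b=2aini} is linear and, in its integral--in--$(y,\tau)$ form, uniquely solvable by a standard parabolic energy estimate, the limit $u$ does not depend on the subsequence and the whole family $u_\varepsilon$ converges weakly in $L^2(\Omega_T)$ to $u$.
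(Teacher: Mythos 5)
Your outline reproduces the paper's overall architecture (working with $v_\varepsilon=b_1^\varepsilon u_\varepsilon$ through \eqref{fas023}, unfolding, the three regimes of cell problems, and the $\varepsilon^{1-\alpha}$ origin of the factor $\omega_{\alpha,1}$), but it contains one genuine gap and one step that would fail as described. The gap: you obtain a candidate weak limit $u$ of $u_\varepsilon$ by mere boundedness, and a limit $v$ of $v_\varepsilon$ solving the single-scale equation \eqref{fas200}, but you never connect the two. Since $u_\varepsilon=v_\varepsilon\cdot(1/b_1^\varepsilon)$ is a product of a weakly convergent sequence with a rapidly oscillating factor that converges only weakly-$*$ in $L^\infty$, weak convergence of both factors does not identify the limit of the product; without the identification $u=v\int_{\mathcal{Y}}b_1^{-1}\,\mathrm{d}y$, the homogenized equation \eqref{b=2a} and the initial condition \eqref{b=2aini} cannot be deduced from \eqref{fas200}, and your final subsequence-uniqueness argument has nothing to apply to, because you never show that every subsequential limit of $u_\varepsilon$ solves \eqref{b=2a}. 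The paper closes exactly this point via Proposition~\ref{p:time_comp}: the uniform fractional time-translate estimate \eqref{eq:time_comp}, combined with the energy bound \eqref{fas100}, upgrades the convergence of $v_\varepsilon$ to strong convergence in $L^2(\Omega_T)$ --- standard Aubin--Lions compactness is unavailable here because the time-derivative bounds \eqref{fas140}, \eqref{fas340} blow up as $\varepsilon\to0$ --- and strong times weak-$*$ then yields $u_\varepsilon\rightharpoonup v\int_{\mathcal{Y}}b_1^{-1}\,\mathrm{d}y$. (An alternative identification through the unfolding product rule \eqref{fold080} and Proposition~\ref{fold090} could substitute for the strong convergence, but your sketch contains neither device.)

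The step that would fail: your treatment of the singular time term. You propose an additive oscillating corrector $\varepsilon^\alpha\phi_1(x,t,x/\varepsilon^\alpha,t/\varepsilon^\beta)$ in the test function, ``chosen so that the $\varepsilon^{-\beta}$ term is exactly compensated.'' The orders do not match: differentiating $a_1^\varepsilon a_2^\varepsilon(\phi+\varepsilon^\alpha\phi_1)$ in time, the dangerous contribution $\phi\,\partial_t a_2^\varepsilon$ is of order $\varepsilon^{-\beta}$, while the corrector can only contribute $O(\varepsilon^{\alpha-\beta})$, so no cancellation can occur. The paper's device is multiplicative, not additive: one tests with $\varphi/a_2^\varepsilon$ (and with $\varepsilon^\alpha(\varphi/a_2^\varepsilon)\psi$ for the cell problems), so that $a_1^\varepsilon a_2^\varepsilon\phi_\varepsilon=a_1^\varepsilon\varphi$ carries no fast time oscillation at all, since $a_1^\varepsilon$ depends only on $x/\varepsilon^\alpha$; the harmonic-type average $\int_{\mathcal{S}}a_2^{-1}\,\mathrm{d}\tau$ in \eqref{b=2a} then emerges simply from unfolding the terms containing $1/a_2^\varepsilon$, not from a compensation identity. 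Relatedly, your a priori estimates (testing with $z_\varepsilon$) would give energy bounds but not the time-derivative estimates of Lemmas~\ref{t:vt} and~\ref{t:vtl}, which are precisely what license the unfolding convergences \eqref{fas050}--\eqref{fas062} (via the results of \cite{AAB2017} with $m=1/2$) and the compactness argument of Proposition~\ref{p:time_comp}.
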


\subsubsection{Formal expansions for the weakly non--product problem}
\label{s:np-mod}
\par\noindent
In Section~\ref{s:hom},
we will prove rigorously the macroscopic equations for
problem \eqref{prob030}--\eqref{prob050} in the
case $0<\alpha\le1$ and $\beta>0$.
Namely, we will be able to homogenize the system
in the case in which the spatial oscillations are not too
fast with respect to the amplitude of the non--product
perturbation in the Fokker coefficient
appearing in the Fokker--Planck equation \eqref{prob030}.

However, before the rigorous approach, we first set up some formal expansions. More precisely,
in Subsection~\ref{s:np-mod01}, we consider, as an example,
the case $\alpha=1$ and $\beta=2$, which is indeed rigorously covered
by Theorem~\ref{t:b=2a}. Moreover,
in Subsection~\ref{s:np-mod02}
we formally approach also some cases (with integer exponents) not covered
by the theory developed in Section~\ref{s:hom}, that is to say,
when the spatial oscillations are faster than the amplitude of the
non--product perturbation in the Fokker coefficient (i.e., $\alpha>1$).
In details, we consider the case $\alpha=2$
and $\beta=1,2,4$, in which
time oscillations are respectively slower, as fast as, and
faster
than spatial ones.
Note that the case
$\alpha=2$ and $\beta=4$ corresponds to the natural parabolic scaling.
In our formal expansions arguments, we
assume that the diffusion matrix is $B=\text{Id}$,
the capacity coefficients are $a_1=a_2=1$, and the source term is $f=0$.
Note that this last assumption could be easily removed.

\subsubsection{Space oscillations as fast as the perturbation amplitude}
\label{s:np-mod01}
\par\noindent
As mentioned above in this section, we formally study the case
$\alpha=1$ and $\beta=2$, which is covered by Theorem~\ref{t:b=2a}.

We let $y=x/\varepsilon$
and
$\tau=t/\varepsilon^2$ and,
by abusing the notation, we write the differential rules
\begin{equation}
\label{nppr040}
\frac{\partial}{\partial t}=\frac{\partial}{\partial t}
+\frac{1}{\varepsilon^2}\frac{\partial}{\partial\tau},
\;\;
\nabla_x=\nabla_x+\frac{1}{\varepsilon}\nabla_y,
\textrm{ and }\;\;
\Delta_x=\Delta_x
+
\frac{1}{\varepsilon}
(
 \nabla_y\cdot\nabla_x
 +\nabla_x\cdot\nabla_y
)
+
\frac{1}{\varepsilon^2}\Delta_y
.
\end{equation}
We then look for a solution of \eqref{prob030}, using the formal expansion
\begin{equation}
\label{nppr050}
u_\varepsilon(x,t)
=
u_0(x,t,y,\tau)
+
\varepsilon
u_1(x,t,y,\tau)
+
\varepsilon^2
u_2(x,t,y,\tau)
+\cdots
,
\end{equation}
with $u_k$ a $\mathcal{Q}$-periodic function with respect to $(y,\tau)$.
By replacing \eqref{nppr050} in \eqref{prob030}, we get
\begin{equation}
\label{nppr060}
\frac{\partial u_\varepsilon}{\partial t}
=
\frac{\partial u_0}{\partial t}
+
\frac{1}{\varepsilon^2}
\frac{\partial u_0}{\partial \tau}
+
\varepsilon
\frac{\partial u_1}{\partial t}
+
\frac{1}{\varepsilon}
\frac{\partial u_1}{\partial \tau}
+
\varepsilon^2
\frac{\partial u_2}{\partial t}
+
\frac{\partial u_2}{\partial \tau}
+
\varepsilon
\frac{\partial u_3}{\partial \tau}
+
\varepsilon^2
\frac{\partial u_4}{\partial \tau}
+o(\varepsilon^2)
\end{equation}
and
\begin{align}
\label{nppr070}
\Delta  [(b_1^\varepsilon b_2^\varepsilon+\varepsilon b^\varepsilon)u_\varepsilon]
=
&
\;
\Delta_x
  [(b_1 b_2+\varepsilon b)u_0]
+
\frac{1}{\varepsilon}
 \nabla_y\cdot\nabla_x
  [(b_1 b_2+\varepsilon b)u_0]
\nonumber \\
&
+
\frac{1}{\varepsilon}
 \nabla_x\cdot\nabla_y
  [(b_1 b_2+\varepsilon b)u_0]
+
\frac{1}{\varepsilon^2}
\Delta_y
  [(b_1 b_2+\varepsilon b)u_0]
\nonumber \\
&
+
\varepsilon
\Delta_x
  [(b_1 b_2+\varepsilon b)u_1]
+
 \nabla_y\cdot\nabla_x
  [(b_1 b_2+\varepsilon b)u_1]
\nonumber \\
&
+
 \nabla_x\cdot\nabla_y
  [(b_1 b_2+\varepsilon b)u_1]
+
\frac{1}{\varepsilon}
\Delta_y
  [(b_1 b_2+\varepsilon b)u_1]
\nonumber\\
&
+
\varepsilon^2
\Delta_x
  [(b_1 b_2+\varepsilon b)u_2]
+
\varepsilon
 \nabla_y\cdot\nabla_x
  [(b_1 b_2+\varepsilon b)u_2]
\nonumber \\
&
+
\varepsilon
 \nabla_x\cdot\nabla_y
  [(b_1 b_2+\varepsilon b)u_2]
+
\Delta_y
  [(b_1 b_2+\varepsilon b)u_2]
\nonumber\\
&
+
\varepsilon^2
 \nabla_y\cdot\nabla_x
  [b_1 b_2 u_3]
+
\varepsilon^2
 \nabla_x\cdot\nabla_y
  [b_1 b_2 u_3]
\nonumber \\
&
+
\varepsilon
\Delta_y
  [(b_1 b_2+\varepsilon b)u_3]
+
\varepsilon^2
\Delta_y
  [b_1 b_2 u_4]
+o(\varepsilon^2)
\,.
\end{align}

Thus, at order $1/\varepsilon^2$, we find the equation
\begin{equation}
\label{nppr080}
\frac{\partial u_0}{\partial \tau}
-
\Delta_y (b_1 b_2 u_0)
=0\,.
\end{equation}
Recalling that $b_1$ does not depend on
$\tau$, see \eqref{prob010},
we let
$v_0(x,t,y,\tau)=b_1(x,t,y) u_0(x,t,y,\tau)$
and
find for $v_0$ the equation
\begin{equation}
\label{nppr090}
\frac{1}{b_1}\frac{\partial v_0}{\partial \tau}
-
\Delta_y (b_2 v_0)
=0\,, 
\end{equation}
which must be solved assuming that $v_0$ is $\mathcal{Q}$-periodic in $(y,\tau)$.
We prove, indeed, that $v_0$ does not depend on the microscopic
variables: we first multiply \eqref{nppr090} times $v_0$ and
integrate on the microscopic cell
\begin{align}
\label{nppr100}
0
&
=
\int_\mathcal{Q}
\frac{1}{b_1}\frac{\partial v_0}{\partial \tau}
v_0
\,\text{d}y\,\text{d}\tau
-
\int_\mathcal{Q}
[\Delta_y (b_2 v_0)]
v_0
\,\text{d}y\,\text{d}\tau
\nonumber\\
&
=
\int_\mathcal{Y}
\frac{1}{2b_1}
\left(\int_\mathcal{S}
\frac{\partial v_0^2}{\partial \tau}
\,\text{d}\tau\right)\text{d}y
+
\int_\mathcal{Q}\!\!
b_2
\nabla_y v_0\cdot\nabla_yv_0
\,\text{d}y\,\text{d}\tau
-
\!\!\int_\mathcal{S}\!\!
b_2
\left(\int_{\partial \mathcal{Y}}
v_0\nabla_y v_0\cdot\nu
\,\text{d}\sigma\right)\text{d}\tau\,.
\end{align}
By periodicity, the first and the third integral vanish, hence
\begin{equation}
\label{nppr110}
0
=
\int_\mathcal{Q}
b_2
|\nabla_y v_0|^2
\,\text{d}y\,\text{d}\tau
\ge
C^{-1}
\int_\mathcal{Q}
|\nabla_y v_0|^2
\,\text{d}y\,\text{d}\tau\,,
\end{equation}
where we used \eqref{ppro00-500}.
This implies that $v_0$ is constant with respect to $y$.

On the other hand, since both $b_2$ and $v_0$ do not depend on $y$,
from \eqref{nppr090} we immediately get that $v_0$ does not depend on
$\tau$ as well.
Note that, since $v_0(x,t)=b_1(x,t,y)u_0(x,t,y,\tau)$, we have that
$u_0$ does not depend on $\tau$.

We now consider the $1/\varepsilon$ order equation.
From
\eqref{prob030}, \eqref{nppr060}, and \eqref{nppr070}, we have
\begin{equation}
\label{nppr120}
\frac{\partial u_1}{\partial \tau}
-
\nabla_y\cdot\nabla_x (b_1 b_2 u_0)
-
\nabla_x\cdot\nabla_y (b_1 b_2 u_0)
-
\Delta_y (bu_0)
-
\Delta_y (b_1 b_2 u_1)
=0
.
\end{equation}
Since $b_2$ and $v_0=b_1u_0$ do not depend on $y$, \eqref{nppr120}
simplifies to
\begin{equation}
\label{nppr130}
\frac{\partial u_1}{\partial \tau}
-
\Delta_y (bu_0)
-
\Delta_y (b_1 b_2 u_1)
=0
.
\end{equation}
We now let
$v_1(x,t,y,\tau)=b_1(x,t,y)u_1(x,t,y,\tau)$ and, from \eqref{nppr130},
we get
\begin{equation}
\label{nppr140}
\frac{1}{b_1}
\frac{\partial v_1}{\partial \tau}
-
b_2
\Delta_y v_1
=
\Delta_y
\Big(
     \frac{b}{b_1}
\Big)
v_0
.
\end{equation}
We now look for a solution of the above equation in the factored
form
\begin{equation}
\label{nppr150}
v_1(x,t,y,\tau)=-\zeta(x,t,y,\tau)v_0(x,t)\,,
\end{equation}
with $\zeta$ a $\mathcal{Q}$-periodic function with respect to $(y,\tau)$.
By plugging \eqref{nppr150} into \eqref{nppr140}, we get
that $\zeta$ has to solve the equation
\begin{equation}
\label{nppr160}
\frac{1}{b_1}
\frac{\partial \zeta}{\partial \tau}
-
b_2
\Delta_y \zeta
=
-
\Delta_y
     \Big(
     \frac{b}{b_1}
     \Big)
.
\end{equation}

We, finally, consider the $\varepsilon^0$ order equation, which will yield
a compatibility condition providing an equation for $u_0$.
From
\eqref{prob030}, \eqref{nppr060}, and \eqref{nppr070}, we have
\begin{align}
\label{nppr170}
\frac{\partial u_0}{\partial t}
+
\frac{\partial u_2}{\partial \tau}
&
-
\Delta_x(b_1b_2u_0)
-
\nabla_y\cdot\nabla_x(bu_0)
-
\nabla_x\cdot\nabla_y(bu_0)
-
\nabla_y\cdot\nabla_x(b_1 b_2 u_1)
\nonumber\\
&
-
\nabla_x\cdot\nabla_y(b_1 b_2 u_1)
-
\Delta_y(bu_1)
-
\Delta_y(b_1 b_2 u_2)
=0
,
\end{align}
which can be seen as an equation for $u_2$. Hence, as usual,
we introduce the function
$v_2(x,t,y,\tau)=b_1(x,t,y)u_2(x,t,y,\tau)$ and rewrite \eqref{nppr170}
as
\begin{align}
\label{nppr180}
\frac{1}{b_1}
\frac{\partial v_2}{\partial \tau}
-
b_2
\Delta_y v_2
=
&
\Delta_x(b_2 v_0)
+
\nabla_y\cdot\nabla_x(bu_0)
+
\nabla_x\cdot\nabla_y(bu_0)
+
\nabla_y\cdot\nabla_x(b_2 v_1)
\nonumber\\
&
+
\nabla_x\cdot\nabla_y(b_2 v_1)
+
\Delta_y(bu_1)
-
\frac{\partial u_0}{\partial t}\,,
\end{align}
for $v_2$ a $\mathcal{Q}$-periodic function with respect to $(y,\tau)$.

Now, if we integrate \eqref{nppr180} on $\mathcal{Q}$, since $b_1$
does not depend on $\tau$ and $b_2$ does not depend on $y$, on the
left hand side we find zero. Hence, we have the compatibility condition
\begin{align}
\label{nppr190}
&
\int_\mathcal{Q}
\Big[\frac{\partial u_0}{\partial t}
-\Big(
\Delta_x(b_2 v_0)
+
\nabla_y\cdot\nabla_x(bu_0)
+
\nabla_x\cdot\nabla_y(bu_0)
+
\nabla_y\cdot\nabla_x(b_2 v_1)
\nonumber\\
&
\phantom{
         \int_\mathcal{Q} \Big[
        }
+
\nabla_x\cdot\nabla_y(b_2 v_1)
+
\Delta_y(bu_1)
\Big)\Big]
\,\text{d}y\,\text{d}\tau
=0\,.
\end{align}
By the periodicity on $\mathcal{Q}$ and Gauss-Green formulas, we also have
\begin{equation}
\label{nppr200}
\int_\mathcal{Q}
\Big[
\frac{\partial u_0}{\partial t}
-\Big(\Delta_x(b_2 v_0)
+
\nabla_x\cdot\nabla_y(bu_0)
+
\nabla_x\cdot\nabla_y(b_2 v_1)
\Big)\Big]
\,\text{d}y\,\text{d}\tau
=0\,.
\end{equation}
Since, again by periodicity,
\begin{multline}
\label{nppr210}
\int_\mathcal{Q}
\Big[
\nabla_x\cdot\nabla_y(bu_0)
+
\nabla_x\cdot\nabla_y(b_2 v_1)
\Big]
\,\text{d}y\,\text{d}\tau
=
\\
\int_\mathcal{S}
\Big[
\nabla_x\cdot
\int_\mathcal{Y}
\nabla_y(bu_0)
\,\text{d}y
+
\nabla_x\cdot
\int_\mathcal{Y}
\nabla_y(b_2 v_1)
\,\text{d}y
\Big]
\,\text{d}\tau
=0
,
\end{multline}
we have
\begin{equation}
\label{nppr220}
\frac{\partial }{\partial t}
\Big[
\Big(
\int_\mathcal{Y}
\frac{1}{b_1}
\,\text{d}y
\Big)
v_0
\Big]
-
\Delta_x
\Big[
\Big(
\int_\mathcal{S}
b_2
\,\text{d}\tau
\Big)
v_0
\Big]
=0,
\end{equation}
where we have used that $v_0$ does not depend on $y$ and $\tau$.

We have, finally, found an equation for $v_0$. Indeed, we can deduce
the equation that must be satisfied by the mean value of $u_0$ on the
microscopic cell.
If we let
\begin{equation}
\label{nppr230}
u(x,t)
=
\int_\mathcal{Q}u_0(x,t,y,\tau)
\,\text{d}y\,\text{d}\tau
=
\int_\mathcal{Q}\frac{v_0(x,t)}{b_1(x,t,y)}
\,\text{d}y\,\text{d}\tau
=
v_0(x,t)\int_\mathcal{Y}\frac{\text{d}y}{b_1(x,t,y)}
,
\end{equation}
we can rewrite \eqref{nppr220} as an equation for $u$,
finding
\begin{equation}
\label{nppr240}
\frac{\partial u}{\partial t}
-
\Delta_x
\Big[
\Big(
\int_\mathcal{S}
b_2
\,\text{d}\tau
\Big)
\Big(
\int_\mathcal{Y}
\frac{\text{d}y}{b_1}
\Big)^{-1}
u
\Big]
=0\,.
\end{equation}
It is interesting to note that the non--product small
correction $\varepsilon b^\varepsilon$ in \eqref{prob030}
does not play any role in the upscaled equation.

Note that equation \eqref{nppr220} (resp.\ \eqref{nppr240})
coincides with the rigorous equations obtained in \eqref{fas200} (resp.\ \eqref{b=2a}),
once we have taken into account that,
under the present assumptions, we have:
i) the cell functions
$\chi^j$ in Theorem~\ref{t:b=2afact} are identically
equal to zero;
ii) the cell function
$\zeta$ in Theorem~\ref{t:b=2afact}, which is equal to
the function
$\zeta$ introduced in the equation \eqref{nppr160} above,
and the term
$\nabla_y(b/b_1)$
disappear
from the expressions of $P_\text{eff}$ in \eqref{fas220} and
$z_\text{eff}$ in \eqref{fas230}, due to the periodicity.

\subsubsection{Space oscillations faster than the perturbation amplitude}
\label{s:np-mod02}
\par\noindent
In this section, we formally study the homogenization for the
equation \eqref{prob030} in some cases not covered by the rigorous theory
developed in Section~\ref{s:hom}.
As mentioned above, we shall consider situations in which the
spatial oscillation is faster than the amplitude of the
non--product perturbation present in the Fokker coefficient.

We remark that, as we shall prove in Section~\ref{s:hom}
(for $\alpha<1$ or $\alpha=1$ and $B=I$)
and as we found in Subsection~\ref{s:np-mod01} (for $\alpha=1$),
the non--product perturbation $\varepsilon b^\varepsilon$ appearing
in \eqref{prob030} does not affect the upscaled equation.
In the three cases discussed below, we shall see that this property
is preserved in the case $\alpha=2$, namely, even when the spatial
oscillation is fast, which is expected to reinforce the effect of the perturbation.
We cannot conclude that this is a general result for the scaling
$\alpha>1$;
indeed, it might depend on our peculiar
choice of the diffusion matrix and the capacity coefficients
in the
formal computation.

We first consider the problem \eqref{prob030}--\eqref{prob050}
for $\alpha=\beta=2$.
Indeed, from the point of view of computations,
such a case seems to be the most delicate among those discussed in this section.
We then let $y=x/\varepsilon^2$
and
$\tau=t/\varepsilon^2$ and,
by abusing the notation, we write the differential rules
\begin{equation}
\label{nppr340}
\frac{\partial}{\partial t}=\frac{\partial}{\partial t}
+\frac{1}{\varepsilon^2}\frac{\partial}{\partial\tau},
\;\;
\nabla_x=\nabla_x+\frac{1}{\varepsilon^2}\nabla_y,
\textrm{ and }\;\;
\Delta_x=\Delta_x
+
\frac{1}{\varepsilon^2}
(
 \nabla_y\cdot\nabla_x
 +\nabla_x\cdot\nabla_y
)
+
\frac{1}{\varepsilon^4}\Delta_y
\end{equation}
and look for a solution of \eqref{prob030}, using the formal expansion \eqref{nppr050}. Differentiating in time, we are led again to
\eqref{nppr060}, while differentiation in space yields
\begin{align}
\label{nppr370}
\nabla\cdot
 \nabla
  [(b_1^\varepsilon b_2^\varepsilon+\varepsilon b^\varepsilon)u_\varepsilon]
=
&
\;
\Delta_x
  [b_1 b_2 u_0]
+
\frac{1}{\varepsilon^2}
 \nabla_y\cdot\nabla_x
  [(b_1 b_2+\varepsilon b)u_0]
\nonumber \\
&
+
\frac{1}{\varepsilon^2}
 \nabla_x\cdot\nabla_y
  [(b_1 b_2+\varepsilon b)u_0]
+
\frac{1}{\varepsilon^4}
\Delta_y
  [(b_1 b_2+\varepsilon b)u_0]
\nonumber \\
&
+
\frac{1}{\varepsilon}
 \nabla_y\cdot\nabla_x
  [(b_1 b_2+\varepsilon b)u_1]
+
\frac{1}{\varepsilon}
 \nabla_x\cdot\nabla_y
  [(b_1 b_2+\varepsilon b)u_1]
\nonumber \\
&
+
\frac{1}{\varepsilon^3}
\Delta_y
  [(b_1 b_2+\varepsilon b)u_1]
+
 \nabla_y\cdot\nabla_x
  [b_1 b_2 u_2]
\nonumber \\
&
+
 \nabla_x\cdot\nabla_y
  [b_1 b_2 u_2]
+
\frac{1}{\varepsilon^2}
\Delta_y
  [(b_1 b_2+\varepsilon b)u_2]
\nonumber\\
&
+
\frac{1}{\varepsilon}
\Delta_y
  [(b_1 b_2+\varepsilon b)u_3]
+
\Delta_y
  [b_1 b_2 u_4]
+o(1),
\end{align}
where we took into account the powers of $\varepsilon$ up to the order
$\varepsilon^0$.

Thus, at order $1/\varepsilon^4$, we find the equation
\begin{equation}
\label{nppr380}
\Delta_y (b_1 b_2 u_0)
=0\,.
\end{equation}
Recalling that $b_2$ does not depend on
$y$ (see \eqref{prob010}),
from \eqref{nppr380}
we have that
$b_1(x,t,y) u_0(x,t,y,\tau)$ does not depend on $y$,
thus we set
$v_0(x,t,\tau)=b_1(x,t,y) u_0(x,t,y,\tau)$.

We now consider the $1/\varepsilon^3$ order equation.
From
\eqref{prob030}, \eqref{nppr060}, and \eqref{nppr370}, we have
\begin{equation}
\label{nppr390}
\Delta_y(b u_0)
+
\Delta_y(b_1 b_2 u_1)
=0\,,
\end{equation}
which, provided we let
$v_1(x,t,y,\tau)=b_1(x,t,y) u_1(x,t,y,\tau)$, can be rewritten
as
\begin{equation}
\label{nppr400}
b_2
\Delta_y(v_1)
=
-
v_0
\Delta_y\Big(
             \frac{b}{b_1}
        \Big)
,
\end{equation}
where we have used that $b_2$ and $v_0$ do not depend on $y$.
We now look for a solution of the above equation in the factored
form
\begin{equation}
\label{nppr410}
v_1(x,t,y,\tau)=\chi_1(x,t,y,\tau)v_0(x,t,\tau)
.
\end{equation}
By plugging \eqref{nppr410} into \eqref{nppr400} and using again
that $v_0$ does not depend on $y$, we get
that $\chi_1$ has to solve the equation
\begin{equation}
\label{nppr420}
b_2
\Delta_y(\chi_1)
=
-
\Delta_y\Big(
             \frac{b}{b_1}
        \Big)
.
\end{equation}
We now consider the $1/\varepsilon^2$ order equation.
From
\eqref{prob030}, \eqref{nppr060}, and \eqref{nppr370}, we have
\begin{equation}
\label{nppr430}
\frac{\partial u_0}{\partial \tau}
-
[
     \nabla_y\cdot\nabla_x
     (b_1 b_2 u_0)
     +
     \nabla_x\cdot\nabla_y
     (b_1 b_2 u_0)
     +
     \Delta_y(b u_1)
     +
     \Delta_y(b_1 b_2 u_2)
]
=0
.
\end{equation}
Since $b_2$ and $v_0=b_1u_0$ do not depend on $y$, we get
\begin{equation}
\label{nppr440}
\frac{1}{b_1}
\frac{\partial v_0}{\partial \tau}
-
[
     \Delta_y\Big(\frac{b}{b_1} v_1\Big)
     +
     \Delta_y(b_2 v_2)
]
=0
,
\end{equation}
where we set
$v_2(x,t,y,\tau)=b_1(x,t,y) u_2(x,t,y,\tau)$.
Since the last two terms above integrate to zero on $\mathcal{Y}$ and
$v_0$ does not depend on $y$, we
have the compatibility condition
\begin{equation}
\label{nppr450}
\Big(\int_\mathcal{Y}\frac{\text{d}y}{b_1}\Big)
\frac{\partial v_0}{\partial \tau}
=0
,
\end{equation}
which implies that $v_0$ does not depend on $\tau$.
Hence, $v_0=v_0(x,t)$ and,
since $v_0=b_1 u_0$,
also $u_0$ does not depend
on $\tau$, namely $u_0=u_0(x,t,y)$.
Inserting, now, \eqref{nppr410} in \eqref{nppr440}, we get
the following equation for $v_2$:
\begin{equation}
\label{nppr460}
\Delta_y(b_2 v_2)
=
-v_0\Delta_y\Big(\frac{b}{b_1} \chi_1\Big)
.
\end{equation}
We will look for a solution of the above equation in the
factored form
\begin{equation}
\label{nppr470}
v_2(x,t,y,\tau)=\chi_2(x,t,y,\tau)v_0(x,t)
.
\end{equation}
This leads to the equation
\begin{equation}
\label{nppr480}
\Delta_y(b_2 \chi_2)
=
-\Delta_y\Big(\frac{b}{b_1} \chi_1\Big)
\end{equation}
for the unknown function $\chi_2$.

Next we consider the $1/\varepsilon$ order equation.
From
\eqref{prob030}, \eqref{nppr060}, and \eqref{nppr370} we have
\begin{equation}
\label{nppr490}
\frac{\partial u_1}{\partial \tau}
-
[
     (\nabla_x\cdot\nabla_y
     +
     \nabla_y\cdot\nabla_x)
     (b u_0+b_1 b_2 u_1)
     +
     \Delta_y(b u_2)
     +
     \Delta_y(b_1 b_2 u_3)
]
=0
.
\end{equation}
Since all the terms above but the first one
integrate to zero on $\mathcal{Y}$,
we have the compatibility condition
\begin{equation}
\label{nppr500}
\int_\mathcal{Y}
\frac{\partial u_1}{\partial \tau}
\,\text{d}y
=0
,
\end{equation}
that, recalling the definition of $v_1$ given below \eqref{nppr390} and
\eqref{nppr410}, yields the condition
\begin{equation}
\label{nppr510}
\int_\mathcal{Y}
\frac{\partial }{\partial \tau}
\Big(
     \frac{\chi_1}{b_1}
\Big)
\,\text{d}y
=0
,
\end{equation}
which completes the definition of $\chi_1$ as solution of the
equation \eqref{nppr420}.
Setting, now,
$v_3(x,t,y,\tau)=b_1(x,t,y) u_3(x,t,y,\tau)$, we can rewrite
\eqref{nppr490} as an equation for $v_3$; indeed, we find
\begin{equation}
\label{nppr520}
\Delta_y(b_2 v_3)
=
\frac{v_0}{b_1}
\frac{\partial \chi_1}{\partial \tau}
-
\Big[
     (\nabla_x\cdot\nabla_y
     +
     \nabla_y\cdot\nabla_x)
     \Big(
          \frac{b}{b_1} v_0+ b_2 \chi_1 v_0
     \Big)
     +
     v_0
     \Delta_y\Big(
                  \frac{b}{b_1} \chi_2
             \Big)
\Big]
.
\end{equation}
Then, we turn to the $\varepsilon^0$ order equation.
From
\eqref{prob030}, \eqref{nppr060}, and \eqref{nppr370}, we have
\begin{equation}
\label{nppr530}
\frac{\partial u_0}{\partial t}
+
\frac{\partial u_2}{\partial \tau}
-
\Big[
     \Delta_x(b_1 b_2 u_0)
     +
     (\nabla_y\cdot\nabla_x
      +
      \nabla_x\cdot\nabla_y)
     (b u_1 + b_1 b_2 u_2)
     +
     \Delta_y(b u_3)
     +
     \Delta_y(b_1 b_2 u_4)
\Big]
=
0\,.
\end{equation}
Since all the terms above but the first three
on the left
integrate to zero on $\mathcal{Y}$,
we have the compatibility condition
\begin{equation}
\label{nppr540}
\frac{\partial }{\partial t}
\Big(
v_0
\int_\mathcal{Y}
\frac{\text{d}y}{b_1}
\Big)
+
\int_\mathcal{Y}
\frac{\partial u_2}{\partial \tau}
\,\text{d}y
-
     \Delta_x(b_2 v_0)
=0\,,
\end{equation}
where we have used that $v_0=b_1 u_0$ and
that $v_0$ and $b_2$ do not depend on $y$.
Finally, by integrating over $\mathcal{S}$,
using the $\mathcal{Q}$-periodicity of $u_2$ in $(y,\tau)$
and the fact that both $b_1$ and $v_0$ do not depend on $\tau$,
we get for $v_0$ the equation
\begin{equation}
\label{nppr550}
\frac{\partial }{\partial t}
\Big[
\Big(
\int_\mathcal{Y}
\frac{\text{d}y}{b_1}
\Big)
v_0
\Big]
-
\Delta_x
\Big[
\Big(
\int_\mathcal{S}
b_2
\,\text{d}\tau
\Big)
v_0
\Big]
=0
,
\end{equation}
which coincides with the equation \eqref{nppr220}, found in Section~\ref{s:np-mod01}.
Hence, also in this case, equation \eqref{nppr240} is still in force.

The second case we consider here is the problem \eqref{prob030}--\eqref{prob050}
for $\alpha=2$ and $\beta=4$.
We then let $y=x/\varepsilon^2$
and
$\tau=t/\varepsilon^4$ and,
by abusing the notation, we write the differential rules
\begin{equation}
\label{nppr840}
\frac{\partial}{\partial t}=\frac{\partial}{\partial t}
+\frac{1}{\varepsilon^4}\frac{\partial}{\partial\tau},
\;\;
\nabla_x=\nabla_x+\frac{1}{\varepsilon^2}\nabla_y,
\textrm{ and }\;\;
\Delta_x=\Delta_x
+
\frac{1}{\varepsilon^2}
(
 \nabla_y\cdot\nabla_x
 +\nabla_x\cdot\nabla_y
)
+
\frac{1}{\varepsilon^4}\Delta_y
\end{equation}
and look for a solution of \eqref{prob030}, using the formal expansion
\eqref{nppr050}.
By substituting \eqref{nppr050} in \eqref{prob030}, we get
\begin{equation}
\label{nppr860}
\frac{\partial u_\varepsilon}{\partial t}
=
\frac{\partial u_0}{\partial t}
+
\frac{1}{\varepsilon^4}
\frac{\partial u_0}{\partial \tau}
+
\varepsilon
\frac{\partial u_1}{\partial t}
+
\frac{1}{\varepsilon^3}
\frac{\partial u_1}{\partial \tau}
+
\varepsilon^2
\frac{\partial u_2}{\partial t}
+
\frac{1}{\varepsilon^2}
\frac{\partial u_2}{\partial \tau}
+
\frac{1}{\varepsilon}
\frac{\partial u_3}{\partial \tau}
+
\frac{\partial u_4}{\partial \tau}
+
\varepsilon
\frac{\partial u_5}{\partial \tau}
+
\varepsilon^2
\frac{\partial u_6}{\partial \tau}
+o(\varepsilon^2)
\end{equation}
and
\eqref{nppr370}.
Thus, at order $1/\varepsilon^4$, we find the equation
\begin{equation}
\label{nppr880}
\frac{\partial u_0}{\partial\tau}-
\Delta_y (b_1 b_2 u_0)
=0
\,.
\end{equation}
Recalling that $b_2$ does not depend on
$y$,
from \eqref{nppr880}
we have that
$b_1(x,t,y) u_0(x,t,y,\tau)$ does not depend on $y$ and $\tau$,
thus we set
$v_0(x,t)=b_1(x,t,y) u_0(x,t,y,\tau)$.
Indeed, we write \eqref{nppr880} as an equation for $v_0$ (which, clearly, has uniqueness)
and note that
$v_0$, constant with respect to $\tau$ and $y$, solves such an equation.

Now, we pass directly to the $\varepsilon^0$ order equation.
From
\eqref{prob030}, \eqref{nppr860}, and \eqref{nppr370}, we have
\begin{equation}
\label{nppr830}
\frac{\partial u_0}{\partial t}
+
\frac{\partial u_4}{\partial \tau}
-
\Big[
     \Delta_x(b_1 b_2 u_0)
     +
     (\nabla_y\cdot\nabla_x
      +
      \nabla_x\cdot\nabla_y)
     (b u_1 + b_1 b_2 u_2)
     +
     \Delta_y(b u_3)
     +
     \Delta_y(b_1 b_2 u_4)
\Big]
=
0\,.
\end{equation}
Integrating on
$\mathcal{Q}$,
we find again \eqref{nppr550} and, with the same arguments as those
used above, we derive \eqref{nppr240}.

The last situation we discuss in this subsection
is the problem \eqref{prob030}--\eqref{prob050}
for $\alpha=2$ and $\beta=1$.
We, then, let $y=x/\varepsilon^2$
and
$\tau=t/\varepsilon$ and,
by abusing the notation, we write the differential rules
\begin{equation}
\label{nppr940}
\frac{\partial}{\partial t}=\frac{\partial}{\partial t}
+\frac{1}{\varepsilon}\frac{\partial}{\partial\tau},
\;\;
\nabla_x=\nabla_x+\frac{1}{\varepsilon^2}\nabla_y,
\textrm{ and }\;\;
\Delta_x=\Delta_x
+
\frac{1}{\varepsilon^2}
(
 \nabla_y\cdot\nabla_x
 +\nabla_x\cdot\nabla_y
)
+
\frac{1}{\varepsilon^4}\Delta_y\,,
\end{equation}
and we look for a solution of \eqref{prob030}, using the formal expansion
\eqref{nppr050}.
By substituting \eqref{nppr050} in \eqref{prob030}, we get
\begin{equation}
\label{nppr960}
\frac{\partial u_\varepsilon}{\partial t}
=
\frac{\partial u_0}{\partial t}
+
\frac{1}{\varepsilon}
\frac{\partial u_0}{\partial \tau}
+
\varepsilon
\frac{\partial u_1}{\partial t}
+
\frac{\partial u_1}{\partial \tau}
+
\varepsilon^2
\frac{\partial u_2}{\partial t}
+
\varepsilon
\frac{\partial u_2}{\partial \tau}
+
\varepsilon^2
\frac{\partial u_3}{\partial \tau}
+o(\varepsilon^2)
\end{equation}
and
\eqref{nppr370}.

Thus, at order $1/\varepsilon^4$, we find again the equation
\eqref{nppr380}, which leads, as above, to
$v_0(x,t,\tau)=b_1(x,t,y) u_0(x,t,y,\tau)$.

We now consider the $1/\varepsilon$ order equation.
From
\eqref{prob030}, \eqref{nppr960}, and \eqref{nppr370}, we have
\begin{equation}
\label{nppr990}
\frac{\partial u_0}{\partial \tau}
-
[
     (\nabla_x\cdot\nabla_y
     +
     \nabla_y\cdot\nabla_x)
     (b u_0+b_1 b_2 u_1)
     +
     \Delta_y(b u_2)
     +
     \Delta_y(b_1 b_2 u_3)
]
=0
.
\end{equation}
By integrating on $\mathcal{Y}$ and using that $v_0$ does not depend on $y$
and $b_1$ does not depend on $\tau$,
we arrive again to the compatibility condition \eqref{nppr450},
which implies that $v_0=v_0(x,t)$.

We finally consider the $\varepsilon^0$ order equation.
From
\eqref{prob030}, \eqref{nppr960}, and \eqref{nppr370}, we have
\begin{equation}
\label{nppr1030}
\frac{\partial u_0}{\partial t}
+
\frac{\partial u_1}{\partial \tau}
-
\Big[
     \Delta_x(b_1 b_2 u_0)
     +
     (\nabla_y\cdot\nabla_x
      +
      \nabla_x\cdot\nabla_y)
     (b u_1 + b_1 b_2 u_2)
     +
     \Delta_y(b u_3)
     +
     \Delta_y(b_1 b_2 u_4)
\Big]
=
0
.
\end{equation}
Integrating on
$\mathcal{Q}$,
we get once again \eqref{nppr550} and, with the same arguments as those
used above, we derive \eqref{nppr240}.

\section{Preliminary results}
\label{s:pre}
\par\noindent
In this Section, we always assume that $\alpha\leq 1$ and $v_{\varepsilon}$ is the solution to \eqref{fas023}, under the assumptions listed in Subsection~\ref{s:p-mod}.

\subsection{Estimates}
\label{s:estim}
\par\noindent
We collect here some estimates that will be used in the sequel.

\begin{lemma}
\label{t:enest}
There exists $\const>0$, depending on $T, \|f\|_2, \|\bar v_\varepsilon\|_2$ and the structural constants of the problem,
but independent of $\varepsilon$, such that
\begin{equation}
\label{fas100}
\sup_{t\in[0,T]}
\int_\Omega v_\varepsilon^2\,\emph{d}x
+
\int_0^T\int_\Omega |\nabla v_\varepsilon|^2\,\emph{d}x\,\emph{d}t
\le
\gamma\,.
\end{equation}
\end{lemma}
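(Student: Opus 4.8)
The plan is to derive a parabolic energy estimate for $v_\varepsilon$ directly from \eqref{fas000}, working from the strong form (legitimate since $v_\varepsilon\in L^2(0,T;H^2(\Omega))\cap H^1(0,T;L^2(\Omega))$), with one essential modification dictated by the fast time oscillation. The capacity factor $a_2^\varepsilon$ oscillates in $\tau=t/\varepsilon^\beta$, so $\partial_t a_2^\varepsilon$ is of order $\varepsilon^{-\beta}$; testing with $v_\varepsilon$ or with the flux argument and integrating the capacity term by parts in time would therefore produce an uncontrollable factor $\partial_t a_2^\varepsilon$. To avoid this I would multiply \eqref{fas000} by $v_\varepsilon/a_2^\varepsilon$ and integrate over $\Omega\times(0,s)$ for arbitrary $s\in[0,T]$. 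The factor $a_2^\varepsilon$ then cancels \emph{algebraically} in the capacitive term, which becomes $\int_\Omega a_1^\varepsilon\,\partial_t(v_\varepsilon/b_1^\varepsilon)\,v_\varepsilon\,\mathrm dx$: the only surviving coefficients $a_1^\varepsilon$ and $b_1^\varepsilon$ do not oscillate in the fast time variable $\tau$, so their $t$-derivatives are bounded uniformly in $\varepsilon$ by the Lipschitz assumptions.

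For the capacitive term I would write $\partial_t(v_\varepsilon/b_1^\varepsilon)=\partial_t v_\varepsilon/b_1^\varepsilon-v_\varepsilon\partial_t b_1^\varepsilon/(b_1^\varepsilon)^2$, so that it equals $\tfrac12\frac{\mathrm d}{\mathrm dt}\int_\Omega(a_1^\varepsilon/b_1^\varepsilon)v_\varepsilon^2\,\mathrm dx$ plus two remainders pointwise bounded by $Cv_\varepsilon^2$, since $\partial_t(a_1^\varepsilon/b_1^\varepsilon)$ and $a_1^\varepsilon\partial_t b_1^\varepsilon$ are bounded uniformly in $\varepsilon$. By \eqref{ppro00-500} the weight $a_1^\varepsilon/b_1^\varepsilon$ lies between two positive constants, so $\int_\Omega(a_1^\varepsilon/b_1^\varepsilon)v_\varepsilon^2$ is comparable to $\|v_\varepsilon(\cdot,t)\|_2^2$; this is what will ultimately deliver the $\sup_t$ control.

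For the diffusion term, integrating the divergence by parts (admissible because $v_\varepsilon/a_2^\varepsilon\in H^1_0(\Omega)$ for a.e.\ $t$, as $1/a_2^\varepsilon$ is Lipschitz in $x$) yields $\int_\Omega B^\varepsilon\nabla(\tilde c^\varepsilon v_\varepsilon)\cdot\nabla(v_\varepsilon/a_2^\varepsilon)$ with $\tilde c^\varepsilon=b_2^\varepsilon+\varepsilon b^\varepsilon/b_1^\varepsilon$. Its leading part $\int_\Omega(\tilde c^\varepsilon/a_2^\varepsilon)\,B^\varepsilon\nabla v_\varepsilon\cdot\nabla v_\varepsilon$ is coercive, bounded below by $C^{-1}\int_\Omega|\nabla v_\varepsilon|^2$ via \eqref{ppro00-000} and the fact that $\tilde c^\varepsilon/a_2^\varepsilon$ is bounded below (note $\tilde c^\varepsilon\ge C^{-1}$ by \eqref{ppro00-500}). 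The remaining terms all carry a factor $v_\varepsilon$ times one of $\nabla\tilde c^\varepsilon$ or $\nabla a_2^\varepsilon$, and the crucial observation is that these gradients are bounded uniformly in $\varepsilon$: since $a_2^\varepsilon$ and $b_2^\varepsilon$ carry no fast spatial variable, $\nabla a_2^\varepsilon$ and $\nabla b_2^\varepsilon$ stay bounded, while the perturbation contributes $\varepsilon\nabla(b^\varepsilon/b_1^\varepsilon)=O(\varepsilon^{1-\alpha})$, which is bounded precisely because $\alpha\le1$. Young's inequality then absorbs these terms as $\tfrac12 C^{-1}\int_\Omega|\nabla v_\varepsilon|^2+C\int_\Omega v_\varepsilon^2$.

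Finally I would bound the source term by $|\int_\Omega f v_\varepsilon/a_2^\varepsilon|\le C(\|f\|_2^2+\|v_\varepsilon(\cdot,t)\|_2^2)$, collect all contributions, and integrate in $t\in(0,s)$ to obtain, for every $s\in[0,T]$, an inequality of the form $\|v_\varepsilon(\cdot,s)\|_2^2+\int_0^s\|\nabla v_\varepsilon\|_2^2\,\mathrm dt\le C\|\bar v_\varepsilon\|_2^2+C\|f\|_2^2+C\int_0^s\|v_\varepsilon(\cdot,t)\|_2^2\,\mathrm dt$; Grönwall's lemma bounds $\sup_{[0,T]}\|v_\varepsilon\|_2^2$, and reinserting that bound controls the gradient integral, giving \eqref{fas100} with $\gamma$ depending only on $T$, $\|f\|_2$, $\|\bar v_\varepsilon\|_2$ (itself bounded by \eqref{fas022}) and the structural constants. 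The main obstacle is the very first step: the fast time oscillation of $a_2^\varepsilon$ forces the weight $1/a_2^\varepsilon$ in the test function so that the coefficient differentiated in time is the non-oscillating $a_1^\varepsilon/b_1^\varepsilon$, and the hypothesis $\alpha\le1$ enters only to keep the spatial gradient of the $\varepsilon$-perturbation bounded.
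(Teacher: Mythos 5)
Your proposal is correct and follows essentially the same route as the paper's own proof: multiplying \eqref{fas000} by $v_\varepsilon/a_2^\varepsilon$ so that the fast-time-oscillating factor $a_2^\varepsilon$ cancels algebraically in the capacitive term, extracting the exact derivative $\tfrac12\frac{\mathrm{d}}{\mathrm{d}t}\int_\Omega (a_1^\varepsilon/b_1^\varepsilon)v_\varepsilon^2\,\mathrm{d}x$, using $\alpha\le1$ to keep $\varepsilon\nabla(b^\varepsilon/b_1^\varepsilon)$ bounded, and closing with Young's inequality, absorption of the gradient term, and Gronwall. Your identification of the two key points --- the weight $1/a_2^\varepsilon$ to avoid differentiating the $\tau$-oscillating coefficient, and the role of $\alpha\le1$ --- matches exactly where the paper's argument \eqref{fas120}--\eqref{fas130} relies on them.
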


\par\noindent
\textit{Proof.}
Multiplying \eqref{fas000} by $v_\varepsilon/a^\varepsilon_2$
and integrating by parts, we obtain
\begin{equation}
\label{fas120}
\begin{split}
&
\frac{1}{2}
\int_\Omega v_\varepsilon^2\frac{a^\varepsilon_1}{b^\varepsilon_1} \mathrm{d}x
+
\int_0^t\int_\Omega
\frac{1}{a^2_\varepsilon}
\Big(b^\varepsilon_2+\varepsilon\frac{b^\varepsilon}{b^\varepsilon_1}\Big)
B^\varepsilon
\nabla v_\varepsilon\cdot\nabla v_\varepsilon
\,\mathrm{d}x\,\mathrm{d}s
\\
&
\quad
=
\int_0^t\int_\Omega
\Big[
     f\frac{v_\varepsilon}{a^\varepsilon_2}
     +\frac{1}{2}v_\varepsilon^2
      \Big\{
         \frac{1}{b^\varepsilon_1}\frac{\partial a^\varepsilon_1}{\partial s}
         -
         a^\varepsilon_1\frac{\partial}{\partial s}\frac{1}{b^\varepsilon_1}
      \Big\}
\Big]
\,\mathrm{d}x\,\mathrm{d}s
\\
&
\qquad
-
\int_0^t\int_\Omega
v_\varepsilon
\Big[
 \Big(b^\varepsilon_2+\varepsilon\frac{b^\varepsilon}{b^\varepsilon_1}\Big)
 B^\varepsilon
 \nabla v_\varepsilon\cdot\nabla\frac{1}{a^\varepsilon_2}
+
 \frac{1}{a^\varepsilon_2}
 B^\varepsilon
 \nabla v_\varepsilon\cdot
 \nabla
 \Big(b^\varepsilon_2+\varepsilon\frac{b^\varepsilon}{b^\varepsilon_1}\Big)
\Big]
\,\mathrm{d}x\,\mathrm{d}s
\\
&
\qquad
-
\int_0^t\int_\Omega
 v_\varepsilon^2
 B^\varepsilon
 \nabla\frac{1}{a^\varepsilon_2}\cdot
 \nabla
 \Big(b^\varepsilon_2+\varepsilon\frac{b^\varepsilon}{b^\varepsilon_1}\Big)
\,\mathrm{d}x\,\mathrm{d}s
+
\frac{1}{2}
\int_\Omega
\Big[
     v^2_\varepsilon
     \frac{a^\varepsilon_1}{b^\varepsilon_1}
\Big]_{s=0}
\,\mathrm{d}x
\,.
\end{split}
\end{equation}
Under our assumptions on the sign of the coefficients,
the left hand side of \eqref{fas120} can be bounded from
below by the left hand side of \eqref{fas100}.
Again appealing to our assumptions and, in particular,
to $\alpha\le1$, we see that all the functions
appearing in the integrals on the right hand side
of \eqref{fas120} are bounded by an absolute constant,
with the exception of $f$, $v^\varepsilon$, and $\nabla v^\varepsilon$.

Then, by Young inequality, the right hand side of \eqref{fas120} can be bounded
from above by
\begin{equation}
\label{fas130}
\gamma
\left(\|f\|_2^2
+
\delta
\int_0^t\int_\Omega
|\nabla v_\varepsilon|^2
\,\mathrm{d}x\,\mathrm{d}s
+
\frac{1}{\delta}
\int_0^t\int_\Omega
v_\varepsilon^2
\,\mathrm{d}x\,\mathrm{d}s
+
\|\bar v_\varepsilon\|_2^2\right)
\,,
\end{equation}
where $\const$ is independent of $\varepsilon$ and  $\delta>0$ can be chosen so that the gradient term can be
absorbed into the left hand side.
Finally, the result follows from the application of Gronwall lemma.
\qed

Taking into account that the initial datum $\bar u_\varepsilon$ (and, therefore, $\bar v_\varepsilon$)
belongs not only to the space $L^2(\Omega)$ (as needed in the previous estimate), but it is, indeed, in $H^1_0(\Omega_T)$,
we can obtain also some estimates for the time-derivative of the solution $v_\varepsilon$, as stated in the
next two lemmas.

\begin{lemma}
\label{t:vt}
There exists $\const>0$, depending on $T, \|f\|_2, \|\bar v_\varepsilon\|_2$ and the structural constants of the problem,
but independent of $\varepsilon$, such that
\begin{equation}
\label{fas140}
\int_0^T \int_\Omega
\Big(\frac{\partial v_\varepsilon}{\partial t}\Big)^2
\,\emph{d}x\,\emph{d}t
+
\sup_{t\in[0,T]}
\int_\Omega |\nabla v_\varepsilon|^2
\,\emph{d}x
\le
\frac{\gamma}{\varepsilon^\beta}
+\gamma
\int_\Omega
|\nabla \bar v_\varepsilon|^2
\,\emph{d}x\,.
\end{equation}
\end{lemma}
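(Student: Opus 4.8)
The plan is to obtain \eqref{fas140} as an a priori energy estimate, derived by multiplying the strong form \eqref{fas000} of the problem by the time derivative $\partial_t v_\varepsilon$ and integrating over $\Omega\times(0,s)$ for $s\in[0,T]$ (the manipulations are formal and, for each fixed $\varepsilon$, justified by the parabolic regularity of $v_\varepsilon$ recalled after \eqref{gpro080}, e.g.\ via time difference quotients; note that $\partial_t v_\varepsilon$ is not admissible in the weak formulation \eqref{fas023}, so one argues on the equation itself). Throughout I abbreviate $P^\varepsilon:=b_2^\varepsilon+\varepsilon b^\varepsilon/b_1^\varepsilon$ and write the parabolic term of \eqref{fas000} as $a_1^\varepsilon a_2^\varepsilon\,\partial_t u_\varepsilon$ with $u_\varepsilon=v_\varepsilon/b_1^\varepsilon$. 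A preliminary remark, which is precisely where the standing hypothesis $\alpha\le1$ is used, is that $\nabla P^\varepsilon$ is bounded uniformly in $\varepsilon$: indeed $b_2$ does not depend on $y$, so $\nabla b_2^\varepsilon$ is bounded, while differentiating $\varepsilon b^\varepsilon/b_1^\varepsilon$ produces at worst a factor $\varepsilon^{1-\alpha}\le1$. By contrast, time derivatives of the fast--in--time coefficients $b_2^\varepsilon,b^\varepsilon,B^\varepsilon$ scale like $\varepsilon^{-\beta}$, and this is the origin of the term $\gamma\varepsilon^{-\beta}$ in \eqref{fas140}.

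\textbf{Parabolic and principal diffusion terms.} The parabolic term yields $\int a_1^\varepsilon a_2^\varepsilon\,\partial_t u_\varepsilon\,\partial_t v_\varepsilon$; writing $\partial_t u_\varepsilon=(1/b_1^\varepsilon)\partial_t v_\varepsilon-(v_\varepsilon/(b_1^\varepsilon)^2)\partial_t b_1^\varepsilon$ and using that $\partial_t b_1^\varepsilon$ is bounded (since $b_1$ carries no fast time variable), the leading part is coercive, $\ge C^{-1}\int(\partial_t v_\varepsilon)^2$, and the remainder is absorbed by Young's inequality against $\int v_\varepsilon^2$, controlled by Lemma~\ref{t:enest}. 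Integrating the diffusion term by parts in space and inserting $\nabla(P^\varepsilon v_\varepsilon)=P^\varepsilon\nabla v_\varepsilon+v_\varepsilon\nabla P^\varepsilon$ splits it into a principal part $I_1=\int_0^s\int P^\varepsilon B^\varepsilon\nabla v_\varepsilon\cdot\partial_t\nabla v_\varepsilon$ and a cross part $I_2=\int_0^s\int v_\varepsilon B^\varepsilon\nabla P^\varepsilon\cdot\partial_t\nabla v_\varepsilon$. In $I_1$ I would use the symmetry of $B^\varepsilon$ to rewrite the integrand as $\tfrac12 P^\varepsilon\partial_t(B^\varepsilon\nabla v_\varepsilon\cdot\nabla v_\varepsilon)$ minus lower order, then integrate by parts in time: the boundary term at $t=s$ furnishes, by ellipticity \eqref{ppro00-000}, the coercive quantity $C^{-1}\int|\nabla v_\varepsilon(s)|^2$ (hence the $\sup_t$ term of \eqref{fas140}), the boundary term at $t=0$ produces $\gamma\int|\nabla\bar v_\varepsilon|^2$, and the two terms carrying $\partial_t P^\varepsilon$ and $\partial_t B^\varepsilon$ are each bounded by $C\varepsilon^{-\beta}\int_0^s\int|\nabla v_\varepsilon|^2\le\gamma\varepsilon^{-\beta}$ directly from Lemma~\ref{t:enest}.

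\textbf{Main obstacle: the cross part.} The delicate point is $I_2$ and the fast--time contributions it generates. Integrating $I_2$ by parts in time, its boundary terms are treated as above (a $\delta$--fraction of $\int|\nabla v_\varepsilon|^2$ absorbed on the left, the rest controlled by $\int v_\varepsilon^2$ and $\int|\nabla\bar v_\varepsilon|^2$), while the bulk term $-\int_0^s\int\partial_t(v_\varepsilon B^\varepsilon\nabla P^\varepsilon)\cdot\nabla v_\varepsilon$ contains the factors $\partial_t B^\varepsilon$ and $\partial_t\nabla P^\varepsilon$, both of size $\varepsilon^{-\beta}$ (for $\partial_t\nabla P^\varepsilon$ one checks, using $\alpha\le1$, that the worst scale is $\varepsilon^{1-\alpha-\beta}\le\varepsilon^{-\beta}$). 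Here a pointwise Young inequality would couple $\varepsilon^{-\beta}$ with $v_\varepsilon^2$ and produce the wrong power $\varepsilon^{-2\beta}$; the key is instead the space--time Cauchy--Schwarz inequality, $\varepsilon^{-\beta}\int_0^s\int|v_\varepsilon|\,|\nabla v_\varepsilon|\le\varepsilon^{-\beta}\|v_\varepsilon\|_2\,\|\nabla v_\varepsilon\|_2\le\gamma\varepsilon^{-\beta}$, since both factors are bounded by Lemma~\ref{t:enest}. The residual term $B^\varepsilon\partial_t v_\varepsilon\,\nabla P^\varepsilon\cdot\nabla v_\varepsilon$ is split by Young into an absorbable $\delta(\partial_t v_\varepsilon)^2$ and a harmless $\int|\nabla v_\varepsilon|^2$, and the source term $\int_0^s\int f\,\partial_t v_\varepsilon$ into $\delta\int(\partial_t v_\varepsilon)^2+C_\delta\|f\|_2^2$.

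\textbf{Conclusion.} Collecting all contributions, choosing $\delta$ small to absorb the $\int(\partial_t v_\varepsilon)^2$ and $\sup_t\int|\nabla v_\varepsilon|^2$ terms into the left-hand side, and invoking \eqref{fas022} and Lemma~\ref{t:enest}, gives \eqref{fas140}. I expect the only genuinely delicate bookkeeping to be keeping every fast--time factor at the level $\varepsilon^{-\beta}$ rather than $\varepsilon^{-2\beta}$, which hinges jointly on the $\alpha\le1$ boundedness of $\nabla P^\varepsilon$ and on the systematic use of the space--time Cauchy--Schwarz inequality in the cross terms.
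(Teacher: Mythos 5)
Your proposal is correct and follows essentially the same route as the paper's proof of Lemma~\ref{t:vt}: multiplication of \eqref{fas000} by $\partial v_\varepsilon/\partial t$, integration by parts in space and then in time on the cross term, the same decomposition into a coercive left-hand side plus the terms $I_1,\dots,I_5$ of \eqref{fas150}, the same use of $\alpha\le1$ to bound $\nabla\big(b_2^\varepsilon+\varepsilon b^\varepsilon/b_1^\varepsilon\big)$ uniformly, and Lemma~\ref{t:enest} together with \eqref{fas022} to close the estimate after absorbing the $\delta$-terms. The only divergence is cosmetic: for the $\varepsilon^{-\beta}$ cross contributions you use space--time Cauchy--Schwarz and assert that pointwise Young would produce the wrong power $\varepsilon^{-2\beta}$, but the paper in fact applies the balanced pointwise Young inequality $\varepsilon^{-\beta}\,|v_\varepsilon|\,|\nabla v_\varepsilon|\le\tfrac12\,\varepsilon^{-\beta}\big(v_\varepsilon^2+|\nabla v_\varepsilon|^2\big)$ in \eqref{fas190}, which yields the same $\gamma\,\varepsilon^{-\beta}$ bound via Lemma~\ref{t:enest}, so that remark is inaccurate but immaterial to the validity of your argument.
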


\par\noindent
\textit{Proof.}
Let us multiply \eqref{fas000} times $\partial v_\varepsilon/\partial t$
and integrate by parts to obtain
\begin{equation}
\label{fas150}
\begin{split}
&
\int_0^t\int_\Omega
\frac{a^\varepsilon_1 a^\varepsilon_2}{b^\varepsilon_1}
\Big(\frac{\partial v_\varepsilon}{\partial s}\Big)^2
\,\mathrm{d}x\,\mathrm{d}s
+
\frac{1}{2}
\int_\Omega
 \Big(b^\varepsilon_2+\varepsilon\frac{b^\varepsilon}{b^\varepsilon_1}\Big)
 B^\varepsilon
 \nabla v_\varepsilon
 \cdot
 \nabla v_\varepsilon
\,\mathrm{d}x
\\
&
\quad
=
 -
\int_0^t\int_\Omega
 a^\varepsilon_1 a^\varepsilon_2
 v_\varepsilon
 \Big(\frac{\partial v_\varepsilon}{\partial s}\Big)
 \frac{\partial}{\partial s}\frac{1}{b^\varepsilon_1}
 \,\mathrm{d}x\,\mathrm{d}s
+
\frac{1}{2}
\int_0^t\int_\Omega
\frac{\partial}{\partial s}
\Big[
    \Big(b^\varepsilon_2+\varepsilon\frac{b^\varepsilon}{b^\varepsilon_1}\Big)
    B^\varepsilon
\Big]
 \nabla v_\varepsilon
 \cdot
 \nabla v_\varepsilon
 \,\mathrm{d}x\,\mathrm{d}s
\\
&
\qquad
-
\int_0^t\int_\Omega
 v_\varepsilon
 B^\varepsilon
 \nabla
 \Big(b^\varepsilon_2+\varepsilon\frac{b^\varepsilon}{b^\varepsilon_1}\Big)
 \cdot
 \nabla
 \frac{\partial v_\varepsilon}{\partial s}
 \,\mathrm{d}x\,\mathrm{d}s
+
\int_0^t\int_\Omega
 f
 \frac{\partial v_\varepsilon}{\partial s}
 \,\mathrm{d}x\,\mathrm{d}s
\\
&
\qquad
+
\frac{1}{2}
\int_\Omega
\Big[
   \Big(b^\varepsilon_2+\varepsilon\frac{b^\varepsilon}{b^\varepsilon_1}\Big)
   B^\varepsilon
   \nabla v_\varepsilon
   \cdot
   \nabla v_\varepsilon
\Big]_{s=0}
 \,\mathrm{d}x
\\
&
\quad
=
I_1+I_2+I_3+I_4+I_5
\;.
\end{split}
\end{equation}
Under our assumptions on the sign of the coefficients,
the left hand side of \eqref{fas150} can be bounded from
below by the left hand side of \eqref{fas140}.
Next, we give estimates for each term $I_i$.
By Young inequality, we get
\begin{gather}
\label{fas160}
|I_1|
\le
\delta
\int_0^T\int_\Omega
\Big(\frac{\partial v_\varepsilon}{\partial t}\Big)^2
 \,\mathrm{d}x \,\mathrm{d}t
+
\frac{\gamma}{\delta}
\int_0^T\int_\Omega
 v_\varepsilon^2
 \,\mathrm{d}x \,\mathrm{d}t
\\
|I_2|
\le
\frac{\gamma}{\varepsilon^\beta}
\int_0^T\int_\Omega
 |\nabla v_\varepsilon|^2
 \,\mathrm{d}x\,\mathrm{d}t
\\
|I_4|
\le
\delta
\int_0^T\int_\Omega
\Big(\frac{\partial v_\varepsilon}{\partial t}\Big)^2
 \,\mathrm{d}x \,\mathrm{d}t
+
\frac{\gamma}{\delta}
\int_0^T\int_\Omega
 f^2
 \,\mathrm{d}x \,\mathrm{d}t
\\
|I_5|
\le
\gamma
\int_\Omega
|\nabla\bar v_\varepsilon|^2
 \,\mathrm{d}x
.
\end{gather}
Moreover, we calculate
\begin{equation}
\label{fas170}
I_3
=
-\int_\Omega
\Big[
 v_\varepsilon
 B^\varepsilon
 \nabla
 \Big(b^\varepsilon_2+\varepsilon\frac{b^\varepsilon}{b^\varepsilon_1}\Big)
 \cdot
 \nabla
 v_\varepsilon
\Big]_0^t
 \,\mathrm{d}x
+
\int_0^t\int_\Omega
\frac{\partial}{\partial s}
\Big[
 v_\varepsilon
 B^\varepsilon
 \nabla
 \Big(b^\varepsilon_2+\varepsilon\frac{b^\varepsilon}{b^\varepsilon_1}\Big)
\Big]
 \cdot
 \nabla
 v_\varepsilon
 \,\mathrm{d}x \,\mathrm{d}s
\end{equation}
and thus, recalling that $\alpha\le1$, we obtain
\begin{equation}
\label{fas180}
\begin{split}
|I_3|
&
\le
\gamma
\int_\Omega
 |\nabla v_\varepsilon|
 |v_\varepsilon|
 \,\mathrm{d}x
+
\gamma
\int_\Omega
 |\nabla\bar v_\varepsilon|
 |\bar v_\varepsilon|
 \,\mathrm{d}x
\\
&
\quad
+
\frac{\gamma}{\varepsilon^\beta}
\int_0^T \int_\Omega
 |\nabla v_\varepsilon|
 |v_\varepsilon|
 \,\mathrm{d}x \,\mathrm{d}t
+
\gamma
\int_0^T \int_\Omega
 \Big|\nabla v_\varepsilon\Big|
 \Big|\frac{\partial v_\varepsilon}{\partial t}\Big|
 \,\mathrm{d}x \,\mathrm{d}t
\;.
\end{split}
\end{equation}
Again, an application of Young inequality
gives
\begin{equation}
\label{fas190}
\begin{split}
|I_3|
&
\le
\delta
 \int_\Omega
 |\nabla v_\varepsilon|^2
 \,\mathrm{d}x
+
\frac{\gamma}{\delta}
 \int_\Omega
 v_\varepsilon^2
 \,\mathrm{d}x
+\gamma
 \int_\Omega
 (\bar v_\varepsilon^2
 +
 |\nabla \bar v_\varepsilon|^2)
 \,\mathrm{d}x
\\
&
\quad
+
\frac{\gamma}{\varepsilon^\beta}
\int_0^T\int_\Omega
 (v_\varepsilon^2
 +
 |\nabla v_\varepsilon|^2)
 \,\mathrm{d}x \,\mathrm{d}t
+
\delta
\int_0^T\int_\Omega
 \Big(\frac{\partial v_\varepsilon}{\partial t}\Big)^2
 \,\mathrm{d}x \,\mathrm{d}t
+
\frac{\gamma}{\delta}
\int_0^T\int_\Omega
 |\nabla v_\varepsilon|^2
 \,\mathrm{d}x \,\mathrm{d}t
\,.
\end{split}
\end{equation}
For $\delta$ suitably small, we can absorb the terms in
\eqref{fas190} multiplied by $\delta$ into
the left hand side of \eqref{fas150}.
Then, the claim follows by applying Lemma~\ref{t:enest}.
\qed

\begin{lemma}
\label{t:vtl}
Let $T_1\in (0, T)$. Then, there exists $\const>0$, depending on
$T_1,T,\|f\|_2, \|\bar v_\varepsilon\|_2$ and the structural constants of the problem,
but independent of $\varepsilon$, such that
\begin{equation}
\label{fas340}
\int_{T_1}^T \int_\Omega
\Big(\frac{\partial v_\varepsilon}{\partial t}\Big)^2
\,\emph{d}x\,\emph{d}t
+
\sup_{t\in[T_1,T]}
\int_\Omega |\nabla v_\varepsilon|^2
\,\emph{d}x
\le
\frac{\gamma}{\varepsilon^\beta}\,.
\end{equation}
\end{lemma}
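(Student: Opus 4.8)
The plan is to eliminate from the estimate \eqref{fas140} the term $\gamma\int_\Omega|\nabla\bar v_\varepsilon|^2\,\mathrm{d}x$, which by the remark following \eqref{fas022} is only $O(\varepsilon^{-2\alpha})$ and hence may dominate $\gamma/\varepsilon^\beta$ when $\beta<2\alpha$. The idea is to rerun the energy computation of Lemma~\ref{t:vt}, but starting from a time $t^\ast$ bounded away from $0$ and chosen so that $\nabla v_\varepsilon(\cdot,t^\ast)$ is controlled by an $\varepsilon$-independent constant; the harmful boundary term at the initial time then carries $\|\nabla v_\varepsilon(t^\ast)\|_2$ in place of $\|\nabla\bar v_\varepsilon\|_2$.

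First I would locate such a $t^\ast$. By Lemma~\ref{t:enest} the bound $\int_{T_1/2}^{T_1}\big(\int_\Omega|\nabla v_\varepsilon|^2\,\mathrm{d}x\big)\,\mathrm{d}t\le\gamma$ holds with $\gamma$ independent of $\varepsilon$. Since $\nabla v_\varepsilon\in L^\infty(0,T;L^2(\Omega))$ for each fixed $\varepsilon$ by Lemma~\ref{t:vt}, the function $g(t)=\int_\Omega|\nabla v_\varepsilon(x,t)|^2\,\mathrm{d}x$ is bounded and measurable, and its average over $[T_1/2,T_1]$ is at most $2\gamma/T_1$; hence there is a time $t^\ast=t^\ast(\varepsilon)\in[T_1/2,T_1]$ (a Lebesgue point, say) with
\begin{equation*}
\int_\Omega|\nabla v_\varepsilon(x,t^\ast)|^2\,\mathrm{d}x\le\frac{2\gamma}{T_1}\,.
\end{equation*}
Together with $\sup_{[0,T]}\int_\Omega v_\varepsilon^2\,\mathrm{d}x\le\gamma$ from \eqref{fas100}, this controls $v_\varepsilon(\cdot,t^\ast)$ in $H^1(\Omega)$ by a constant depending only on $T_1$ and the data, and \emph{not} on $\varepsilon$.

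Next I would repeat the proof of Lemma~\ref{t:vt}, multiplying \eqref{fas000} by $\partial v_\varepsilon/\partial t$ and integrating over $\Omega\times(t^\ast,t)$ rather than $\Omega\times(0,t)$. The decomposition \eqref{fas150} into $I_1+\dots+I_5$ is unchanged; the only terms affected by the new lower limit are the boundary contributions evaluated at the initial time, namely $I_5$ and the initial part of the integration by parts \eqref{fas170} giving $I_3$. Evaluated at $s=t^\ast$ rather than $s=0$, these are bounded, using the first step, by
\begin{equation*}
\gamma\int_\Omega|\nabla v_\varepsilon(x,t^\ast)|^2\,\mathrm{d}x+\gamma\int_\Omega v_\varepsilon^2(x,t^\ast)\,\mathrm{d}x\le\frac{\gamma}{T_1}\,,
\end{equation*}
with no gradient of the initial datum appearing. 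The bounds \eqref{fas160}, \eqref{fas180}, \eqref{fas190} for $I_1,\dots,I_4$ and for the running part of $I_3$, the absorption of the $\delta$-terms into the left-hand side, and the final appeal to Lemma~\ref{t:enest} are all reproduced verbatim. This gives
\begin{equation*}
\int_{t^\ast}^T\!\!\int_\Omega\Big(\frac{\partial v_\varepsilon}{\partial t}\Big)^2\,\mathrm{d}x\,\mathrm{d}t+\sup_{t\in[t^\ast,T]}\int_\Omega|\nabla v_\varepsilon|^2\,\mathrm{d}x\le\frac{\gamma}{\varepsilon^\beta}+\frac{\gamma}{T_1}\,.
\end{equation*}
Since $t^\ast\le T_1$, the interval $[T_1,T]$ lies in $[t^\ast,T]$, so the left-hand side of \eqref{fas340} is dominated by the quantity above; for $\varepsilon$ small enough that $\varepsilon^\beta\le T_1$ one has $\gamma/T_1\le\gamma/\varepsilon^\beta$, and the two terms coalesce into a single $\gamma/\varepsilon^\beta$, proving \eqref{fas340}.

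The main obstacle is the first step: extracting from the purely time-integrated gradient bound of Lemma~\ref{t:enest} a single time slice $t^\ast$ at which $\|\nabla v_\varepsilon(t^\ast)\|_2$ is bounded independently of $\varepsilon$. This is exactly what allows the initial-time boundary term to be estimated by the $\varepsilon$-independent constant $\gamma/T_1$ instead of $\gamma\|\nabla\bar v_\varepsilon\|_2^2$; once this slice is in hand, the remainder is a transcription of Lemma~\ref{t:vt}.
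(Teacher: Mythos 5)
Your proof is correct, but it follows a genuinely different route from the paper. The paper never selects a good time slice: it multiplies \eqref{fas000} by $\phi(t)\,\partial v_\varepsilon/\partial t$, where $\phi$ is a smooth temporal cutoff with $\phi=0$ for $t\le T_1/2$, $\phi=1$ for $t>T_1$ and $0\le\phi'\le 4/T_1$. The boundary term at $s=0$ (your $I_5$, and the initial part of \eqref{fas170}) then vanishes identically, and the price is instead paid in the running integrals: wherever $\partial/\partial s$ falls on $\phi$, one gets terms of size $T_1^{-1}$ multiplying $|\nabla v_\varepsilon|^2$ or $|\nabla v_\varepsilon||v_\varepsilon|$, which are absorbed into the bound $\gamma(1+T_1^{-1})\varepsilon^{-\beta}\Vert\nabla v_\varepsilon\Vert_2^2$ via Lemma~\ref{t:enest}, exactly as for the original $I_2$ and $I_3$. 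Your mean-value selection of $t^\ast\in[T_1/2,T_1]$ with $\Vert\nabla v_\varepsilon(t^\ast)\Vert_2^2\le 2\gamma/T_1$ achieves the same end, and it is legitimate here: for fixed $\varepsilon$ one has $v_\varepsilon\in L^2(0,T;H^2(\Omega))\cap H^1(0,T;L^2(\Omega))\subset C([0,T];H^1(\Omega))$, so the slice is well defined and the energy identity may be started at $t^\ast$ — a qualitative-regularity point you should state explicitly, since the pigeonhole argument alone only controls $g(t)$ at almost every $t$. What each approach buys: the cutoff works directly at the level of the weak formulation, needs no pointwise-in-time information, and is the more robust device; your slice argument is more elementary in that it reuses the computation of Lemma~\ref{t:vt} verbatim with a shifted lower limit, with no $\phi'$ bookkeeping. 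One small simplification: your final restriction to $\varepsilon^\beta\le T_1$ is unnecessary — since the constant in \eqref{fas340} is allowed to depend on $T_1$, the additive term $\gamma/T_1$ is absorbed into $\gamma/\varepsilon^\beta$ for all $\varepsilon\le1$ simply by enlarging $\gamma$.
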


\par\noindent
\textit{Proof.}
Let us multiply \eqref{fas000} times
$\phi(t)\partial v_\varepsilon/\partial t$,
where $\phi(t)\in C^\infty(\mathbb{R})$ such that
$\phi(t)=0$ for $t\le T_1/2$,
$\phi(t)=1$ for $t> T_1$,
and
$0\le\phi'(t)\le 4/T_1$,
and integrate by parts to obtain
\begin{displaymath}
\label{fas350}
\begin{split}
&
\int_0^t\int_\Omega
\phi
\frac{a^\varepsilon_1 a^\varepsilon_2}{b^\varepsilon_1}
\Big(\frac{\partial v_\varepsilon}{\partial s}\Big)^2
\,\mathrm{d}x\,\mathrm{d}s
+
\frac{1}{2}
\int_\Omega
\phi
 \Big(b^\varepsilon_2+\varepsilon\frac{b^\varepsilon}{b^\varepsilon_1}\Big)
 B^\varepsilon
 \nabla v_\varepsilon
 \cdot
 \nabla v_\varepsilon
\,\mathrm{d}x
\\
&
\quad
=
-
\int_0^t\int_\Omega
 \phi
 a^\varepsilon_1 a^\varepsilon_2
 v_\varepsilon
 \Big(\frac{\partial v_\varepsilon}{\partial s}\Big)
 \frac{\partial}{\partial s}\frac{1}{b^\varepsilon_1}
 \,\mathrm{d}x\,\mathrm{d}s
+
\frac{1}{2}
\int_0^t\int_\Omega
\frac{\partial}{\partial s}
\Big[
     \phi
     \Big(b^\varepsilon_2+\varepsilon\frac{b^\varepsilon}{b^\varepsilon_1}\Big)
     B^\varepsilon
\Big]
 \nabla v_\varepsilon
 \cdot
 \nabla v_\varepsilon
 \,\mathrm{d}x\,\mathrm{d}s
\\
&
\qquad
-
\int_0^t\int_\Omega
 \phi
 v_\varepsilon
 B^\varepsilon
 \nabla
 \Big(b^\varepsilon_2+\varepsilon\frac{b^\varepsilon}{b^\varepsilon_1}\Big)
 \cdot
 \nabla
 \frac{\partial v_\varepsilon}{\partial s}
 \,\mathrm{d}x\,\mathrm{d}s
+
\int_0^t\int_\Omega
 \phi
 f
 \frac{\partial v_\varepsilon}{\partial s}
 \,\mathrm{d}x\,\mathrm{d}s
\\
&
\quad
=
I_1+I_2+I_3+I_4
\;.
\end{split}
\end{displaymath}
Now, the terms $I_1$ and $I_4$ are treated as in the proof
of Lemma~\ref{t:vt}.
For $I_2$, we write
\begin{gather}
\label{fas360}
|I_2|
\le
\frac{\gamma(1+T_1^{-1})}{\varepsilon^\beta}
\int_0^T\int_\Omega
 |\nabla v_\varepsilon|^2
 \,\mathrm{d}x\,\mathrm{d}t
\;.
\end{gather}
Moreover, we calculate
\begin{equation}
\label{fas370}
I_3
=
-\int_\Omega
\Big[
 \phi
 v_\varepsilon
 B^\varepsilon
 \nabla
 \Big(b^\varepsilon_2+\varepsilon\frac{b^\varepsilon}{b^\varepsilon_1}\Big)
 \cdot
 \nabla
 v_\varepsilon
\Big]_0^t
 \,\mathrm{d}x
+
\int_0^t\int_\Omega
\frac{\partial}{\partial s}
\Big[
 \phi
 v_\varepsilon
 B^\varepsilon
 \nabla
 \Big(b^\varepsilon_2+\varepsilon\frac{b^\varepsilon}{b^\varepsilon_1}\Big)
\Big]
 \cdot
 \nabla
 v_\varepsilon
 \,\mathrm{d}x \,\mathrm{d}s
\end{equation}
and thus, recalling that $\alpha\le1$, we obtain
\begin{multline}
\label{fas380}
|I_3|
\le
\gamma
\int_\Omega
 |\nabla v_\varepsilon|
 |v_\varepsilon|
 \,\mathrm{d}x
+
\frac{\gamma(1+T_1^{-1})}{\varepsilon^\beta}
\int_0^T \int_\Omega
 |\nabla v_\varepsilon|
 |v_\varepsilon|
 \,\mathrm{d}x \,\mathrm{d}t
\\
+
\gamma
\int_0^T \int_\Omega
 \phi
 \Big|\nabla v_\varepsilon\Big|
 \Big|\frac{\partial v_\varepsilon}{\partial t}\Big|
 \,\mathrm{d}x \,\mathrm{d}t
\;.
\end{multline}
As in the proof of Lemma~\ref{t:vt},
a final application of the Young inequality
yields \eqref{fas340}.
\qed

\begin{proposition}
 \label{p:time_comp}
For any $0<\delta<\maxT/2$, there exists $\const>0$ (depending on $T,\Vert f\Vert_2, \Vert \bar v_\varepsilon\Vert_2$,
the structural constants of the problem and $\delta$), such that
 \begin{equation}
   \label{eq:time_comp}
   \int_{\delta}^{\maxT-\delta}
   \int_{\Oset}
   \Abs{\ve(x,t+h)-\ve(x,t)}^2
   \di x
   \di t
   \le
   \const
   \sqrt{h}
   \,,
 \end{equation}
for any $0<h<\delta/2$.
\end{proposition}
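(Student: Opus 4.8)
The plan is to control
$J:=\int_{\delta}^{T-\delta}\int_{\Omega}\bigl|v_\varepsilon(x,t+h)-v_\varepsilon(x,t)\bigr|^2\,\mathrm{d}x\,\mathrm{d}t$
by writing one of the two factors of the increment as $v_\varepsilon(\cdot,t+h)-v_\varepsilon(\cdot,t)=\int_t^{t+h}\partial_s v_\varepsilon\,\mathrm{d}s$ and then eliminating $\partial_s v_\varepsilon$ through the equation \eqref{fas000}. With the shorthands $g_\varepsilon:=(b_2^\varepsilon+\varepsilon b^\varepsilon/b_1^\varepsilon)v_\varepsilon$, $m_\varepsilon:=a_1^\varepsilon a_2^\varepsilon$ and $D_hw:=w(\cdot,t+h)-w(\cdot,t)$, that equation reads $m_\varepsilon\,\partial_t(v_\varepsilon/b_1^\varepsilon)=\mathrm{div}(B^\varepsilon\nabla g_\varepsilon)+f$. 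Since $b_1$ does not depend on $\tau$, the coefficient $b_1^\varepsilon$ is Lipschitz in $t$ uniformly in $\varepsilon$, so the increments of $v_\varepsilon$ and of $u_\varepsilon=v_\varepsilon/b_1^\varepsilon$ differ only by a term of size $O(h)$; hence it is equivalent to estimate either one, and I would keep both in play as convenient. The two a priori ingredients to be combined are the uniform energy bound of Lemma~\ref{t:enest}, controlling $v_\varepsilon$ in $L^\infty(0,T;L^2(\Omega))\cap L^2(0,T;H^1_0(\Omega))$ independently of $\varepsilon$, and the interior time-derivative bound of Lemma~\ref{t:vtl}, giving $\|\partial_t v_\varepsilon\|_{L^2((\delta/2,T)\times\Omega)}^2\le\gamma\,\varepsilon^{-\beta}$.

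The harmless contributions come from the diffusive and source terms. Multiplying the increment identity by $D_hv_\varepsilon$, integrating in $x$ and in $s\in(t,t+h)$ and integrating the divergence by parts in space, the diffusive contribution becomes an integral of $B^\varepsilon\nabla g_\varepsilon$ against the gradient of the increment (times bounded coefficients). The decisive point is that $\nabla g_\varepsilon$ is bounded in $L^2(\Omega_T)$ uniformly in $\varepsilon$: one has $\nabla g_\varepsilon=(\nabla b_2^\varepsilon+\varepsilon\nabla(b^\varepsilon/b_1^\varepsilon))v_\varepsilon+(b_2^\varepsilon+\varepsilon b^\varepsilon/b_1^\varepsilon)\nabla v_\varepsilon$, where $\nabla b_2^\varepsilon$ is bounded because $b_2$ carries no fast spatial variable, while the assumption $\alpha\le1$ forces $\varepsilon\nabla(b^\varepsilon/b_1^\varepsilon)=O(\varepsilon^{1-\alpha})$ to stay bounded; the remaining factors are controlled by Lemma~\ref{t:enest}. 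A Cauchy--Schwarz estimate on the interval $(t,t+h)$, of length $h$, then yields a factor $\sqrt h$; after integration in $t$ and a further use of Lemma~\ref{t:enest} together with $f\in L^2(\Omega_T)$, these terms are bounded by $\gamma\sqrt h$, precisely the target rate.

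The main obstacle is the capacity term $m_\varepsilon\,\partial_s u_\varepsilon$. Because $a_2^\varepsilon$ oscillates in time with period $\varepsilon^\beta$, one has $\partial_s m_\varepsilon=O(\varepsilon^{-\beta})$, so the integration by parts in time that would expose the desired square $\int_\Omega m_\varepsilon(D_hu_\varepsilon)^2$ generates a remainder $\int_t^{t+h}\!\int_\Omega(\partial_s m_\varepsilon)\,u_\varepsilon\,D_hg_\varepsilon$ that a crude estimate bounds only by $O(\varepsilon^{-\beta}h)$, not uniformly in $\varepsilon$. This is exactly where the rate $\sqrt h$, rather than the naive $h$, originates, and the heart of the proof is to control this remainder uniformly. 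I would argue by a dichotomy governed by the size of $h$ relative to $\varepsilon^\beta$. If $h\le\varepsilon^\beta$ (the shift stays within one period), Lemma~\ref{t:vtl} gives at once $J\le h^2\|\partial_t v_\varepsilon\|_{L^2}^2\le\gamma\,h^2\varepsilon^{-\beta}\le\gamma\,h\le\gamma\sqrt h$, and nothing more is needed. If $h>\varepsilon^\beta$ (the shift spans many periods), the fast oscillations of $a_2^\varepsilon$ must be averaged: using that $\partial_\tau a_2$ has zero mean over the period — equivalently, that the time-average of the oscillating capacity over $(t,t+h)$ differs from the cell-average $\int_{\mathcal{S}}a_2\,\mathrm{d}\tau$ by $O(\varepsilon^\beta/h)$ — the rapidly oscillating remainder cancels down to quantities again controlled uniformly by Lemma~\ref{t:enest}. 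Combining the two regimes gives $J\le\gamma\sqrt h$ with $\gamma$ independent of $\varepsilon$.

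I expect the averaging step in the many-period regime to be the delicate point: one has to quantify the error of replacing $a_2^\varepsilon$ by its mean when it is paired with the non-smooth, time-oscillating factor $u_\varepsilon$, and the interior time-derivative estimate of Lemma~\ref{t:vtl} and the uniform energy estimate of Lemma~\ref{t:enest} must be used in tandem so that the constant stays bounded as $\varepsilon\to0$. By contrast, the reduction itself, the treatment of the diffusive and source terms, and the easy regime $h\le\varepsilon^\beta$ are routine once $\nabla g_\varepsilon$ has been shown to be uniformly bounded, which is where the hypothesis $\alpha\le1$ enters in an essential way.
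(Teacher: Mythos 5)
Your reduction, your treatment of the diffusive and source terms (including the correct use of $\alpha\le 1$ to bound $\nabla g_\varepsilon$, which matches the paper's estimate \eqref{eq:time_comp_k}), and your easy regime $h\le\varepsilon^\beta$ are all sound: there, Lemma~\ref{t:vtl} indeed gives $J\le \gamma h^2\varepsilon^{-\beta}\le\gamma h$. But in the complementary regime $h>\varepsilon^\beta$ there is a genuine gap, which you yourself flag without resolving: the claimed cancellation of the remainder $\int_t^{t+h}\!\int_\Omega(\partial_s m_\varepsilon)\,u_\varepsilon\,D_hg_\varepsilon$ via the mean-zero property of $\partial_\tau a_2$ does not go through. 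A mean-zero fast oscillation helps only when paired with a factor that is slowly varying on the scale $\varepsilon^\beta$, and here it is paired with $u_\varepsilon\,D_hg_\varepsilon$, which oscillates at exactly that scale (Lemma~\ref{t:vtl} only gives $\|\partial_t v_\varepsilon\|_{L^2}=O(\varepsilon^{-\beta/2})$). Concretely: if you integrate by parts in $s$ to exploit the mean-zero structure, the antiderivative of $\varepsilon^{-\beta}(\partial_\tau a_2)^\varepsilon$ is a bounded periodic function of size $O(1)$, not $O(\varepsilon^\beta)$, so you pick up boundary terms of the form $\int_\Omega |u_\varepsilon|\,|D_hg_\varepsilon|\,\mathrm{d}x$ that do not vanish as $h\to0$; indeed $D_hg_\varepsilon$ contains the increment $b_2^\varepsilon(\cdot,t+h)-b_2^\varepsilon(\cdot,t)$, which is $O(1)$ precisely when $h>\varepsilon^\beta$, since $b_2$ oscillates in $\tau$. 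For the same reason even your extraction of the "main term" $\int_\Omega m_\varepsilon(D_hu_\varepsilon)^2$ is problematic: the equation naturally pairs the flux against increments of $g_\varepsilon$, not of $v_\varepsilon$, and in the many-period regime these differ by $O(1)$ coefficient oscillations, not by something small.

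The paper's proof avoids all of this by never differencing or differentiating the fast-in-time coefficients. It first chooses the test function in \eqref{fas023} of the form $\phi=\varphi/a_2^\varepsilon$ (see \eqref{eq:est_weak}), so the only coefficient attached to the time-derivative structure is $a_1^\varepsilon/b_1^\varepsilon$, which has no $\tau$-dependence and is therefore Lipschitz in $t$ uniformly in $\varepsilon$. It then subtracts the weak formulation from its time-shifted version and tests with the Steklov-type function $\varphi_h(x,t)=-\zeta(t)\int_t^{t+h}v_\varepsilon(x,s)\,\mathrm{d}s$: the desired square $\iint\zeta\,(a_1^\varepsilon/b_1^\varepsilon)\,[\tilde v_\varepsilon-v_\varepsilon]^2$ appears directly; the only coefficient difference that arises ($I_{12}$ in the paper) is $O(h)$ by the slow time regularity of $a_1,b_1$; and all flux terms are estimated by bounding shifted and unshifted pieces \emph{separately} against $\nabla\varphi_h$, whose $L^2$ norm carries the $\sqrt h$, using only the uniform energy bound of Lemma~\ref{t:enest} and \eqref{eq:time_comp_k}. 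In particular no dichotomy between $h\lessgtr\varepsilon^\beta$ and no use of Lemma~\ref{t:vt} or Lemma~\ref{t:vtl} is needed. If you want to rescue your scheme, this is the missing idea: divide out $a_2^\varepsilon$ at the level of the test function \emph{before} performing the translation estimate, rather than attempting to average its oscillations afterwards against non-smooth factors.
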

 \begin{proof}
   We select, as a test function in the integral formulation \eqref{fas023},
   the function
   \begin{equation*}
     \phi(x,t)=\frac{
       \phj(x,t)
     }{
       \adue(x,t)
     }
     \,,
          \qquad
     \hbox{with}\ \
     \phj\in H^1_0(\Omega_T)
     \,.
   \end{equation*}
   We obtain
   \begin{multline}
     \label{eq:est_weak}
     \iint_{\Ocyl}
     \Big\{
     -
     \frac{\ve}{\bune}
     \pder{}{t}
     (\aune\phj)
     +
     \Bme
     \grad\fpcov
     \scpr
     \grad\Big(
     \frac{\phj}{\adue}
     \Big)
     \Big\}
     \di x
     \di t
     =
     \iint_{\Ocyl}
     f
     \frac{\phj}{\adue}
     \di x
     \di t
     \,.
   \end{multline}
   Here, for any $F=F(x,t)$, we denote by $\tshift{F}(x,t)=F(x,t+h)$ its
   time shift. Let $\delta\in(0,\maxT/2)$, $0<h<\delta/2$, and
assume that $\phj(x,t)=0$ for $t<\delta/2$ and for
   $t>T-\delta/2$. Using the formula \eqref{eq:est_weak} with $\phj(x,t)$ replaced with $\phj(x,t-h)$, and then changing variables to $(x,t+h)$, but still keeping the old variable names, we obtain
      \begin{multline}
     \label{eq:est_weak_shift}
     \iint_{\Ocyl}
     \Big\{
     -
     \frac{\tshift{\ve}}{\tshift{\bune}}
     \pder{}{t}
     (\tshift{\aune}\phj)
     +
     \tshift{\Bme}
     \grad\tsfpcov
     \scpr
     \grad\Big(
     \frac{\phj}{\tshift{\adue}}
     \Big)
     \Big\}
     \di x
     \di t
     =
     \iint_{\Ocyl}
     \tshift{f}
     \frac{\phj}{\tshift{\adue}}
     \di x
     \di t
     \,.
   \end{multline}
   Next, in \eqref{eq:est_weak}--\eqref{eq:est_weak_shift}, we select $\phj=\phj_{h}$ where
   \begin{equation*}
     \varphi_h(x,t)
     =
     -
     \zeta(t)
     \int_t^{t+h}
     \ve(x,s)
     \di s
     \,,
   \end{equation*}
   where $\zeta\in {\mathcal C}^1_0{(\delta/2,\maxT-\delta/2)}$ is a nonnegative function such that
   $\zeta=1$ in $(\delta,\maxT-\delta)$ and $\abs{\zeta'}\le
   \const/\delta$.
   \\
   On subtracting the two integral formulations \eqref{eq:est_weak} and \eqref{eq:est_weak_shift}, we obtain
   \begin{equation}
     \label{eq:time_comp_ii}
     \begin{split}
     &\iint_{\Ocyl}
     \Big\{
     -
     \Big[
     \frac{\tshift{\ve}}{\tshift{\bune}}
     \tshift{\aune}
     -
     \frac{\ve}{\bune}
     \aune
     \Big]
     \Big\}
     \pder{\varphi_h}{t}
     \di x
     \di t
     +
     \iint_{\Ocyl}
     \Big\{
     -
     \Big[
     \frac{\tshift{\ve}}{\tshift{\bune}}
     \pder{\tshift{\aune}}{t}
     -
     \frac{\ve}{\bune}
     \pder{\aune}{t}
     \Big]
     \Big\}
     \varphi_h
     \di x
     \di t
     \\
     &\quad
     +
     \iint_{\Ocyl}
     \Big\{
     \frac{1}{\tshift{\adue}}
     \tshift{\Bme}
     \grad\tsfpcov
     -
     \frac{1}{\adue}
     \Bme
     \grad\fpcov
     \Big\}
     \scpr
     \grad\phj_{h}
     \di x
     \di t
     \\
     &\quad+
     \iint_{\Ocyl}
     \Big\{
     \tshift{\Bme}
     \grad\tsfpcov
     \scpr
     \grad\frac{1}{\tshift{\adue}}
     -
     \Bme
     \grad\fpcov
     \scpr
     \grad
     \frac{1}{\adue}
     \Big\}
     \phj_{h}
     \di x
     \di t
     \\
     &\qquad=
     \iint_{\Ocyl}
     \Big\{
     \frac{\tshift{f}}{\tshift{\adue}}
     -
     \frac{f}{\adue}
     \Big\}
     \phj_{h}
     \di x
     \di t
     \,.
     \end{split}
   \end{equation}
   For the sake of notational simplicity, we denote each integral with a different symbol, thereby rewriting \eqref{eq:time_comp_ii} as
   \begin{equation*}
     I_{1}
     +
     I_{2}
     +
     I_{3}
     +
     I_{4}
     =
     I_{5}
     \,,
   \end{equation*}
   where, actually, only the estimation of $I_{1}$ requires a detailed calculation.
   Indeed,
   \begin{multline}
     \label{eq:time_comp_iii}
     I_{1}
     =
     \iint_{\Ocyl}
     \Big[
     \frac{\tshift{\ve}}{\tshift{\bune}}
     \tshift{\aune}
     -
     \frac{\ve}{\bune}
     \aune
     \Big]
     \left\{
       \zeta[\tshift{\ve}-\ve]
+
       \zeta'
       \int_t^{t+h}\ve(x,s)\di s
     \right\}
     \di x
     \di t
     \\
     =
     \iint_{\Ocyl}
     [\tshift{\ve}-\ve]^2
     \frac{\zeta\aune}{\bune}
     \di x
     \di t
     +
     \iint_{\Ocyl}
     \tshift{\ve}[\tshift{\ve}-\ve]
     \left[
     \frac{\tshift{\aune}}{\tshift{\bune}}
     -
     \frac{\aune}{\bune}
     \right]
     \zeta
     \di x
     \di t
     \\
     +
     \iint_{\Ocyl}
     \left[\left(
       \frac{\tshift{\ve}}{\tshift{\bune}}
       \tshift{\aune}
       -
       \frac{\ve}{\bune}
       \aune
     \right)
     \zeta'
     \int_t^{t+h}\ve(x,s)\di s\right]
     \di x
     \di t
     =
     I_{11}
     +
     I_{12}
     +
     I_{13}
     \,.
   \end{multline}
   The term $I_{11}$
   essentially equals the one estimated in the statement.  The
   term $I_{12}$ is
   estimated, invoking the time regularity of $\aun$, $\bun$, by
   \begin{multline}
     \label{eq:time_comp_iv_b}
     \abs{I_{12}}
     \le
     \const
     \int_{\Oset}
     \int_{\delta/2}^{\maxT-\delta/2}
     \Abs{\tshift{\ve}}
     \big(
     \Abs{\tshift{\ve}}+\Abs{\ve}
     \big)
     \big[
     \Abs{
       \tshift{\aune}
       -
       \aune
     }
     +
     \Abs{
       \tshift{\bune}
       -
       \bune
     }
     \big]
     \di x
     \di t
     \le
     \const
     \Norma{\ve}{2}^2
     h
     \,.
   \end{multline}
   The integral $I_{13}$ can be bounded by means of
   the
   H\"older inequality as follows
   \begin{multline}
     \label{eq:time_comp_iv}
     \abs{I_{13}}
     \le
     \const
     \Norma{\zeta^{\prime}}{\oo}
     \left(
       \int_{\Oset}\int_{\delta/2}^{\maxT-\delta/2}
       \Abs{\tshift{\ve}}^2+\Abs{\ve}^2
       \di x
       \di t
     \right)^{\frac{1}{2}}
     \left(
       \int_{\Oset}\int_{\delta/2}^{\maxT-\delta/2}
       \Abs{\int_t^{t+h}\ve(x,s)\di s}^2
       \di x
       \di t
     \right)^{\frac{1}{2}}
     \\
     \le
     \frac{\const}{\delta}
     \Norma{\ve}{2}^2
     \sqrt{h}
     \,.
   \end{multline}
   Clearly, the integrals $I_{2}$, $I_{3}$, $I_{4}$ and $I_{5}$ can be
   estimated by means of a similar device, once we remark that, owing to
   the assumed regularity in space of $\bdu$, $\bun$, $\bp$, we get
   \begin{equation}
     \label{eq:time_comp_k}
     \Abs{
       \grad\fpco
     }
     \le
     \abs{\grad\bdue}
     +
     \varepsilon
     \Abs{
       \grad
       \frac{\bpe}{\bune}
     }
     \le
     \gamma
     \,.
   \end{equation}
   For example,  the integral $I_{3}$ can be estimated by
   \begin{multline}
     \label{eq:time_comp_v}
     \abs{I_{3}}
     \le
     \const
     \int_{\Oset}\int_{\delta/2}^{\maxT-\delta/2}
     \big(
     \Abs{\ve}
     +
     \Abs{\grad\ve}
     +
     \Abs{\tshift{\ve}}
     +
     \Abs{\grad\tshift{\ve}}
     \big)
     \Abs{\int_t^{t+h}\grad\ve(x,s)\di s}
     \di x
     \di t
     \\
     \le
     \const
     \big(
     \norma{\ve}{2}^{2}
     +
     \norma{\grad\ve}{2}^{2}
     \big)
     \sqrt{h}
     \,.
   \end{multline}
   Finally, on collecting all the estimates above, we get
   \eqref{eq:time_comp}.
 \end{proof}

\subsection{Unfolding}
\label{s:unfold}
\par\noindent
In the sequel, we denote by $[r]$ the integer part of $r\in\mathbb{R}$ and,
for $x\in\mathbb{R}^n$, we define the vector with integer components
$[x]=([x_1],\dots,[x_n])$.

Let us consider the tiling of $\mathbb{R}^n$
given by the boxes $\varepsilon^\alpha(\xi+\mathcal{Y})$,
with $\xi\in\mathbb{Z}^n$.
Following \cite{AAB2017},
we set
\begin{equation}
\label{fold000}
\Xi_\varepsilon
=
\{\xi\in\mathbb{Z}^n:\,
  \varepsilon^\alpha(\xi+\mathcal{Y})\subset\Omega\},
\quad
\hat\Omega_\varepsilon
=
\text{interior}
\Big\{
      \bigcup_{\xi\in\Xi_\varepsilon}
           \varepsilon^\alpha(\xi+\overline{\mathcal{Y}})
\Big\}
,
\end{equation}
and
\begin{equation}
\label{fold010}
\hat T_\varepsilon
=
\Big\{
      t\in(0,T):\,
      \varepsilon^\beta
      \Big(
           \Big[\frac{t}{\varepsilon^\beta}\Big]+1
      \Big)
      \le T
\Big\}
,
\quad
\Lambda_\varepsilon
=
\hat\Omega_\varepsilon\times\hat T_\varepsilon
\;.
\end{equation}
We introduce also the space-time cell containing the point $(x,t)$ as
$$
\mathcal{Q}_\eps(x,t)=\eps^\alpha\left(\left[\frac{x}{\eps^\alpha}\right]+\mathcal{Y}\right)
\times\eps^\beta\left(\left[\frac{t}{\eps^\beta}\right]+\mathcal{S}\right)\,.
$$

\begin{definition}
\label{d:fold010}
The time--periodic unfolding operator $\mathcal{T}_\varepsilon$
of a Lebesgue measurable function $w$ defined on $\Omega_T$
is given by
\begin{equation}
\label{fold020b}
\mathcal{T}_\varepsilon(w)(x,t,y,\tau)
=
\left\{
\begin{alignedat}{2}
&
w\Big(\varepsilon^\alpha\Big[\frac{x}{\varepsilon^\alpha}\Big]
     +\varepsilon^\alpha y,
      \varepsilon^\beta\Big[\frac{t}{\varepsilon^\beta}\Big]
     +\varepsilon^\beta \tau
 \Big)
,
&
\quad
&
(x,t,y,\tau)\in\Lambda_\varepsilon\times\mathcal{Q},
\\
&
0,
&
\quad
&
\text{otherwise}
.
\end{alignedat}
\right.
\end{equation}
\end{definition}

Note that, by definition, it easily follows that
\begin{equation}
\label{fold080}
\mathcal{T}_\varepsilon(w_1w_2)
=
\mathcal{T}_\varepsilon(w_1)
\mathcal{T}_\varepsilon(w_2)\,.
\end{equation}

\begin{definition}
\label{d:fold000}
The space--time average operator $\mathcal{M}_\varepsilon$
of a Lebesgue integrable function $w$ defined on $\Omega_T$
is given by
\begin{equation}
\label{fold020}
\mathcal{M}_\varepsilon(w)(x,t)
=
\left\{
\begin{alignedat}{2}
&
\frac{1}{\eps^{N\alpha+\beta}}\int_{\mathcal{Q}_\eps(x,t)}w(\zeta,s)\di \zeta\di s
,
&
\quad
&
(x,t)\in\Lambda_\varepsilon,
\\
&
0,
&
\quad
&
\text{otherwise}
.
\end{alignedat}
\right.
\end{equation}
Moreover, the space--time oscillation operator
is defined as
\begin{equation}
\label{fold070}
\mathcal{Z}_\varepsilon(w)(x,t,y,\tau)
=
\mathcal{T}_\varepsilon(w)(x,t,y,\tau)
-
\mathcal{M}_\varepsilon(w)(x,t)
\;.
\end{equation}
\end{definition}

Notice that, by a simple change of variables, it easily follows that
\begin{equation}
\label{fold050}
\mathcal{M}_\varepsilon(w)(x,t)
=
\int_\mathcal{Q}
  \mathcal{T}_\varepsilon(w)(x,t,y,\tau)
\,\text{d}y\,\text{d}\tau\,
\,.
\end{equation}
Finally,
we denote by $\mathcal{M}_\mathcal{S}$
the microscopic time average
of an integrable function $\phi(x,t,y,\tau)$, i.e.
\begin{equation}
\label{fold060}
\mathcal{M}_\mathcal{S}(\phi)(x,t,y)
=
\int_\mathcal{S}
 \phi(x,t,y,\tau)
\,\text{d}\tau
\,.
\end{equation}
We conclude this section, recalling the following result (see \cite[Remark~2.9]{AAB2017}).

\begin{proposition}
\label{fold090}
For $\phi\in L^2(\mathcal{Q};{\mathcal C}(\overline\Omega_T))$
or
$\phi\in L^2(\Omega_T;{\mathcal C}(\overline{\mathcal{Q}}))$,
denote again by $\phi$ its
extension by $\mathcal{Q}$--periodicity
to $\Omega_T\times\mathbb{R}^{n+1}$
and set
$\phi_\varepsilon(x,t)=\phi(x,t,\varepsilon^{-\alpha}x,\varepsilon^{-\beta}t)$.
Then,
$\mathcal{T}_\varepsilon(\phi_\varepsilon) \to \phi$
strongly in $L^2(\Omega_T\times\mathcal{Q})$.
\end{proposition}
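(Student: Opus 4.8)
The plan is to write $\mathcal{T}_\varepsilon(\phi_\varepsilon)$ explicitly and exploit $\mathcal{Q}$--periodicity so that the fast arguments collapse onto $(y,\tau)$. Fix $(x,t,y,\tau)\in\Lambda_\varepsilon\times\mathcal{Q}$ and set
\[
X_\varepsilon=\varepsilon^\alpha\Big[\frac{x}{\varepsilon^\alpha}\Big]+\varepsilon^\alpha y,\qquad
S_\varepsilon=\varepsilon^\beta\Big[\frac{t}{\varepsilon^\beta}\Big]+\varepsilon^\beta\tau.
\]
By Definition~\ref{d:fold010} and the definition of $\phi_\varepsilon$, one has $\mathcal{T}_\varepsilon(\phi_\varepsilon)(x,t,y,\tau)=\phi(X_\varepsilon,S_\varepsilon,X_\varepsilon/\varepsilon^\alpha,S_\varepsilon/\varepsilon^\beta)$. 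Since $X_\varepsilon/\varepsilon^\alpha=[x/\varepsilon^\alpha]+y$ and $S_\varepsilon/\varepsilon^\beta=[t/\varepsilon^\beta]+\tau$ differ from $(y,\tau)$ by an element of $\mathbb{Z}^n\times\mathbb{Z}$, the $\mathcal{Q}$--periodicity of $\phi$ in the last two variables yields the key identity
\[
\mathcal{T}_\varepsilon(\phi_\varepsilon)(x,t,y,\tau)=\phi(X_\varepsilon,S_\varepsilon,y,\tau),\qquad (x,t,y,\tau)\in\Lambda_\varepsilon\times\mathcal{Q}.
\]
Moreover $|X_\varepsilon-x|\le C\varepsilon^\alpha$ and $|S_\varepsilon-t|\le C\varepsilon^\beta$, so the microscopic variables are reproduced \emph{exactly} and only the macroscopic arguments are shifted, by an amount vanishing uniformly with $\varepsilon$.

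For the first class $\phi\in L^2(\mathcal{Q};\mathcal{C}(\overline\Omega_T))$, I would argue by pointwise convergence and dominated convergence. For a.e.\ $(x,t)$ one has $(x,t)\in\Lambda_\varepsilon$ for all small $\varepsilon$, so the identity above applies; for a.e.\ $(y,\tau)$ the function $\phi(\cdot,\cdot,y,\tau)$ is uniformly continuous on the compact set $\overline\Omega_T$, whence, combined with $|X_\varepsilon-x|+|S_\varepsilon-t|\to0$, we get $\mathcal{T}_\varepsilon(\phi_\varepsilon)(x,t,y,\tau)\to\phi(x,t,y,\tau)$ a.e.\ on $\Omega_T\times\mathcal{Q}$ (the set of $(x,t)$ never entering $\Lambda_\varepsilon$ being negligible). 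The integrand $|\mathcal{T}_\varepsilon(\phi_\varepsilon)-\phi|^2$ is dominated by $4\,\|\phi(\cdot,\cdot,y,\tau)\|_{\mathcal{C}(\overline\Omega_T)}^2$, which is integrable on $\Omega_T\times\mathcal{Q}$; dominated convergence then gives strong $L^2(\Omega_T\times\mathcal{Q})$ convergence.

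For the second class $\phi\in L^2(\Omega_T;\mathcal{C}(\overline{\mathcal{Q}}))$, the pointwise argument fails, because the dependence on the \emph{macroscopic} variables $(X_\varepsilon,S_\varepsilon)$ — precisely the ones being perturbed — is only $L^2$. Here I would use a density argument resting on a uniform bound for $\phi\mapsto\mathcal{T}_\varepsilon(\phi_\varepsilon)$. Performing the change of variables $(X,S)=(X_\varepsilon,S_\varepsilon)$ cell by cell in the identity above gives $\|\mathcal{T}_\varepsilon(\phi_\varepsilon)\|_{L^2(\Omega_T\times\mathcal{Q})}=\|\phi_\varepsilon\|_{L^2(\Lambda_\varepsilon)}\le\|\phi\|_{L^2(\Omega_T;\mathcal{C}(\overline{\mathcal{Q}}))}$ uniformly in $\varepsilon$, and similarly $\|\phi\|_{L^2(\Omega_T\times\mathcal{Q})}\le\|\phi\|_{L^2(\Omega_T;\mathcal{C}(\overline{\mathcal{Q}}))}$. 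Given $\eta>0$, I would choose $\psi\in\mathcal{C}(\overline\Omega_T\times\overline{\mathcal{Q}})$, dense in $L^2(\Omega_T;\mathcal{C}(\overline{\mathcal{Q}}))$, with $\|\phi-\psi\|_{L^2(\Omega_T;\mathcal{C})}<\eta$, apply the first step to $\psi$, and bound the remainders $\mathcal{T}_\varepsilon((\phi-\psi)_\varepsilon)$ and $\phi-\psi$ by the two uniform estimates, closing the argument by the usual $\eta/3$ splitting.

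The main obstacle is precisely this second case: one cannot pass to the limit pointwise, so everything hinges on the uniform operator bound $\|\mathcal{T}_\varepsilon(\phi_\varepsilon)\|_{L^2(\Omega_T\times\mathcal{Q})}\le\|\phi\|_{L^2(\Omega_T;\mathcal{C}(\overline{\mathcal{Q}}))}$. Its proof must carefully account for the boundary layer $\Omega_T\setminus\Lambda_\varepsilon$, where $\mathcal{T}_\varepsilon$ vanishes while $\phi$ need not; since $|\Omega_T\setminus\Lambda_\varepsilon|\to0$ and the limit lies in $L^2$, the absolute continuity of the integral absorbs this discrepancy, completing the proof in both cases.
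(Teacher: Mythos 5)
Your proof is correct; in fact the paper itself gives no proof of Proposition~\ref{fold090}, merely recalling it from \cite[Remark~2.9]{AAB2017}, and your argument is precisely the standard one from the unfolding literature: the exact identity $\mathcal{T}_\varepsilon(\phi_\varepsilon)(x,t,y,\tau)=\phi(X_\varepsilon,S_\varepsilon,y,\tau)$ on $\Lambda_\varepsilon\times\mathcal{Q}$, followed by uniform continuity plus dominated convergence for $\phi\in L^2(\mathcal{Q};\mathcal{C}(\overline\Omega_T))$, and the $L^2$ isometry of $\mathcal{T}_\varepsilon$ on $\Lambda_\varepsilon$ combined with density of $\mathcal{C}(\overline\Omega_T\times\overline{\mathcal{Q}})$ for $\phi\in L^2(\Omega_T;\mathcal{C}(\overline{\mathcal{Q}}))$. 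The only point left implicit --- that $\phi_\varepsilon$ is well defined a.e.\ and measurable, since $\phi$ is only a Bochner-class function and the trace on the oscillating diagonal is taken through a Carath\'eodory-type representative --- is routine housekeeping and does not affect the validity of the argument.
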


For later use, we define the functional spaces
\begin{equation}\label{d:space}
\begin{aligned}
& \Hper(\mathcal{Y})=\{v\in H^1_\textup{loc}(\mathbb{R}^n):\ \hbox{$v$ is $\mathcal{Y}$-periodic}\},
\cr
& \Hper(\mathcal{Q})=\{v\in H^1_\textup{loc}(\mathbb{R}^{n+1}):\ \hbox{$v$ is $\mathcal{Q}$-periodic}\}.
\end{aligned}
\end{equation}

\section{Homogenization}
\label{s:hom}
\par\noindent
In this section,
$u_{\varepsilon}$ and $v_{\varepsilon}$
are the solutions of problem \eqref{prob030}--\eqref{prob050}
and \eqref{fas000}--\eqref{fas020} in Subsection~\ref{s:p-mod}, and we assume all the hypotheses listed there.
As in Section \ref{s:pre}, we always assume $\alpha\le 1$.

We remark that, in all the cases we deal with, the final structure of
the macroscopic homogenized equation
will be the same, though the coefficients in it have to be
defined case--by--case.
Results are presented in two subsections:
Section~\ref{s:fast} is devoted to the case $\beta\ge2\alpha$
(fast oscillations),
while in Section~\ref{s:slow} the case $\beta<2\alpha$
(slow oscillations) is studied.

In each case we prove two theorems, the first
states the homogenization result and gives
the limit two--scale system, while the second one
introduces the corrector factorization and the resulting
single scale equation.
For technical reasons,
the uniqueness of the solutions
of the two limit problems is dealt
in the corollaries following the theorems.

\subsection{Fast oscillations}
\label{s:fast}
\par\noindent
Here, we treat the cases where $\beta\ge2\alpha$,
distinguishing between $\beta=2\alpha$ and $\beta>2\alpha$.

\begin{theorem}
\label{t:b=2aperv}
Let $\beta=2\alpha$.
Then, there exist
$v\in L^2(0,T;H^1_0(\Omega))$
and
$v_1\in L^2(\Omega_T;\Hper(\mathcal{Q}))$,
with $\int_\mathcal{Q} v_1 \,\emph{d}y\,\emph{d}\tau=0$,
such that
\begin{alignat}{2}
\label{fas030}
&
v_\varepsilon\rightharpoonup v\,,
\qquad
&
\emph{ weakly in } L^2(\Omega_T);
\\
\label{fas040}
&
v_\varepsilon\rightharpoonup v\,,
\qquad
&
\emph{ weakly in } L^2(0,T;H^1_0(\Omega));
\\
\label{fas050}
&
\mathcal{T}_\varepsilon(v_\varepsilon)
\rightharpoonup v\,,
\qquad
&
\emph{ weakly in } L^2(\Omega_T;H^1(\mathcal{Q}));
\\
\label{fas060}
&
\mathcal{T}_\varepsilon(\nabla v_\varepsilon)
\rightharpoonup
\nabla v+\nabla_y v_1\,,
\qquad
&
\emph{ weakly in } L^2(\Omega_T\times\mathcal{Q});
\\
\label{fas062}
&
\varepsilon^\alpha
\mathcal{T}_\varepsilon
\Big(
     \frac{\partial v_\varepsilon}{\partial t}
\Big)
\rightharpoonup
\frac{\partial v_1}{\partial\tau}\,,
\qquad
&
\emph{ weakly in } L^2(\Omega_T\times\mathcal{Q}).
\end{alignat}
Moreover,
the pair $(v,v_1)$ is a weak solution
of the two--scale problem
\begin{alignat}{2}
&
\int_{\mathcal{Q}}
\Big[
a_1
\frac{\partial}{\partial t}
\Big(
     \frac{v}{b_1}
\Big)
& \!\!\!\!\!\!\!\!\!\!\!\!\!\!\!\!\!\!\!\!\! &
\nonumber
\\
\label{fas070}
&
-
\frac{1}{a_2}
\emph{div}
\Big(
B
 \Big(
      b_2(\nabla v+\nabla_y v_1)
      +v\nabla b_2
      +\omega_{\alpha,1}v\nabla_y\Big(\frac{b}{b_1}\Big)
 \Big)
\Big)
\Big]
\,\emph{d}y\,\emph{d}\tau
=
f\!\!
\int_{\mathcal{S}}
\frac{\emph{d}\tau}{a_2}\,,
\!\!\!\!\!\!\!\!\!\!\!\!\!\!\!\!\!\! &
\emph{ in } \Omega_T\,;
&
\\
\label{fas075}
& 
\frac{a_1}{b_1}\frac{\partial v_1}{\partial \tau}
-
\frac{1}{a_2}
\emph{div}_y
\Big[
B
 \Big(
      b_2(\nabla v+\nabla_y v_1)
      +v\nabla b_2
      +\omega_{\alpha,1}v\nabla_y\Big(\frac{b}{b_1}\Big)
 \Big)
\Big]
=0\,,
\!\!\!\!\!\!\!\!\!\!\!\!\!\!
& \emph{ in } \Omega_T\times\mathcal{Q}\,;
&
\\
\label{fas080}
&
v=0\,,
\!\!\!\!\!\!\!\!\!\!\!\!\!\!\!\!\!\!\!\!\!\!\!\!\!\!\!\!\!\!\!\!\!\!\!\!\!\!\!\!\!\!\!\!\!\!\!\!\!\!\!\!\!\!\!\!
& \emph{ on } \partial\Omega\times(0,T)\,;
&
\\
\label{fas090}
&
v(x,0)=\bar u(x)\left(\int_{\mathcal{Y}}a_1(x,0,y)\,\emph{d}y\right)
       \Big(
            \int_{\mathcal{Y}}
              \frac{a_1(x,0,y)}{b_1(x,0,y)}
\,\emph{d}y
       \Big)^{-1}\,,
\!\!\!\!\!\!\!\!\!\!\!\!\!\!\!\!\!\!\!\!\!
& \emph{ in } \Omega
\;.
&
\end{alignat}
\end{theorem}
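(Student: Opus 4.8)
The plan is to combine the uniform bounds of Section~\ref{s:pre} with the periodic unfolding compactness, and then to pass to the limit in the weak formulation \eqref{fas023}, choosing the test function so as to neutralise the fast time oscillation carried by $a_2^\varepsilon$.

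First I would extract the limits. Lemma~\ref{t:enest} bounds $v_\varepsilon$ in $L^\infty(0,T;L^2(\Omega))\cap L^2(0,T;H^1_0(\Omega))$, which yields a subsequence satisfying \eqref{fas030}--\eqref{fas040} and, by weak closedness of $L^2(0,T;H^1_0(\Omega))$, the Dirichlet condition \eqref{fas080}. The standard unfolding compactness for gradient-bounded sequences then produces $v_1\in L^2(\Omega_T;\Hper(\mathcal{Q}))$ with $\int_\mathcal{Q} v_1=0$ and the convergences \eqref{fas050}--\eqref{fas060}, together with the corrector convergence $\varepsilon^{-\alpha}(\mathcal{T}_\varepsilon(v_\varepsilon)-\mathcal{M}_\varepsilon(v_\varepsilon))\rightharpoonup v_1$. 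For \eqref{fas062} I would use Lemma~\ref{t:vt} together with $\|\nabla\bar v_\varepsilon\|_2=O(\varepsilon^{-\alpha})$ and $\beta=2\alpha$ to get $\|\partial_t v_\varepsilon\|_{L^2(\Omega_T)}=O(\varepsilon^{-\alpha})$, so that $\varepsilon^\alpha\mathcal{T}_\varepsilon(\partial_t v_\varepsilon)$ is bounded in $L^2(\Omega_T\times\mathcal{Q})$ and converges weakly to some $\xi$. Since $\partial_\tau\mathcal{T}_\varepsilon(v_\varepsilon)=\varepsilon^\beta\mathcal{T}_\varepsilon(\partial_t v_\varepsilon)$, testing against a smooth $\mathcal{Q}$-periodic $\Psi$, integrating by parts in $\tau$, and exploiting that $\partial_\tau\Psi$ has zero $\tau$-average (so that $\mathcal{M}_\varepsilon(v_\varepsilon)$ drops out) identifies $\xi=\partial_\tau v_1$ through the corrector convergence. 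Proposition~\ref{p:time_comp} supplies the strong $L^2$ compactness of $v_\varepsilon$ in the interior that is needed to justify these passages.

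Next I would pass to the limit in \eqref{fas023} using the test function $\varphi/a_2^\varepsilon$ already employed in Proposition~\ref{p:time_comp}. The virtue of this choice is that $\partial_t(a_1^\varepsilon a_2^\varepsilon\cdot\varphi/a_2^\varepsilon)=\partial_t(a_1^\varepsilon\varphi)$, and since $a_1$ carries no fast time dependence this time term stays bounded. To obtain the cell equation \eqref{fas075} I take $\varphi=\varepsilon^\alpha\psi(x,t)\,\Theta(x/\varepsilon^\alpha,t/\varepsilon^\beta)$ with $\Theta$ smooth and $\mathcal{Q}$-periodic: the test gradient is then $\psi(\nabla_y\Theta)^\varepsilon/a_2^\varepsilon$ at leading order, while the time term reduces to $-\varepsilon^{-\alpha}\int (v_\varepsilon/b_1^\varepsilon)\,a_1^\varepsilon\psi(\partial_\tau\Theta)^\varepsilon$. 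Unfolding, integrating by parts in $\tau$, and invoking \eqref{fas062} turns the singular factor $\varepsilon^{-\alpha}\partial_\tau\mathcal{T}_\varepsilon(v_\varepsilon)=\varepsilon^\alpha\mathcal{T}_\varepsilon(\partial_t v_\varepsilon)$ into $\partial_\tau v_1$, whereas the $\tau$-derivatives falling on $a_1^\varepsilon,b_1^\varepsilon$ are $O(\varepsilon^\beta)$ because these coefficients do not oscillate in time. Letting $\psi,\Theta$ vary and integrating by parts in $y$ then gives \eqref{fas075}. For the macroscopic equation \eqref{fas070} I take $\varphi=\phi(x,t)$ with $\phi(\cdot,T)=0$ and unfold the flux $\nabla((b_2^\varepsilon+\varepsilon b^\varepsilon/b_1^\varepsilon)v_\varepsilon)$: the part $b_2^\varepsilon\nabla v_\varepsilon+v_\varepsilon\nabla b_2^\varepsilon$ converges to $b_2(\nabla v+\nabla_y v_1)+v\nabla b_2$, the term $\varepsilon(b^\varepsilon/b_1^\varepsilon)\nabla v_\varepsilon$ is $O(\varepsilon)$ and disappears, while $\varepsilon v_\varepsilon\nabla(b^\varepsilon/b_1^\varepsilon)$ contains the factor $\varepsilon^{1-\alpha}v_\varepsilon(\nabla_y(b/b_1))^\varepsilon$, which survives only when $\alpha=1$; this is precisely the origin of the indicator $\omega_{\alpha,1}$. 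Combined with $\mathcal{T}_\varepsilon(\nabla(\phi/a_2^\varepsilon))\to\nabla(\phi/a_2)$ and with the right-hand side $\mathcal{T}_\varepsilon(f\phi/a_2^\varepsilon)\to f\phi/a_2$, whose $\mathcal{Q}$-average is $f\phi\int_\mathcal{S}a_2^{-1}\,\mathrm{d}\tau$, the limit is exactly the weak form of \eqref{fas070}. Keeping the $t=0$ boundary term, where the right-hand side of \eqref{fas023} contributes $\int_\Omega\bar u\,a_1^\varepsilon(\cdot,0)\phi(\cdot,0)$ after $\bar v_\varepsilon=b_1^\varepsilon(\cdot,0)\bar u$ cancels $b_1^\varepsilon$, and matching it with the $t=0$ term from the unfolded time integral yields the initial condition \eqref{fas090}.

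The hardest part is the time term. The fast oscillation of $a_2^\varepsilon$ would make $\partial_t(a_1^\varepsilon a_2^\varepsilon\phi)$ blow up like $\varepsilon^{-\beta}$ for a naive test function, and even after the $1/a_2^\varepsilon$ reduction the microscopic test function forces a singular $\varepsilon^{-\alpha}$ prefactor that is tamed only through the delicate identification \eqref{fas062} of the scaled unfolded time derivative as $\partial_\tau v_1$. It is this coupling of the spatial corrector with the time oscillation that realises the balance $\beta=2\alpha$. Finally, the convergences above hold a priori only along a subsequence; the full convergence asserted in \eqref{fas030}--\eqref{fas062}, and hence the fact that the limit pair $(v,v_1)$ is independent of the extracted subsequence, will follow from the uniqueness of solutions of the two-scale problem \eqref{fas070}--\eqref{fas090}, which I would defer to the corollary announced after the theorem.
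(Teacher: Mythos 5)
Your proposal is correct and follows essentially the same route as the paper's proof: the same a priori bounds (Lemma~\ref{t:enest}, and Lemma~\ref{t:vt} combined with $\|\nabla\bar v_\varepsilon\|_2=O(\varepsilon^{-\alpha})$ and $\beta=2\alpha$), the same unfolding compactness --- your hands-on identification of $\varepsilon^\alpha\mathcal{T}_\varepsilon(\partial_t v_\varepsilon)$ with $\partial_\tau v_1$ is exactly the content of the cited result \cite[Theorem~2.18]{AAB2017} --- the same two test functions $\varphi/a_2^\varepsilon$ and $\varepsilon^\alpha(\varphi/a_2^\varepsilon)\psi^\varepsilon$ with the factor $\varepsilon^{1-\alpha}\nabla_y(b/b_1)$ generating $\omega_{\alpha,1}$ precisely as in \eqref{pfas010}--\eqref{pfas030}, and the same deferral of uniqueness (hence full-sequence convergence) to the corollaries. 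Two harmless inaccuracies only: the corrector convergence should carry an additional $\tau$-independent term ($\varepsilon^{-\alpha}\mathcal{Z}_\varepsilon(v_\varepsilon)$ converges to $v_1$ plus a term proportional to $\nabla v$), which drops out when tested against $\partial_\tau\Psi$ so your identification of \eqref{fas062} is unaffected; and Proposition~\ref{p:time_comp} is not actually needed in this theorem --- the paper invokes it only later, in Subsection~\ref{s:main_dim}, to upgrade the weak convergence of $v_\varepsilon$ to strong $L^2$ convergence.
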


\par\noindent
\textit{Proof}
The convergence results in \eqref{fas030} and \eqref{fas040}
follow from the energy estimate \eqref{fas100};
\eqref{fas050} follows from \eqref{fas140}
and
\cite[Proposition~2.12]{AAB2017}
with $m=1/2$
and
by
replacing
$\tau$ with $\varepsilon^{2\alpha}$.
Finally,
\eqref{fas060}
and
\eqref{fas062}
follow from
\eqref{fas140}
and
\cite[Theorem~2.18]{AAB2017}
with $m=1/2$
by
replacing
$\tau$ with $\varepsilon^{2\alpha}$
and
$\varepsilon$ with $\varepsilon^\alpha$.

Now,
we choose, as test function in \eqref{fas023}, 
$\phi_\varepsilon(x,t)=\varphi(x,t)/a_2(x,t,t/\varepsilon^{2\alpha})$,
where $\varphi\in \mathcal{C}^\infty(\overline\Omega_T)$ with $\varphi(x,T)=$ in $\overline\Omega$ and $\varphi=0$ on
$\partial\Omega\times[0,T]$,
and we unfold the resulting equation.
We obtain
\begin{multline}
\label{pfas000}
-\int_0^T\int_\Omega
 \int_\mathcal{Q}
\mathcal{T}_\varepsilon(v_\varepsilon)
\mathcal{T}_\varepsilon\Big(\frac{1}{b^\varepsilon_1}\Big)
\mathcal{T}_\varepsilon\Big(
a^\varepsilon_1\frac{\partial\varphi}{\partial t}
+\varphi \frac{\partial a^\varepsilon_1}{\partial t}
\Big)
\,\textrm{d}y\,\textrm{d}\tau
\,\textrm{d}x\,\textrm{d}t
\\
+
\int_0^T\int_\Omega
\int_\mathcal{Q}
\mathcal{T}_\varepsilon(B^\varepsilon)
\mathcal{T}_\varepsilon\Big(
\nabla\Big(
 \Big(b_2^\varepsilon+\varepsilon\frac{b^\varepsilon}{b_1^\varepsilon}\Big)
 v_\varepsilon\Big)
\Big)
\cdot
\mathcal{T}_\varepsilon
   \Big(\nabla\Big(\frac{\varphi}{a^\varepsilon_2}\Big)\Big)
\,\textrm{d}y\,\textrm{d}\tau
\,\textrm{d}x\,\textrm{d}t
\\
=
\int_0^T\int_\Omega
f\frac{\varphi}{a^\varepsilon_2}
\,\textrm{d}x\,\textrm{d}t
+
\int_\Omega
\bar{u}
a^\varepsilon_1(x,0)
\varphi(x,0)
\,\textrm{d}x
+R_\varepsilon
\;,
\end{multline}
where $R_\varepsilon\to0$ for $\varepsilon\to0$.

We first note that
\begin{multline}
\label{pfas010}
\mathcal{T}_\varepsilon\Big(
\nabla\Big(
 \Big(b_2^\varepsilon+\varepsilon\frac{b^\varepsilon}{b_1^\varepsilon}\Big)
 v_\varepsilon\Big)
\Big)
=
\mathcal{T}_\varepsilon\Big(
\Big(b_2^\varepsilon+\varepsilon\frac{b^\varepsilon}{b_1^\varepsilon}\Big)
\nabla v_\varepsilon
\Big)
\\
+
\mathcal{T}_\varepsilon\Big(
v_\varepsilon
\Big(
\nabla b^\varepsilon_2
+\varepsilon\nabla_x\Big(\frac{b^\varepsilon}{b^\varepsilon_1}\Big)
+\varepsilon^{1-\alpha}
  \nabla_y \Big(\frac{b^\varepsilon}{b^\varepsilon_1}\Big)
\Big)
\Big)
\,.
\end{multline}
Therefore, passing to the limit $\varepsilon\to0$ in \eqref{pfas000},
and taking into account \eqref{fas050} and \eqref{fas060},
we get
\begin{multline}
\label{pfas030}
-\int_0^T\int_\Omega
 \int_\mathcal{Q}
\frac{v}{b_1}
\frac{\partial}{\partial t}(a_1\varphi)
\,\textrm{d}y\,\textrm{d}\tau
\,\textrm{d}x\,\textrm{d}t
\\
+
\int_0^T\int_\Omega
 \int_\mathcal{Q}
 B\Big(
   b_2(\nabla v+\nabla_y v_1)
   +v\Big(\nabla b_2+\omega_{\alpha,1}\nabla_y\Big(\frac{b}{b_1}\Big)\Big)
  \Big)
\cdot
\nabla\Big(\frac{\varphi}{a_2}\Big)
\,\textrm{d}y\,\textrm{d}\tau
\,\textrm{d}x\,\textrm{d}t
\\
=
\int_0^T\int_\Omega
f\varphi
\Big(
\int_\mathcal{S}
\frac{\mathrm{d}\tau}{a_2(x,t,\tau)}
\Big)
\,\textrm{d}x\,\textrm{d}t
+
\int_\Omega
\int_\mathcal{Y}
\bar{u}
a_1(x,0,y)
\varphi(x,0)
\,\textrm{d}y
\,\textrm{d}x
\,,
\end{multline}
i.e., the weak formulation of \eqref{fas070}
and \eqref{fas090}.

Next, we choose
$\phi_\varepsilon(x,t)=
  \varepsilon^\alpha(\varphi(x,t)/a_2(x,t,t/\varepsilon^{2\alpha}))
                     \psi(x/\varepsilon^\alpha,
                                     t/\varepsilon^{2\alpha})$,
where $\varphi\in \mathcal{C}^\infty(\overline\Omega_T)$ with $\varphi=0$ on
$\partial\Omega\times[0,T]$, and $\psi\in \Hper(\mathcal{Q})$,
as test function in \eqref{fas023} (where we do not integrate by parts in time)
and we unfold the resulting equation. We obtain
\begin{multline}
\label{pfas050}
 \varepsilon^\alpha
 \int_0^T\int_\Omega
 \int_\mathcal{Q}
\mathcal{T}_\varepsilon(a^\varepsilon_1)
\Big(
     \mathcal{T}_\varepsilon\Big(\frac{\partial v_\varepsilon}
                                      {\partial t}\Big)
     \mathcal{T}_\varepsilon\Big(\frac{1}
                                      {b^\varepsilon_1}\Big)
     +
     \mathcal{T}_\varepsilon(v_\varepsilon)
     \mathcal{T}_\varepsilon\Big(\frac{\partial}
                                      {\partial t}
                                 \frac{1}{b^\varepsilon_1}\Big)
\Big)
\mathcal{T}_\varepsilon(\varphi\psi)
\,\textrm{d}y\,\textrm{d}\tau
\,\textrm{d}x\,\textrm{d}t
\\
+
\varepsilon^\alpha
\int_0^T\int_\Omega
\int_\mathcal{Q}
\mathcal{T}_\varepsilon(B^\varepsilon)
\Big(
     \mathcal{T}_\varepsilon(v_\varepsilon)
     \mathcal{T}_\varepsilon\Big(
           \nabla_xb^\varepsilon_2
           +\varepsilon\nabla_x\Big(\frac{b^\varepsilon}
                                         {b^\varepsilon_1}\Big)
           +\varepsilon^{1-\alpha}
                      \nabla_y\Big(\frac{b^\varepsilon}
                                        {b^\varepsilon_1}\Big)
                            \Big)
     +
     \mathcal{T}_\varepsilon\Big(b^\varepsilon_2
                                 +\varepsilon\frac{b^\varepsilon}
                                                  {b^\varepsilon_1}\Big)
     \mathcal{T}_\varepsilon(\nabla v_\varepsilon)
\Big)
\\
\cdot
\Big(
     \mathcal{T}_\varepsilon\Big(\nabla\Big(\frac{\varphi}{a_2}\Big)\Big)
     \mathcal{T}_\varepsilon(\psi)
     +
     \mathcal{T}_\varepsilon\Big(\frac{\varphi}{a_2}\Big)
     \mathcal{T}_\varepsilon(\nabla_x\psi)
     +
     \frac{1}{\varepsilon^\alpha}
     \mathcal{T}_\varepsilon\Big(\frac{\varphi}{a_2}\Big)
     \mathcal{T}_\varepsilon(\nabla_y\psi)
\Big)
\,\textrm{d}y\,\textrm{d}\tau
\,\textrm{d}x\,\textrm{d}t
\\
=
\varepsilon^\alpha
\int_0^T\int_\Omega
f \frac{\varphi}{a_2} \psi
\,\textrm{d}x\,\textrm{d}t
+R_\varepsilon
\;,
\end{multline}
where $R_\varepsilon\to0$ for $\varepsilon\to0$.
Now, passing to the limit $\varepsilon\to0$
and taking into account \eqref{fas060} and \eqref{fas062},
we get
\begin{multline}
\label{pfas060}
\int_0^T\int_\Omega
\int_\mathcal{Q}
\frac{a_1}{b_1}
\frac{\partial v_1}{\partial\tau}
\varphi\psi
\,\textrm{d}y\,\textrm{d}\tau
\,\textrm{d}x\,\textrm{d}t
\\
+
\int_0^T\int_\Omega
\int_\mathcal{Q}
\frac{1}{a_2}
B
\Big(
     v
     \Big(\nabla b_2+\omega_{\alpha,1}
          \nabla_y\Big(\frac{b}{b_1}\Big)
     \Big)
     +
     b_2
     (
      \nabla v
      +
      \nabla_y v_1
     )
\Big)
\cdot
     \varphi
     \nabla_y\psi
\,\textrm{d}y\,\textrm{d}\tau
\,\textrm{d}x\,\textrm{d}t
=
0
\;,
\end{multline}
which is the weak formulation of \eqref{fas075}.
\qed

\begin{corollary}
\label{c:b=2a1}
Given $v\in L^2(0,T;H^1_0(\Omega))$
equation \eqref{fas075} admits a unique solution
$v_1\in L^2(\Omega_T;\Hper(\mathcal{Q}))$
with
$\int_\mathcal{Q}v_1\,\textup{d}y\textup{d}\tau=0$.
\end{corollary}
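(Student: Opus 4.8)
The plan is to read \eqref{fas075} as a linear parabolic \emph{cell} problem in the microscopic variables $(y,\tau)\in\mathcal{Q}$, with the macroscopic pair $(x,t)\in\Omega_T$ entering only as a parameter and with $v,\nabla v$ appearing as prescribed data. Multiplying by $a_2$ and collecting the terms that do not contain $v_1$ into a fixed field $H\in L^2$ (assembled from $B$, $b_2$, $\nabla v$, $v\nabla b_2$ and $\omega_{\alpha,1}v\nabla_y(b/b_1)$), the equation becomes $\frac{a_1}{b_1}\partial_\tau v_1-\frac{1}{a_2}\Div_y(Bb_2\nabla_y v_1)=\frac{1}{a_2}\Div_y H$, to be solved in $L^2(\Omega_T;\Hper(\mathcal{Q}))$ under the normalization $\int_\mathcal{Q}v_1\,\mathrm{d}y\,\mathrm{d}\tau=0$. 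Its weak form pairs $v_1$ against $\psi\in\Hper(\mathcal{Q})$ through the non-symmetric form $\mathcal{B}(v_1,\psi)=\int_\mathcal{Q}\frac{a_1}{b_1}(\partial_\tau v_1)\psi+\int_\mathcal{Q}\frac{b_2}{a_2}B\,\nabla_y v_1\cdot\nabla_y\psi$, whose first term is the obstruction to a naive application of Lax--Milgram.

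For \textbf{uniqueness} I would subtract two solutions sharing the same $v$ and test the homogeneous cell equation with the difference $w$ itself; this is legitimate because \eqref{fas062} provides $\partial_\tau v_1\in L^2$, hence $\partial_\tau w\in L^2$. The decisive point is that the coefficient $a_1/b_1$ of the time derivative is independent of $\tau$, so that $\int_\mathcal{Q}\frac{a_1}{b_1}(\partial_\tau w)\,w=\int_\mathcal{Y}\frac{a_1}{b_1}\Big(\tfrac12\int_\mathcal{S}\partial_\tau(w^2)\,\mathrm{d}\tau\Big)\mathrm{d}y=0$ by $\tau$-periodicity: the antisymmetric term drops out \emph{exactly}. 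What survives is $\int_\mathcal{Q}\frac{b_2}{a_2}B\,\nabla_y w\cdot\nabla_y w=0$, and the coercivity of $B$ together with the bounds \eqref{ppro00-500} forces $\nabla_y w=0$, i.e.\ $w=w(x,t,\tau)$. Feeding this back, the elliptic term vanishes and the weak equation reduces to $\int_\mathcal{Q}\frac{a_1}{b_1}(\partial_\tau w)\psi=0$ for every $\psi\in\Hper(\mathcal{Q})$; density of $\Hper(\mathcal{Q})$ in $L^2(\mathcal{Q})$ and positivity of $a_1/b_1$ give $\partial_\tau w=0$, so $w$ is constant in $(y,\tau)$, and the normalization $\int_\mathcal{Q}w=0$ yields $w=0$.

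\textbf{Existence} is the genuinely delicate part and I expect it to be the main obstacle. The form $\mathcal{B}$ is \emph{not} coercive: testing with $v_1$ controls only $\|\nabla_y v_1\|_{L^2(\mathcal{Q})}$, and the elliptic part annihilates every $y$-independent function, so the purely $\tau$-dependent modes are invisible to the energy. I would therefore argue by a Galerkin scheme in $y$, expanding $v_1$ on a basis of $L^2_{\#}(\mathcal{Y})$; this turns \eqref{fas075} into a finite linear system of $\tau$-periodic ODEs with mass matrix $\int_\mathcal{Y}\frac{a_1}{b_1}e_k e_j\,\mathrm{d}y$ (time-independent, positive definite) and $\tau$-dependent stiffness. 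Solvability of the periodic ODE system reduces, through Floquet theory, to invertibility of $I-\Phi(1)$, which is guaranteed precisely by the finite-dimensional analogue of the uniqueness just proved. Uniform bounds on $\nabla_y v_1^m$, combined with the mean relation obtained by integrating over $\mathcal{Y}$ (the source being a $y$-divergence yields $\partial_\tau\int_\mathcal{Y}\frac{a_1}{b_1}v_1\,\mathrm{d}y=0$, pinning the averaged mode) and the $\mathcal{Q}$-average normalization for the residual constant, let one pass to the limit. An equivalent route, parallel to the treatment of the Fick corrector in \cite{AAB2017}, is to add a vanishing regularization $\lambda v_1$ to restore coercivity, solve the regularized non-symmetric problem, and send $\lambda\to0$, identifying the limit via the uniqueness above. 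In either approach the crux is the lack of coercivity of the parabolic cell operator, which rules out a one-shot variational solution and forces an approximation-plus-compactness argument underpinned by uniqueness.
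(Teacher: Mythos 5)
Your proposal is correct, and its uniqueness half coincides in substance with the paper's own (one-line) proof of Corollary~\ref{c:b=2a1}: the paper's remark that uniqueness ``follows by standard energy estimates and Young inequality, taking into account the linearity of the problem and the periodicity of $v_1$'' unpacks to exactly your computation --- test the difference $w$ of two solutions with itself, use that the coefficient $a_1/b_1$ is $\tau$-independent so that $\int_\mathcal{Q}\frac{a_1}{b_1}(\partial_\tau w)\,w\,\textup{d}y\,\textup{d}\tau=0$ by $\tau$-periodicity, conclude $\nabla_y w=0$ from \eqref{ppro00-000} and \eqref{ppro00-500}, then $\partial_\tau w=0$, then $w=0$ by the normalization. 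One small correction there: you do not need \eqref{fas062} to legitimize testing with $w$, since any solution lies in $L^2(\Omega_T;\Hper(\mathcal{Q}))$ and $\Hper(\mathcal{Q})$ already carries $H^1$-regularity in $\tau$ by its definition \eqref{d:space}; \eqref{fas062} concerns only the particular corrector produced in Theorem~\ref{t:b=2aperv}, not an arbitrary solution. Where you genuinely diverge from the paper is existence: the paper does not prove it inside the corollary at all. For the limit $v$ it is part of Theorem~\ref{t:b=2aperv}, and for an arbitrary $v\in L^2(0,T;H^1_0(\Omega))$ it is obtained via the factorization \eqref{fas211} of Theorem~\ref{t:b=2afact}, with the cell functions $\chi^j$, $\zeta$ solving \eqref{fas213}, \eqref{fas215}, whose unique solvability is cited from classical theory \cite[Chapter~1, Section~2.2]{lions}. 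Your Galerkin-in-$y$ scheme with $\tau$-periodic ODEs is, in effect, a self-contained re-proof of that classical result: more work, but it makes the corollary independent of the factorization and correctly isolates the structural reasons for solvability (divergence-form right-hand side, $\tau$-independent weight in the time term). One caveat in your sketch should be fixed: if the constant function belongs to the Galerkin space, $I-\Phi(1)$ is \emph{not} invertible --- the constant mode contributes the Floquet multiplier $1$ --- so ``uniqueness implies invertibility'' holds only after quotienting out constants, or by the Fredholm alternative combined with the automatic compatibility condition (the source is a $y$-divergence weighted by $1/a_2$, hence orthogonal to the kernel); your remark about pinning the averaged mode and fixing the residual constant by the $\mathcal{Q}$-average points in the right direction and should be promoted to the formal argument. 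Likewise, note that the $\lambda$-regularization alternative does not restore coercivity in the full $\Hper(\mathcal{Q})$-norm (the form never controls $\|\partial_\tau v_1\|_{L^2}$), so it too ultimately requires a Galerkin or Lions-type construction rather than a one-shot Lax--Milgram.
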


\begin{proof}
The proof of the uniqueness 
follows by standard energy estimates and Young inequality, taking into account the
linearity of the problem and the periodicity of $v_1$.
\end{proof}

\begin{theorem}
\label{t:b=2afact}
In the same hypotheses of Theorem~\ref{t:b=2aperv},
the corrector $v_1$ can be written in the factored form
\begin{equation}
\label{fas211}
v_1(x,t,y,\tau)
=
-\chi^j(x,t,y,\tau)\frac{\partial v}{\partial x_j}(x,t)
-\zeta(x,t,y,\tau)v(x,t)\,,
\end{equation}
where the cell functions $\chi^j$, $j=1,\dots,n$, and $\zeta$
are $\mathcal{Q}$--periodic, with null mean average over $\mathcal{Q}$,
and
are the unique solutions of
\begin{equation}
\label{fas213}
\frac{a_1}{b_1}
\frac{\partial \chi^j}{\partial\tau}
-
\frac{1}{a_2}
\emph{div}_y
\big(
     b_2 B \nabla_y(\chi^j-y_j)
\big)
=0
\end{equation}
and
\begin{equation}
\label{fas215}
\frac{a_1}{b_1}
\frac{\partial \zeta}{\partial\tau}
-
\frac{1}{a_2}
\emph{div}_y
\big(
     b_2 B \nabla_y\zeta
\big)
+
\frac{1}{a_2}
\emph{div}_y
\Big(
     B\Big(
           \nabla b_2 +\omega_{\alpha,1} \nabla_y\frac{b}{b_1}
      \Big)
\Big)
=0
\,.
\end{equation}
Moreover,
the system \eqref{fas070} and \eqref{fas075} can be written as the
single scale equation
\begin{equation}
\label{fas200}
q_{\emph{eff}}
\frac{\partial v}{\partial t}
-\emph{div}(B_{\emph{hom}}\nabla v)
+P_{\emph{eff}}\cdot\nabla v
+z_{\emph{eff}} v
=
f
\int_{\mathcal{S}}
\frac{\emph{d}\tau}{a_2}\,,
\end{equation}
where
\begin{align}
q_{\emph{eff}}
&
=
\int_{\mathcal{Y}}
\frac{a_1}{b_1}
\,\mathrm{d}y\,,
\label{fas205}
\\
B_{\emph{hom}}^{ij}
&
=
\int_{\mathcal{Q}}
\frac{b_2}{a_2}
B^{ik}
\partial_k(y^j-\chi^j)
\,\mathrm{d}y\,\mathrm{d}\tau
\nonumber
\\
&
\phantom{mmmmm}
=
\int_{\mathcal{Q}}
\frac{b_2}{a_2}
B^{\ell k}
\partial_k(y^j-\chi^j)
\partial_\ell(y^i-\chi^i)
\,\mathrm{d}y\,\mathrm{d}\tau
+
\int_{\mathcal{Q}}
\frac{a_1}{b_1}
\frac{\partial\chi^j}{\partial\tau}\chi^i
\,\mathrm{d}y\,\mathrm{d}\tau\,,
\label{fas210}
\\
P_{\emph{eff}}
&
=\!\!
\int_{\mathcal{Q}}
\Big[
\frac{b_2}{a_2}
B
\nabla_y\zeta
+
b_2
(
B
\nabla_y(y-\chi)
)^\dag
\nabla\frac{1}{a_2}
-
      \frac{1}{a_2}
B
      \nabla b_2
-
      \omega_{\alpha,1}
B
\nabla_y\Big(\frac{b}{a_2b_1}\Big)
\Big]
\,\mathrm{d}y\,\mathrm{d}\tau,
\label{fas220}
\\
z_{\emph{eff}}
&
=
     \int_{\mathcal{Y}}
      a_1\frac{\partial}{\partial t}\frac{1}{b_1}
      \,\mathrm{d}y
      +
     \int_{\mathcal{Q}}
      \Big[
      \emph{div}
      \Big(
      \frac{B b_2}{a_2}
      \nabla_y\zeta
      \Big)
\nonumber
\\
&
\phantom{mmmmmmmm}
      -\frac{1}{a_2}
      \emph{div}
      \Big(
      B
      \Big(
      \nabla b_2
      +\omega_{\alpha,1}
       \nabla_y\frac{b}{b_1}
      \Big)
      \Big)
      -
     b_2 B
     \nabla_y\zeta
     \cdot
     \nabla\frac{1}{a_2}
     \Big]
      \,\mathrm{d}y\, \mathrm{d}\tau
\,.
\label{fas230}
\end{align}
\end{theorem}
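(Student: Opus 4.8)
The plan is to exploit the linearity of the cell equation \eqref{fas075} in the corrector $v_1$, regarding $v$ and $\nabla v$ (which depend only on $(x,t)$) as given data, and then to average the macroscopic equation \eqref{fas070} over the cell $\mathcal{Q}$.

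First, for the factorization, I would insert the ansatz \eqref{fas211} into \eqref{fas075}. Since $v$ and each $\partial v/\partial x_j$ are independent of the microscopic variables $(y,\tau)$, the operators $\partial_\tau$ and $\mathrm{div}_y$ fall only on $\chi^j$ and $\zeta$, so that $\nabla_y v_1=-\nabla_y\chi^j\,\partial_{x_j}v-\nabla_y\zeta\,v$ and $\partial_\tau v_1=-\partial_\tau\chi^j\,\partial_{x_j}v-\partial_\tau\zeta\,v$. Using $\nabla v-\nabla_y\chi^j\,\partial_{x_j}v=\nabla_y(y_j-\chi^j)\,\partial_{x_j}v$ and collecting in \eqref{fas075} the coefficient of each $\partial_{x_j}v$ reproduces exactly \eqref{fas213}, while collecting the coefficient of $v$ reproduces \eqref{fas215}; here it is essential that $a_2$ and $b_2$ do not depend on $y$. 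Since these cell problems share the parabolic structure of \eqref{fas075}, existence and uniqueness of the $\mathcal{Q}$-periodic, mean-zero solutions $\chi^j,\zeta$ follow from the energy argument of Corollary~\ref{c:b=2a1}.

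Next, to obtain the single-scale equation, I would substitute \eqref{fas211} into \eqref{fas070} and integrate over $\mathcal{Q}$. The time-derivative term $\int_\mathcal{Q}a_1\,\partial_t(v/b_1)$ splits, using $\tau$-independence of $a_1,b_1$, into $q_{\mathrm{eff}}\,\partial_t v$ with $q_{\mathrm{eff}}=\int_\mathcal{Y}a_1/b_1$ and the contribution $\big(\int_\mathcal{Y}a_1\,\partial_t(1/b_1)\big)v$ forming the first term of $z_{\mathrm{eff}}$. In the flux I would isolate the diffusive part $b_2 B\nabla_y(y_j-\chi^j)\,\partial_{x_j}v$ from the $v$-proportional part $B\nabla b_2-b_2 B\nabla_y\zeta+\omega_{\alpha,1}B\nabla_y(b/b_1)$, and then use $\tfrac{1}{a_2}\mathrm{div}\,G=\mathrm{div}(\tfrac{1}{a_2}G)-G\cdot\nabla\tfrac{1}{a_2}$ to pull $1/a_2$ through the macroscopic divergence (which commutes with $\int_\mathcal{Q}$, the cell variables being independent of $x$). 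The diffusive part then yields $-\mathrm{div}(B_{\mathrm{hom}}\nabla v)$ with $B_{\mathrm{hom}}$ as in the first line of \eqref{fas210}; grouping all remaining contributions proportional to $\nabla v$ produces the drift $P_{\mathrm{eff}}\cdot\nabla v$ of \eqref{fas220}, and those proportional to $v$ produce $z_{\mathrm{eff}}v$ as in \eqref{fas230}, while the source passes directly to $f\int_\mathcal{S}a_2^{-1}$.

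Finally, for the symmetric representation of $B_{\mathrm{hom}}$ in the second line of \eqref{fas210}, I would test \eqref{fas213} against $\chi^i$ and integrate over $\mathcal{Q}$: integration by parts in $y$ (periodicity kills the boundary terms, and $1/a_2$ passes through since it is $y$-independent) gives the identity $\int_\mathcal{Q}\tfrac{a_1}{b_1}\partial_\tau\chi^j\,\chi^i=\int_\mathcal{Q}\tfrac{b_2}{a_2}B\nabla_y(y_j-\chi^j)\cdot\nabla_y\chi^i$, and substituting it into the expansion of $B\nabla_y(y_j-\chi^j)\cdot\nabla_y(y_i-\chi^i)$ converts the first-line formula into the symmetrized one. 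I expect the main obstacle to be the bookkeeping in the second step: sorting the many terms into the four effective coefficients, and correctly tracking the macroscopic derivatives that land on the $x$-dependent cell functions $\chi^j,\zeta$ inside $P_{\mathrm{eff}}$ and $z_{\mathrm{eff}}$.
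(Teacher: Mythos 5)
Your proposal is correct and follows essentially the same route as the paper's proof: the paper likewise obtains \eqref{fas213}--\eqref{fas215} by substituting the ansatz \eqref{fas211} into \eqref{fas075} and verifying by ``a standard computation,'' derives \eqref{fas200} by inserting \eqref{fas211} into \eqref{fas070} with the same algebraic bookkeeping you describe, and proves the second line of \eqref{fas210} by exactly your device of testing \eqref{fas213} against $\chi^i$, integrating by parts in $y$ over $\mathcal{Q}$, and adding the resulting identity to the first-line formula. The only minor divergence is that for the well-posedness of the cell problems the paper appeals to the classical results of \cite[Chapter~1, Section~2.2]{lions}, whereas your appeal to the energy argument of Corollary~\ref{c:b=2a1} settles uniqueness but not, by itself, the existence of the $\mathcal{Q}$-periodic mean-zero solutions $\chi^j,\zeta$, for which the classical reference is the cleaner route.
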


\par\noindent
\textit{Proof}
We first note that, by classical results
(see, i.e., \cite[Chapter~1, Section~2.2]{lions}),
equations \eqref{fas213} and \eqref{fas215}
admit a unique $\mathcal{Q}$--periodic solution with
null mean average.
Then, a standard computation shows that $v_1$ defined
in \eqref{fas211} satisfies \eqref{fas075}.
Finally, inserting \eqref{fas211} into \eqref{fas070} and performing
some algebraic computations we get equation \eqref{fas200}.

In particular, the second equality in \eqref{fas210} can be obtained
as follows. We first note that
\begin{displaymath}
B_{\emph{hom}}^{ij}
=
-\int_{\mathcal{Q}}
\frac{b_2}{a_2}
B^{\ell k}
\partial_k(\chi^j-y^j)
\partial_\ell y^i
\,\mathrm{d}y\,\mathrm{d}\tau\,
\;.
\end{displaymath}
Moreover,
from \eqref{fas213}, we have that
\begin{displaymath}
\int_{\mathcal{Q}}
\frac{b_2}{a_2}
B^{\ell k}
\partial_k(\chi^j-y^j)
\partial_\ell\chi^i
\,\mathrm{d}y\,\mathrm{d}\tau\,
+
\int_{\mathcal{Q}}
\frac{a_1}{b_1}
\frac{\partial\chi^j}{\partial\tau}\chi^i
\,\mathrm{d}y\,\mathrm{d}\tau\,
=0
\;.
\end{displaymath}
By summing the two equations above we get \eqref{fas210}.
\qed

\begin{corollary}
\label{c:b=2a2}
In the same hypotheses of Theorem~\ref{t:b=2aperv},
equation \eqref{fas200},
complemented with the boundary and initial conditions
\eqref{fas080} and \eqref{fas090},
and
the two--scale problem
\eqref{fas070}--\eqref{fas090}
admit a unique solution.
\end{corollary}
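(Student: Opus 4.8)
The plan is to reduce the uniqueness of \emph{both} problems to the uniqueness of the single scale equation \eqref{fas200}, and then to establish the latter by a standard parabolic energy argument, whose only non-routine ingredient is the coercivity of $B_{\emph{hom}}$. First I would record the reduction. By Corollary~\ref{c:b=2a1}, once the macroscopic profile $v$ is fixed, the cell equation \eqref{fas075} determines the corrector $v_1$ uniquely among the zero--mean functions of $L^2(\Omega_T;\Hper(\mathcal{Q}))$; moreover Theorem~\ref{t:b=2afact} shows that this unique $v_1$ has the factored form \eqref{fas211} and that the profile then solves \eqref{fas200} together with \eqref{fas080}--\eqref{fas090}. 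Hence a solution of the two--scale problem \eqref{fas070}--\eqref{fas090} is completely encoded by its $v$--component, and conversely any solution of \eqref{fas200} produces, via \eqref{fas211} and Corollary~\ref{c:b=2a1}, a solution of the two--scale system. Consequently it suffices to show that \eqref{fas200} with \eqref{fas080}--\eqref{fas090} has at most one solution $v\in L^2(0,T;H^1_0(\Omega))$ with $\partial_t v\in L^2(0,T;H^{-1}(\Omega))$.

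The key point is the coercivity of $B_{\emph{hom}}$. Starting from the second expression in \eqref{fas210}, I would contract with an arbitrary $\xi\in\mathbb{R}^n$ and set $\chi^\xi=\chi^j\xi_j$, obtaining
\[
B_{\emph{hom}}^{ij}\xi_i\xi_j
=
\int_{\mathcal{Q}}
\frac{b_2}{a_2}
B^{\ell k}
\partial_k(y^j\xi_j-\chi^\xi)
\partial_\ell(y^i\xi_i-\chi^\xi)
\,\mathrm{d}y\,\mathrm{d}\tau
+
\int_{\mathcal{Q}}
\frac{a_1}{b_1}
\frac{\partial\chi^\xi}{\partial\tau}
\chi^\xi
\,\mathrm{d}y\,\mathrm{d}\tau\,.
\]
The second integral equals $\tfrac12\int_{\mathcal{Y}}\frac{a_1}{b_1}\big(\int_{\mathcal{S}}\partial_\tau(\chi^\xi)^2\,\mathrm{d}\tau\big)\,\mathrm{d}y$, and it \emph{vanishes}: since $a_1/b_1$ is independent of $\tau$, it factors out of the inner integral, which is zero by $\mathcal{S}$--periodicity of $\chi^\xi$. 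By the ellipticity \eqref{ppro00-000} of $B$ and the bounds \eqref{ppro00-500} on $b_2,a_2$, the surviving term is bounded below by $C^{-1}\int_{\mathcal{Q}}|\xi-\nabla_y\chi^\xi|^2$; and since $\int_{\mathcal{Q}}\xi\cdot\nabla_y\chi^\xi=0$ by $\mathcal{Y}$--periodicity, this equals $C^{-1}\big(|\xi|^2+\int_{\mathcal{Q}}|\nabla_y\chi^\xi|^2\big)\ge C^{-1}|\xi|^2$. Thus $B_{\emph{hom}}$ is uniformly elliptic. I would also note, from \eqref{fas205} and \eqref{ppro00-500}, that $C^{-1}\le q_{\emph{eff}}\le C$, while the Lipschitz continuity of the data controls $\partial_t q_{\emph{eff}}$ and guarantees that $P_{\emph{eff}}$ and $z_{\emph{eff}}$ in \eqref{fas220}--\eqref{fas230} are bounded, thanks to the regularity of the cell solutions $\chi^j,\zeta$.

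With these bounds, uniqueness for \eqref{fas200} is routine. Given two solutions, their difference $w$ solves the homogeneous equation with zero boundary and initial data; testing with $w$ and writing $\int_\Omega q_{\emph{eff}}\,\partial_t w\,w=\frac{\mathrm{d}}{\mathrm{d}t}\int_\Omega\frac{q_{\emph{eff}}}{2}w^2-\int_\Omega\frac{\partial_t q_{\emph{eff}}}{2}w^2$, one obtains
\[
\frac{\mathrm{d}}{\mathrm{d}t}\int_\Omega\frac{q_{\emph{eff}}}{2}w^2
+
\int_\Omega B_{\emph{hom}}\nabla w\cdot\nabla w
=
\int_\Omega\Big(\frac{\partial_t q_{\emph{eff}}}{2}-z_{\emph{eff}}\Big)w^2
-
\int_\Omega (P_{\emph{eff}}\cdot\nabla w)\,w\,.
\]
Using the coercivity of $B_{\emph{hom}}$ on the left and absorbing the $P_{\emph{eff}}$ term by Young's inequality, a Gronwall argument forces $w\equiv0$, so $v$ is unique; by the reduction above, so is the pair $(v,v_1)$. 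Existence being already provided by Theorem~\ref{t:b=2aperv}, this proves the corollary. The only genuinely delicate step is the coercivity of $B_{\emph{hom}}$, and specifically the observation that the additional term $\int_{\mathcal{Q}}\frac{a_1}{b_1}\frac{\partial\chi^j}{\partial\tau}\chi^i\,\mathrm{d}y\,\mathrm{d}\tau$ drops out of the quadratic form $B_{\emph{hom}}^{ij}\xi_i\xi_j$ by virtue of the $\tau$--independence of $a_1/b_1$ together with $\mathcal{S}$--periodicity; note that symmetry of $B_{\emph{hom}}$ is not needed, only its coercivity.
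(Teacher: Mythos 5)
Your proposal is correct and follows essentially the same route as the paper: reduce uniqueness of the two--scale problem to uniqueness for \eqref{fas200} via Corollary~\ref{c:b=2a1} and the factorization \eqref{fas211}, then run a standard energy/Gronwall/Young argument after checking that the $\tau$--derivative contribution $\int_{\mathcal{Q}}\frac{a_1}{b_1}\frac{\partial\chi^j}{\partial\tau}\chi^i\,\mathrm{d}y\,\mathrm{d}\tau$ does not obstruct coercivity. Your direct computation that this term's quadratic form vanishes (via $\tau$--independence of $a_1/b_1$ and $\mathcal{S}$--periodicity) is exactly the paper's observation that this part of $B_{\textup{hom}}$ is antisymmetric and hence disappears when contracted with $(v-\tilde v)_{x_i}(v-\tilde v)_{x_j}$, with the added merit that you make the ``standard calculations'' for positive definiteness of the symmetric part explicit.
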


\begin{proof}
First we note that
the matrix $B_\text{hom}$ in \eqref{fas210}
is made of two parts, the first one is symmetric and by standard
calculations it is also positive definite.
On the other hand,
the second part, which is due to the presence
of the derivative with respect to the microscopic time $\tau$ in the parabolic
equation \eqref{fas213} for the cell functions $\chi^j$,
is antisymmetric.

However,
the uniqueness for equation \eqref{fas200},
complemented with \eqref{fas080} and \eqref{fas090},
still follows by standard energy estimates, Gronwall and
Young inequalities, taking into account that
the antisymmetric part of the homogenized matrix $B_\textup{hom}$
disappears in the energy estimate.
Indeed, it is multiplied by the symmetric matrix
$(v-\tilde{v})_{x_i}(v-\tilde{v})_{x_j}$, where $v$ and $\tilde{v}$
are two different solutions of \eqref{fas200}.
Thus, the estimation can be performed as usual.

To prove uniqueness for the problem
\eqref{fas070}--\eqref{fas090},
we assume that there exist two solutions
$(v,v_1)$ and $(\tilde{v},\tilde{v}_1)$.
From Corollary~\ref{c:b=2a1}
and
Theorem~\ref{t:b=2afact}
it follows that $v_1$ and $\tilde{v}_1$
are given as in \eqref{fas211}
for $v$ and $\tilde{v}$, respectively.
By substituting these two representations of $v_1$ and $\tilde{v}_1$
in \eqref{fas070}, it follows that both $v$ and $\tilde{v}$
satisfy \eqref{fas200}. Thus, by uniqueness of the solution of
\eqref{fas200}, we have that $v=\tilde{v}$ and, therefore,
we also have $v_1=\tilde{v}_1$.
\end{proof}

\begin{remark}
\label{r:simBhom}
Notice that the antisymmetric term disappears in the homogenized
matrix \eqref{fas210} under some additional assumptions.
For instance, when the matrix $B$ and the coefficients
$a_1$, $a_2$, $b_1$, and $b_2$ do not depend on the macroscopic
space variable $x$ \cite{lions}.
\end{remark}

\begin{theorem}
\label{t:b>2aperv}
Let $\beta>2\alpha$.
Then, there exist
$v\in L^2(0,T;H^1_0(\Omega))$
and
$v_1\in L^2(\Omega_T;\Hper(\mathcal{Q}))$,
with
$\int_\mathcal{Q} v_1
\,\emph{d}y\,\emph{d}\tau=0$, such that \eqref{fas030}--\eqref{fas060} hold
and
\begin{equation}
\label{fas>062}
\frac{\partial v_1}{\partial\tau}
=0.
\end{equation}
Moreover,
the pair $(v,v_1)$ is a weak solution
of the two--scale problem \eqref{fas070}, \eqref{fas080}, \eqref{fas090}, complemented with the microscale equation
\begin{multline}
  \label{fas>075}
  \emph{div}_y
  \Big[
  \mathcal{M_\mathcal{S}}\Big(\frac{B b_2}{a_2}\Big)
  (\nabla v+\nabla_y v_1)
  \\
  \phantom{mmm}
      +v
       \mathcal{M_\mathcal{S}}\Big(\frac{B \nabla b_2}{a_2}\Big)
      +\omega_{\alpha,1}v
       \mathcal{M_\mathcal{S}}\Big(\frac{B}{a_2}
                      \nabla_y\Big(\frac{b}{b_1}\Big)
                      \Big)
                      \Big]
                      =0\,,
                      \qquad
                      \emph{ in } \Omega_T\times\mathcal{Y}\,.
\end{multline}
\end{theorem}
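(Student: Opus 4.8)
The plan is to mirror the proof of Theorem~\ref{t:b=2aperv}, whose structure carries over almost verbatim; the two genuinely new ingredients are the degeneracy \eqref{fas>062} of the corrector in the microscopic time and the appearance of the microscopic time averages $\mathcal{M}_\mathcal{S}$ in the cell equation \eqref{fas>075}, both of which stem from the faster time oscillation $\beta>2\alpha$. First I would collect the compactness: boundedness of $v_\varepsilon$ in $L^2(0,T;H^1_0(\Omega))\cap L^\infty(0,T;L^2(\Omega))$ from Lemma~\ref{t:enest} gives \eqref{fas030} and \eqref{fas040}, while \eqref{fas050} and \eqref{fas060} follow from \eqref{fas140} and \cite[Proposition~2.12, Theorem~2.18]{AAB2017}, now with the parameter $m=\alpha/\beta$ (replacing $\tau$ by $\varepsilon^\beta$ and $\varepsilon$ by $\varepsilon^\alpha$), which in the present regime satisfies $m<1/2$. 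The decisive point is that, since $\beta>2\alpha$, the right-hand side of \eqref{fas140} is dominated by $\gamma/\varepsilon^\beta$ (recall $\|\nabla\bar v_\varepsilon\|_2^2=O(\varepsilon^{-2\alpha})$), so that $\partial_\tau\mathcal{T}_\varepsilon(v_\varepsilon)=\varepsilon^\beta\mathcal{T}_\varepsilon(\partial v_\varepsilon/\partial t)$ is of order $\varepsilon^{\beta/2}$; extracting the corrector at order $\varepsilon^\alpha$, its $\tau$-derivative is then of order $\varepsilon^{\beta/2-\alpha}\to0$. This is exactly the statement of \cite[Theorem~2.18]{AAB2017} for $m<1/2$ and gives \eqref{fas>062}, so that $v_1=v_1(x,t,y)$.

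Next I would derive the macroscopic equation, which is identical to the one in Theorem~\ref{t:b=2aperv}. Testing \eqref{fas023} with $\phi_\varepsilon(x,t)=\varphi(x,t)/a_2(x,t,t/\varepsilon^\beta)$, where $\varphi\in\mathcal{C}^\infty(\overline\Omega_T)$ satisfies $\varphi(x,T)=0$ and $\varphi=0$ on $\partial\Omega\times[0,T]$, and unfolding, I obtain the analogue of \eqref{pfas000}. Since $a_1^\varepsilon a_2^\varepsilon\phi_\varepsilon=a_1^\varepsilon\varphi$ and $a_1$ does not depend on $\tau$, the factor acted on by the time derivative is free of the fast scale, so that term is harmless in the limit; using \eqref{fas050}, \eqref{fas060} and the splitting \eqref{pfas010}, letting $\varepsilon\to0$ reproduces \eqref{pfas030}, i.e.\ the weak form of \eqref{fas070} together with \eqref{fas080} and \eqref{fas090}. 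No change occurs here, because the macroscopic test function does not resolve the fast time scale.

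The cell equation \eqref{fas>075} is where the fast regime departs from Theorem~\ref{t:b=2aperv}. I would test \eqref{fas023} with $\phi_\varepsilon(x,t)=\varepsilon^\alpha\big(\varphi(x,t)/a_2(x,t,t/\varepsilon^\beta)\big)\psi(x/\varepsilon^\alpha)$, with $\psi\in\Hper(\mathcal{Y})$ depending on the spatial microvariable alone, a choice legitimated by \eqref{fas>062}. Because $\partial v_\varepsilon/\partial t$ is now only of order $\varepsilon^{-\beta/2}$, keeping the derivative on $v_\varepsilon$ as in \eqref{pfas050} would produce the unbounded factor $\varepsilon^\alpha\mathcal{T}_\varepsilon(\partial v_\varepsilon/\partial t)=O(\varepsilon^{\alpha-\beta/2})$; I would therefore use \eqref{fas023} as it stands, with the time derivative on the test function. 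Since $a_1^\varepsilon\varphi\psi(x/\varepsilon^\alpha)$ is $\tau$-independent, its time derivative is bounded and the whole time-derivative contribution, carrying the prefactor $\varepsilon^\alpha$, tends to zero (as does the $t=0$ boundary term, bounded by $\varepsilon^\alpha\|\bar v_\varepsilon\|_2$ via \eqref{fas022}). In the diffusion term, $\nabla\phi_\varepsilon$ has leading part $(\varphi/a_2^\varepsilon)\varepsilon^{-\alpha}(\nabla_y\psi)(x/\varepsilon^\alpha)$, which balances the $\varepsilon^\alpha$ prefactor; unfolding, invoking \eqref{fas060} and \eqref{pfas010}, and integrating the $\tau$-dependent coefficients over $\mathcal{S}$ turns them into $\mathcal{M}_\mathcal{S}(Bb_2/a_2)$, $\mathcal{M}_\mathcal{S}(B\nabla b_2/a_2)$ and $\omega_{\alpha,1}\mathcal{M}_\mathcal{S}((B/a_2)\nabla_y(b/b_1))$, the factor $\omega_{\alpha,1}$ arising, as in \eqref{pfas010}, from the term $\varepsilon^{1-\alpha}\nabla_y(b^\varepsilon/b_1^\varepsilon)$ that survives only for $\alpha=1$. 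By the arbitrariness of $\varphi$ and $\psi$ this is the weak form of \eqref{fas>075} on $\Omega_T\times\mathcal{Y}$.

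The main obstacle, and the only place where the proof is not a transcription of Theorem~\ref{t:b=2aperv}, is the handling of the time-derivative term under the degraded bound $\|\partial v_\varepsilon/\partial t\|_2=O(\varepsilon^{-\beta/2})$. Its resolution is twofold: on the one hand \cite[Theorem~2.18]{AAB2017} in the regime $m<1/2$ kills the $\tau$-dependence of $v_1$, and on the other hand integrating by parts in time in the cell test function removes the derivative from $v_\varepsilon$. Verifying the exact powers of $\varepsilon$ — that both the time-derivative contribution and the initial-time boundary term are $O(\varepsilon^\alpha)$, while the diffusion term stays at order one — is the delicate bookkeeping on which the argument rests.
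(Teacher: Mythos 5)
Your proposal follows the paper's proof essentially verbatim: the same compactness from Lemmas \ref{t:enest} and \ref{t:vt}, the same macroscopic test function and limit passage as in the case $\beta=2\alpha$, and — crucially — the same device for the cell equation, namely testing \eqref{fas023} with $\phi_\varepsilon=\varepsilon^\alpha\big(\varphi/a_2(x,t,t/\varepsilon^\beta)\big)\psi(x/\varepsilon^\alpha)$, $\psi\in\Hper(\mathcal{Y})$, while leaving the time derivative on the test function so that the $O(\varepsilon^{\alpha-\beta/2})$ obstruction never arises and the $\tau$-averages $\mathcal{M}_\mathcal{S}$ emerge in \eqref{fas>075}. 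The only discrepancy is bibliographic: the paper derives \eqref{fas060} and \eqref{fas>062} from \cite[Theorem~2.16]{AAB2017} with $m=1/2$ (with $\tau$ replaced by $\varepsilon^\beta$ and $\varepsilon$ by $\varepsilon^\alpha$, using $\beta>2\alpha$), not from \cite[Theorem~2.18]{AAB2017} with $m=\alpha/\beta$; since \eqref{fas140} gives $\|\partial v_\varepsilon/\partial t\|_2=O(\varepsilon^{-\beta/2})$, i.e.\ $m=1/2$ relative to the time period $\varepsilon^\beta$ — consistent with your own heuristic $\varepsilon^{\beta/2-\alpha}\to 0$ — this does not affect the substance of the argument.
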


\par\noindent
\textit{Proof}
The convergence results in \eqref{fas030} and \eqref{fas040}
still
follow from \eqref{fas100};
\eqref{fas050} follows from \eqref{fas140}
and
\cite[Proposition~2.12]{AAB2017},
with $m=1/2$
and
by
replacing
$\tau$ with $\varepsilon^{\beta}$.
Finally,
\eqref{fas060}
and
\eqref{fas>062}
follow from
\eqref{fas140}
and
\cite[Theorem~2.16]{AAB2017},
with $m=1/2$,
by
replacing
$\tau$ with $\varepsilon^{\beta}$,
$\varepsilon$ with $\varepsilon^\alpha$
and
taking into account that $\beta>2\alpha$.

The proof of \eqref{fas070} and \eqref{fas090}
is exactly the same as
in the case $\beta=2\alpha$.

In order to prove \eqref{fas>075}, we
take into account \eqref{fas>062} and choose
the test function
$\phi_\varepsilon(x,t)=
  \varepsilon^\alpha(\varphi(x,t)/a_2(x,t,t/\varepsilon^{\beta}))
                     \psi(x/\varepsilon^\alpha)$,
where $\varphi\in \mathcal{C}^\infty(\overline\Omega_T)$ with $\varphi=0$ on
$\partial\Omega\times[0,T]$, and $\psi\in \Hper(\mathcal{Y})$,
as test function in \eqref{fas023}. We unfold the resulting
equation and obtain
\begin{multline}
\label{pfas>050}
-
 \varepsilon^\alpha
 \int_0^T\int_\Omega
 \int_\mathcal{Q}
    \mathcal{T}_\varepsilon(v_\varepsilon)
    \mathcal{T}_\varepsilon\Big(\frac{1}{b^\varepsilon_1}\Big)
\Big(
     \mathcal{T}_\varepsilon\Big(\frac{\partial a^\varepsilon_1}
                                      {\partial t}\Big)
     \mathcal{T}_\varepsilon(\varphi\psi)
     +
     \mathcal{T}_\varepsilon(a^\varepsilon_1)
     \mathcal{T}_\varepsilon(\psi)
     \mathcal{T}_\varepsilon\Big(\frac{\partial \varphi}
                                      {\partial t}\Big)
\Big)
\,\textrm{d}y\,\textrm{d}\tau
\,\textrm{d}x\,\textrm{d}t
\\
+
\varepsilon^\alpha
\int_0^T\int_\Omega
\int_\mathcal{Q}
\mathcal{T}_\varepsilon(B^\varepsilon)
\Big(
     \mathcal{T}_\varepsilon(v_\varepsilon)
     \mathcal{T}_\varepsilon\Big(
           \nabla_xb^\varepsilon_2
           +\varepsilon\nabla_x\Big(\frac{b^\varepsilon}
                                         {b^\varepsilon_1}\Big)
           +\varepsilon^{1-\alpha}
                      \nabla_y\Big(\frac{b^\varepsilon}
                                        {b^\varepsilon_1}\Big)
                            \Big)
     +
     \mathcal{T}_\varepsilon\Big(b^\varepsilon_2
                                 +\varepsilon\frac{b^\varepsilon}
                                                  {b^\varepsilon_1}\Big)
     \mathcal{T}_\varepsilon(\nabla v_\varepsilon)
\Big)
\\
\cdot
\Big(
     \mathcal{T}_\varepsilon\Big(\nabla\Big(\frac{\varphi}{a_2}\Big)\Big)
     \mathcal{T}_\varepsilon(\psi)
     +
     \mathcal{T}_\varepsilon\Big(\frac{\varphi}{a_2}\Big)
     \mathcal{T}_\varepsilon(\nabla_x\psi)
     +
     \frac{1}{\varepsilon^\alpha}
     \mathcal{T}_\varepsilon\Big(\frac{\varphi}{a_2}\Big)
     \mathcal{T}_\varepsilon(\nabla_y\psi)
\Big)
\,\textrm{d}y\,\textrm{d}\tau
\,\textrm{d}x\,\textrm{d}t
\\
=
\varepsilon^\alpha
\int_0^T\int_\Omega
f \frac{\varphi}{a_2} \psi
\,\textrm{d}x\,\textrm{d}t
+
\varepsilon^\alpha
\int_\Omega
\frac{\bar{v}_\varepsilon(x)}{b^\varepsilon_1(x,0)}
a^\varepsilon_1(x,0)
\varphi(x,0)
\psi\Big(\frac{x}{\varepsilon^\alpha}\Big)
\,\textrm{d}x
+R_\varepsilon
\;,
\end{multline}
where $R_\varepsilon\to0$ for $\varepsilon\to0$.
Now, passing to the limit $\varepsilon\to0$
and taking into account \eqref{fas060},
we get
\begin{multline}
\label{pfas>060}
\int_0^T\int_\Omega
\int_\mathcal{Q}
\frac{1}{a_2}
B
\Big(
     v
     \Big(\nabla b_2+\omega_{\alpha,1}
          \nabla_y\Big(\frac{b}{b_1}\Big)
     \Big)
     +
     b_2
     (
      \nabla v
      +
      \nabla_y v_1
     )
\Big)
\cdot
     \varphi
     \nabla_y\psi
\,\textrm{d}y\,\textrm{d}\tau
\,\textrm{d}x\,\textrm{d}t
=
0
\;,
\end{multline}
which is the weak formulation of \eqref{fas>075}.
\qed

Notice that, by \eqref{fas>062}, actually $v_1\in \Hper(\mathcal{Y})$ with $\int_{\mathcal{Y}} v_1\,\textup{d}y=0$. Moreover, 
similarly to the case $\beta=2\alpha$ discussed above, we have the
following corollary.

\begin{corollary}
\label{c:b>2a1}
Given $v\in L^2(0,T;H^1_0(\Omega))$,
equation \eqref{fas>075} admits a unique solution
$v_1\in L^2(\Omega_T;\Hper(\mathcal{Y}))$
with
$\int_\mathcal{Y}v_1\,\textup{d}y=0$.
\end{corollary}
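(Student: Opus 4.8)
The key observation is that, by \eqref{fas>062}, the corrector $v_1$ is independent of the microscopic time $\tau$; hence \eqref{fas>075} is a \emph{purely elliptic} cell problem on $\mathcal{Y}$, parametrized by $(x,t)\in\Omega_T$, in contrast with the parabolic cell problem of Corollary~\ref{c:b=2a1}. This is precisely the structural simplification that lets both existence and uniqueness be read off from the Lax--Milgram theorem. The plan is therefore to recast \eqref{fas>075} in weak form: for a.e.\ fixed $(x,t)$, find $v_1\in\Hper(\mathcal{Y})$ with $\int_{\mathcal{Y}}v_1\,\textup{d}y=0$ such that
\[
\int_{\mathcal{Y}}
\mathcal{M}_{\mathcal{S}}\Big(\frac{B b_2}{a_2}\Big)\nabla_y v_1\cdot\nabla_y\psi
\,\textup{d}y
=
-\int_{\mathcal{Y}}
\Big[
\mathcal{M}_{\mathcal{S}}\Big(\frac{B b_2}{a_2}\Big)\nabla v
+v\,\mathcal{M}_{\mathcal{S}}\Big(\frac{B\nabla b_2}{a_2}\Big)
+\omega_{\alpha,1}v\,\mathcal{M}_{\mathcal{S}}\Big(\frac{B}{a_2}\nabla_y\Big(\frac{b}{b_1}\Big)\Big)
\Big]\cdot\nabla_y\psi
\,\textup{d}y
\]
for every $\psi\in\Hper(\mathcal{Y})$, the data $v$ and $\nabla v$ being given.

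First I would verify that the averaged diffusion matrix $\mathcal{M}_{\mathcal{S}}(B b_2/a_2)$ is symmetric and uniformly positive definite: since $B$ is symmetric and $a_2,b_2>0$, symmetry is inherited, while \eqref{ppro00-000} and \eqref{ppro00-500} give, for every $\xi\in\mathbb{R}^n$,
\[
\mathcal{M}_{\mathcal{S}}\Big(\frac{B b_2}{a_2}\Big)\xi\cdot\xi
=
\int_{\mathcal{S}}\frac{b_2}{a_2}\,B\xi\cdot\xi\,\textup{d}\tau
\ge
C^{-3}|\xi|^2\,.
\]
Coercivity of the bilinear form on the closed subspace of zero-mean functions in $\Hper(\mathcal{Y})$ then follows by combining this bound with the Poincar\'e--Wirtinger inequality for $\mathcal{Y}$-periodic functions of vanishing mean; boundedness of the bilinear form and of the right-hand side functional is immediate from the $L^\infty$ bounds on $B$, $a_2$, $b_2$, $b_1$, $b$ together with $v,\nabla v\in L^2$. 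The Lax--Milgram theorem then yields, for a.e.\ $(x,t)$, a unique zero-mean $v_1(x,t,\cdot)\in\Hper(\mathcal{Y})$, and uniqueness in this pointwise sense already gives the uniqueness asserted in the statement, exactly as in Corollary~\ref{c:b=2a1}.

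Finally, I would promote this pointwise solvability to the claimed global regularity. Because the Lax--Milgram solution operator is bounded with a constant depending only on the structural bounds, one has $\|v_1(x,t,\cdot)\|_{\Hper(\mathcal{Y})}\le\const\big(|v(x,t)|+|\nabla v(x,t)|\big)$ for a.e.\ $(x,t)$; squaring and integrating over $\Omega_T$, together with $v\in L^2(0,T;H^1_0(\Omega))$, gives $v_1\in L^2(\Omega_T;\Hper(\mathcal{Y}))$. I do not expect a genuine obstacle in this argument: the result is essentially standard elliptic cell-problem theory, and the only point requiring a little care is the measurability of $(x,t)\mapsto v_1(x,t,\cdot)$, which follows in the usual way from the linearity of the solution map and the measurability of the data.
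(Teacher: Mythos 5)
Your proof is correct, but it is organized rather differently from the paper's. In the paper, the corollary itself is only argued for uniqueness (``similarly to the case $\beta=2\alpha$''), i.e.\ by the same energy estimate that underlies your coercivity bound: test the difference of two solutions against itself, use the uniform ellipticity of $\mathcal{M}_{\mathcal{S}}(Bb_2/a_2)$ and the Poincar\'e--Wirtinger inequality on zero-mean $\mathcal{Y}$-periodic functions. Existence for an arbitrary $v\in L^2(0,T;H^1_0(\Omega))$ is instead obtained indirectly, through the factorization \eqref{fas>211} of Theorem~\ref{t:b>2afact}, $v_1=-\chi^j\,\partial v/\partial x_j-\zeta v$, where the solvability of the cell problems \eqref{fas>213} and \eqref{fas>215} is quoted from classical results \cite{lions}. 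You get existence and uniqueness in one stroke by Lax--Milgram for a.e.\ $(x,t)$, and your two key checks are sound: the lower bound $\mathcal{M}_{\mathcal{S}}(Bb_2/a_2)\xi\cdot\xi\ge C^{-3}|\xi|^2$ follows correctly from \eqref{ppro00-000} and \eqref{ppro00-500} since $|\mathcal{S}|=1$, and the divergence-form right-hand side is automatically compatible with the zero-mean constraint because constants are annihilated by $\nabla_y$ (so solving on the zero-mean subspace does yield a solution against all of $\Hper(\mathcal{Y})$). What your route buys is self-containedness and the explicit a.e.\ bound $\|v_1(x,t,\cdot)\|_{H^1(\mathcal{Y})}\le\const\,(|v(x,t)|+|\nabla v(x,t)|)$, which makes the global regularity $v_1\in L^2(\Omega_T;\Hper(\mathcal{Y}))$ an immediate consequence of $v\in L^2(0,T;H^1_0(\Omega))$ --- a point the paper leaves implicit; what the paper's route buys is that the factored form of $v_1$ is needed anyway to derive the single-scale equation \eqref{fas200}, so existence comes for free once the cell functions $\chi^j$ and $\zeta$ are in hand. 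Two minor points you correctly flag but could make fully explicit: passing from uniqueness a.e.\ in $(x,t)$ to uniqueness in $L^2(\Omega_T;\Hper(\mathcal{Y}))$ requires the standard localization with test functions $\varphi(x,t)\psi(y)$ and a countable dense family of $\psi$, and measurability of $(x,t)\mapsto v_1(x,t,\cdot)$ follows from continuity of the Lax--Milgram solution operator in the (Lipschitz) coefficients and the measurable data; neither is a gap.
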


\begin{theorem}
\label{t:b>2afact}
In the same hypotheses of Theorem~\ref{t:b>2aperv},
the corrector $v_1$ can be written in the factored form
\begin{equation}
\label{fas>211}
v_1(x,t,y)
=
-\chi^j(x,t,y)\frac{\partial v}{\partial x_j}(x,t)
-\zeta(x,t,y)v(x,t)
\end{equation}
where the cell functions $\chi^j$, $j=1,\dots,n$, and $\zeta$
are $\mathcal{Y}$--periodic, with null mean average over $\mathcal{Y}$,
and
are the unique solutions of
\begin{equation}
\label{fas>213}
\emph{div}_y
\Big(
     \mathcal{M_\mathcal{S}}\Big(\frac{b_2 B}{a_2}\Big) \nabla_y(\chi^j-y_j)
\Big)
=0
\end{equation}
and
\begin{equation}
\label{fas>215}
-
\emph{div}_y
\Big(
     \mathcal{M_\mathcal{S}}\Big(\frac{b_2 B}{a_2}\Big) \nabla_y\zeta
\Big)
+
\emph{div}_y
\mathcal{M_\mathcal{S}}
\Big(
     \frac{B}{a_2}
      \Big(
           \nabla b_2 +\omega_{\alpha,1} \nabla_y\frac{b}{b_1}
      \Big)
\Big)
=0
\,.
\end{equation}
Moreover,
the system \eqref{fas070} and \eqref{fas>075} can be written as the
single scale equation
\eqref{fas200},
where
$q_{\emph{eff}}$,
$P_{\emph{eff}}$,
and
$z_{\emph{eff}}$
are formally defined as in Theorem~\ref{t:b=2afact},
and
\begin{equation}
\label{fas>210}
B_{\emph{hom}}^{ij}
=
\int_{\mathcal{Q}}
\frac{b_2}{a_2}
B^{ik}
\partial_k(y^j-\chi^j)
\,\mathrm{d}y\,\mathrm{d}\tau\,
=
\int_{\mathcal{Q}}
\frac{b_2}{a_2}
B^{\ell k}
\partial_k(y^j-\chi^j)
\partial_\ell(y^i-\chi^i)
\,\mathrm{d}y\,\mathrm{d}\tau\,,
\end{equation}
with $\chi$ and $\zeta$ being the solutions of \eqref{fas>213}
and \eqref{fas>215}.
\end{theorem}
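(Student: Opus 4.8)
The plan is to mirror the argument of Theorem~\ref{t:b=2afact}, exploiting the crucial simplification \eqref{fas>062}: in the regime $\beta>2\alpha$ the corrector $v_1$ is independent of the microscopic time $\tau$, so that $v_1\in\Hper(\mathcal{Y})$. First I would establish that the cell problems \eqref{fas>213} and \eqref{fas>215} admit unique $\mathcal{Y}$-periodic solutions with null mean average over $\mathcal{Y}$. Unlike the parabolic cell equations \eqref{fas213}--\eqref{fas215} of the case $\beta=2\alpha$, these are now \emph{purely elliptic} in $y$, since the $\partial/\partial\tau$ term has dropped out. The effective matrix $\mathcal{M}_\mathcal{S}(b_2 B/a_2)$ inherits uniform ellipticity from \eqref{ppro00-000} and \eqref{ppro00-500}, so existence and uniqueness follow from the Lax--Milgram lemma on the quotient $\Hper(\mathcal{Y})/\mathbb{R}$, i.e.\ from the classical results in \cite[Chapter~1, Section~2.2]{lions}.

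Next I would verify that the factored ansatz \eqref{fas>211} solves the microscale equation \eqref{fas>075}. Since $v$ and $\partial v/\partial x_j$ do not depend on $y$, inserting \eqref{fas>211} and using $\nabla_y v_1 = -\nabla_y\chi^j\,\partial v/\partial x_j - v\,\nabla_y\zeta$ decouples \eqref{fas>075} into a part proportional to each $\partial v/\partial x_j$ and a part proportional to $v$. Requiring the former to vanish reproduces exactly \eqref{fas>213}, while requiring the latter to vanish gives \eqref{fas>215}, the terms $v\,\mathcal{M}_\mathcal{S}(B\nabla b_2/a_2)$ and the $\omega_{\alpha,1}$-correction being absorbed into the $\zeta$-equation. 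By Corollary~\ref{c:b>2a1}, this $v_1$ is then \emph{the} solution of \eqref{fas>075}.

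Finally, to reach the single scale equation \eqref{fas200}, I would substitute \eqref{fas>211} into the macroscale equation \eqref{fas070} and integrate over $\mathcal{Q}$. The coefficients $q_{\text{eff}}$, $P_{\text{eff}}$, $z_{\text{eff}}$ arise precisely as in Theorem~\ref{t:b=2afact}. For the diffusion matrix, the first equality in \eqref{fas>210} is the definition, and the second (symmetric) form follows by testing \eqref{fas>213} with $\chi^i$ and integrating by parts over $\mathcal{Y}$, which gives $\int_\mathcal{Q}(b_2/a_2)B^{\ell k}\partial_k(\chi^j-y^j)\partial_\ell\chi^i\,\mathrm{d}y\,\mathrm{d}\tau=0$. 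The essential difference from the case $\beta=2\alpha$ is that, lacking the $\partial\chi^j/\partial\tau$ term, \emph{no antisymmetric contribution survives}, so $B_{\text{hom}}$ is symmetric and positive definite at once. The main point requiring care, and the place where the hypothesis $\beta>2\alpha$ enters through \eqref{fas>062}, is to confirm that the $\mathcal{M}_\mathcal{S}$-averages defining the cell problems are consistent with the $\tau$-integration carried out in passing from the two-scale to the single-scale form.
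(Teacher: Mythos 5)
Your proposal is correct and follows essentially the same route as the paper's proof: unique solvability of the now purely elliptic cell problems \eqref{fas>213} and \eqref{fas>215} via \cite[Chapter~1, Section~2.2]{lions}, direct verification that the factored ansatz \eqref{fas>211} satisfies \eqref{fas>075}, substitution into \eqref{fas070} to obtain \eqref{fas200}, and the second equality in \eqref{fas>210} obtained by testing \eqref{fas>213} with $\chi^i$, which here produces the vanishing identity $\int_{\mathcal{Q}}\frac{b_2}{a_2}B^{\ell k}\partial_k(\chi^j-y^j)\partial_\ell\chi^i\,\mathrm{d}y\,\mathrm{d}\tau=0$ in place of the antisymmetric $\partial\chi^j/\partial\tau$ contribution of the case $\beta=2\alpha$. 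Your correctly worked-out details (the decoupling of the ansatz into the $\partial v/\partial x_j$ and $v$ parts, and the immediate symmetry and positive definiteness of $B_{\textup{hom}}$, consistent with Corollary~\ref{c:b>2a2}) simply flesh out what the paper compresses into ``a standard computation.''
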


\par\noindent
\textit{Proof}
We first note that, by classical results
(see, i.e., \cite[Chapter~1, Section~2.2]{lions}),
equations \eqref{fas>213} and \eqref{fas>215}
admit a unique $\mathcal{Y}$--periodic solution with
null mean average.
Then, a standard computation shows that $v_1$ defined
in \eqref{fas>211} satisfies \eqref{fas>075}.
Finally, inserting \eqref{fas>211} into \eqref{fas070} and performing
some algebraic computations, we get equation \eqref{fas200}.
In particular, the second equality \eqref{fas>210} is obtained
as done for \eqref{fas210} in Theorem~\ref{t:b=2afact}, by using, now, \eqref{fas>213}.
\qed

\begin{corollary}
\label{c:b>2a2}
In the same hypotheses of Theorem~\ref{t:b>2aperv},
equation \eqref{fas200},
with the homogenized matrix $B_\textup{hom}$
given in \eqref{fas>210} and
complemented with the boundary and initial conditions
\eqref{fas080} and \eqref{fas090},
admits a unique solution.
Moreover,
the two--scale problem
\eqref{fas070}, \eqref{fas>075}, \eqref{fas080}, and \eqref{fas090}
admits a unique solution.
\end{corollary}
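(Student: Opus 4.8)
The plan is to follow the same two--step scheme employed in Corollary~\ref{c:b=2a2}: first establish uniqueness for the single--scale equation \eqref{fas200}, and then transfer it to the two--scale system. The decisive simplification in the present regime $\beta>2\alpha$ is that the homogenized matrix $B_\text{hom}$ in \eqref{fas>210} carries \emph{no} antisymmetric part. Indeed, since $\partial v_1/\partial\tau=0$ by \eqref{fas>062}, the cell equation \eqref{fas>213} contains no microscopic time derivative, and the second equality in \eqref{fas>210} exhibits $B_\text{hom}^{ij}$ directly as the symmetric quadratic form $\int_\mathcal{Q}(b_2/a_2)B^{\ell k}\partial_k(y^j-\chi^j)\partial_\ell(y^i-\chi^i)\,\mathrm{d}y\,\mathrm{d}\tau$. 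Symmetry is then immediate, while strict positive definiteness follows from the ellipticity \eqref{ppro00-000} of $B$ together with the positivity of $b_2/a_2$ granted by \eqref{ppro00-500}, invoking the standard non--degeneracy of the corrector fields $\nabla_y(y^j-\chi^j)=e_j-\nabla_y\chi^j$.

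First I would prove uniqueness for \eqref{fas200} complemented with \eqref{fas080} and \eqref{fas090}. Given two solutions $v$ and $\tilde v$, their difference $w=v-\tilde v$ solves the homogeneous equation with null initial and boundary data. Testing with $w$ and integrating over $\Omega$, the parabolic term produces $\frac{1}{2}\frac{\mathrm{d}}{\mathrm{d}t}\int_\Omega q_\text{eff}\,w^2\,\mathrm{d}x$ up to a lower--order contribution coming from $\partial_t q_\text{eff}$, while the diffusion term yields $\int_\Omega B_\text{hom}\nabla w\cdot\nabla w\,\mathrm{d}x\ge\gamma^{-1}\|\nabla w\|_2^2$ by the positive definiteness established above. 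Since here $B_\text{hom}$ is symmetric, this coercive bound is direct and does not even require the cancellation argument needed in the case $\beta=2\alpha$. The first--order contribution $\int_\Omega(P_\text{eff}\cdot\nabla w)\,w\,\mathrm{d}x$ and the zeroth--order contribution $\int_\Omega z_\text{eff}\,w^2\,\mathrm{d}x$ are then absorbed via Young's inequality, transferring part of the gradient into the coercive term and using the lower bound on $q_\text{eff}$ in \eqref{fas205}; a final application of Gronwall's lemma forces $w\equiv0$.

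The second step transfers this to the two--scale problem exactly as in Corollary~\ref{c:b=2a2}. Given two solutions $(v,v_1)$ and $(\tilde v,\tilde v_1)$ of \eqref{fas070}, \eqref{fas>075}, \eqref{fas080}, \eqref{fas090}, Corollary~\ref{c:b>2a1} together with Theorem~\ref{t:b>2afact} guarantee that $v_1$ and $\tilde v_1$ admit the factored representation \eqref{fas>211} driven by $v$ and $\tilde v$, respectively. Substituting these representations back into the macroscopic equation \eqref{fas070} shows that both $v$ and $\tilde v$ satisfy the single--scale equation \eqref{fas200}; by the uniqueness just proved we conclude $v=\tilde v$, and then $v_1=\tilde v_1$ follows from the uniqueness asserted in Corollary~\ref{c:b>2a1}.

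The computations are entirely routine, so no serious obstacle arises. The only point requiring a little care is verifying the strict positive definiteness of $B_\text{hom}$ from \eqref{fas>210}, which is the ingredient that renders the energy estimate coercive. I expect this case to be strictly easier than $\beta=2\alpha$, precisely because the absence of the microscopic time derivative in \eqref{fas>213} eliminates the antisymmetric contribution to the homogenized matrix.
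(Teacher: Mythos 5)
Your proposal is correct and follows essentially the same route as the paper: the paper's proof likewise observes that $B_\textup{hom}$ in \eqref{fas>210} is symmetric and positive definite (the ``standard calculations'' you spell out via the quadratic-form expression and the non-degeneracy of $\nabla_y(y^j-\chi^j)$), deduces uniqueness for \eqref{fas200} by standard energy estimates with Young's inequality and Gronwall's lemma, and then handles the two-scale problem exactly by the factorization-and-substitution argument of Corollary~\ref{c:b=2a2} that you reproduce. Your added remark that the case $\beta>2\alpha$ is simpler than $\beta=2\alpha$ because the absence of the $\partial_\tau$ term in \eqref{fas>213} removes the antisymmetric part of the homogenized matrix matches the paper's own observation.
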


\begin{proof}
First we note that
the matrix $B_\text{hom}$ in \eqref{fas>210}
is symmetric and, by standard
calculations, it is also positive definite.
Thus,
the uniqueness for equation \eqref{fas200},
complemented with \eqref{fas080} and \eqref{fas090},
as usual follows by standard energy estimates, Gronwall and
Young inequalities.

The second part of the corollary can be proven as we did
for Corollary~\ref{c:b=2a2}.
\end{proof}

\subsection{Slow oscillations}
\label{s:slow}
\par\noindent
In this section, we consider the remaining case $\beta<2\alpha$.

\begin{theorem}
\label{t:b<2aperv}
Let $\beta<2\alpha$.
Then, there exist
$v\in L^2(0,T;H^1_0(\Omega))$
and
$v_1\in L^2(\Omega_T\times\mathcal{S};\Hper(\mathcal{Y}))$,
with $\int_\mathcal{Y} v_1\,\emph{d}y=0$,
such that
\eqref{fas030}, \eqref{fas040} and \eqref{fas060} hold, as well as
\begin{equation}
  \label{fas<050bis}
  \mathcal{T}_\varepsilon(v_\varepsilon)
  \rightharpoonup v\,,
  \qquad\qquad
  \emph{ weakly in } L^2(\Omega_T\times \mathcal{Q}).
\end{equation}
Moreover,
the pair $(v,v_1)$ is a weak solution
of the two--scale problem \eqref{fas070}, \eqref{fas080}, \eqref{fas090}, complemented with the microscale equation
\begin{equation}
  \label{fas<075}
  \frac{1}{a_2}
  \emph{div}_y
  \Big[
  B
  \Big(
  b_2(\nabla v+\nabla_y v_1)
  +v\nabla b_2
  +\omega_{\alpha,1}v\nabla_y\Big(\frac{b}{b_1}\Big)
  \Big)
  \Big]
  =0\,,
  \qquad\qquad
  \emph{ in } \Omega_T\times\mathcal{Q}
  \,.
\end{equation}
\end{theorem}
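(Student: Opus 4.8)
The plan is to follow verbatim the three--step scheme used for Theorems~\ref{t:b=2aperv} and~\ref{t:b>2aperv} --- extract the limits from the a priori estimates, derive the macroscopic equation with a non--oscillating test function, and derive the microscale equation with an oscillating one --- the single genuinely new feature being that in the slow regime $\beta<2\alpha$ the microscopic time $\tau$ no longer enters the corrector equation. First I would establish compactness: the weak convergences \eqref{fas030} and \eqref{fas040} follow at once from the energy bound \eqref{fas100}, while the unfolded convergences \eqref{fas060} and \eqref{fas<050bis} follow from the gradient estimate \eqref{fas140}, the averaging identity \eqref{fold050}, and the slow--scaling counterparts of the two--scale compactness results of \cite{AAB2017} (the analogues of \cite[Proposition~2.12]{AAB2017} and of the theorem invoked for $\beta\ge2\alpha$), obtained by replacing $\tau$ with $\varepsilon^\beta$, $\varepsilon$ with $\varepsilon^\alpha$, and using $\beta<2\alpha$. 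The decisive difference with the case $\beta=2\alpha$, where \eqref{fas062} held, is that here $\varepsilon^\alpha\mathcal{T}_\varepsilon(\partial v_\varepsilon/\partial t)\to0$: indeed, by Lemma~\ref{t:vtl} one has $\int_{T_1}^T\int_\Omega(\partial v_\varepsilon/\partial t)^2\,\mathrm{d}x\,\mathrm{d}t\le\gamma\varepsilon^{-\beta}$, so that $\varepsilon^{2\alpha}\int_{T_1}^T\int_\Omega(\partial v_\varepsilon/\partial t)^2\,\mathrm{d}x\,\mathrm{d}t\le\gamma\varepsilon^{2\alpha-\beta}\to0$. This is precisely why the limit corrector carries a dependence on $\tau$ only as an $L^2$ parameter, with no evolution in $\tau$, i.e. $v_1\in L^2(\Omega_T\times\mathcal{S};\Hper(\mathcal{Y}))$.

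The macroscopic equation is then produced exactly as in Theorem~\ref{t:b=2aperv}. I would insert $\phi_\varepsilon(x,t)=\varphi(x,t)/a_2(x,t,t/\varepsilon^\beta)$, with $\varphi\in\mathcal{C}^\infty(\overline\Omega_T)$ satisfying $\varphi(\cdot,T)=0$ and $\varphi=0$ on $\partial\Omega\times[0,T]$, into the unfolded weak formulation \eqref{fas023}, split the gradient according to \eqref{pfas010}, invoke the strong convergence of the unfolded coefficients (Proposition~\ref{fold090}), and pass to the limit using \eqref{fas<050bis} and \eqref{fas060}. Since the time--microscale plays no role in this identity, the computation reproduces \eqref{pfas030} unchanged, yielding the weak forms of \eqref{fas070} and of the initial condition \eqref{fas090}.

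For the microscale equation I would test the non--time--integrated form of \eqref{fas000} against $\phi_\varepsilon(x,t)=\varepsilon^\alpha\big(\varphi(x,t)/a_2(x,t,t/\varepsilon^\beta)\big)\,\psi(x/\varepsilon^\alpha,t/\varepsilon^\beta)$, with $\varphi$ as above and $\psi\in\Hper(\mathcal{Q})$, and unfold to obtain the analogue of \eqref{pfas050}. The whole capacity block then carries the prefactor $\varepsilon^\alpha$: its $\partial v_\varepsilon/\partial t$ part vanishes by the estimate of the previous paragraph, while its $v_\varepsilon\,\partial_t(1/b_1^\varepsilon)$ part vanishes because $b_1$ is independent of $\tau$, so $\partial_t b_1^\varepsilon=O(1)$ and the surviving factor $\varepsilon^\alpha\to0$. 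In the diffusion block, $\varepsilon^\alpha$ exactly cancels the $\varepsilon^{-\alpha}\nabla_y\psi$ generated by the microscopic gradient, whereas the $\nabla_x\psi$ and $\nabla(\varphi/a_2)$ contributions retain a leftover $\varepsilon^\alpha\to0$. Passing to the limit with \eqref{fas060} and \eqref{fas<050bis} leaves
\[
\int_0^T\!\!\int_\Omega\!\int_\mathcal{Q}\frac{1}{a_2}B\Big(v\big(\nabla b_2+\omega_{\alpha,1}\nabla_y(b/b_1)\big)+b_2(\nabla v+\nabla_y v_1)\Big)\cdot\varphi\,\nabla_y\psi\,\mathrm{d}y\,\mathrm{d}\tau\,\mathrm{d}x\,\mathrm{d}t=0
\]
for every admissible $\varphi$ and every $\psi\in\Hper(\mathcal{Q})$, which is the weak formulation of \eqref{fas<075}; it holds pointwise in $\tau\in\mathcal{S}$ because $\psi$ is free to depend on $\tau$.

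The main obstacle, and the only place where the slow regime truly differs from the fast one, is justifying the vanishing of the time--microscale contribution in the last step. It rests on the sharp bound $\int_{T_1}^T\int_\Omega(\partial v_\varepsilon/\partial t)^2\le\gamma\varepsilon^{-\beta}$ of Lemma~\ref{t:vtl}, valid \emph{only} away from the initial layer, combined with the strict inequality $\beta<2\alpha$; one must therefore localise away from $t=0$, since the cruder global estimate of Lemma~\ref{t:vt} is polluted by the $O(\varepsilon^{-2\alpha})$ growth of $\|\nabla\bar v_\varepsilon\|_2$ and would not let the term disappear. Everything else is a routine transcription of the $\beta=2\alpha$ computations.
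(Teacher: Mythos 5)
Your proposal is correct in outline, and for the decisive step it takes a genuinely different route from the paper. The paper never moves the time derivative onto $v_\varepsilon$: it stays inside the time--integrated weak formulation \eqref{fas023}, so the $\tau$--oscillation of the test function \emph{does} get differentiated, producing the dangerous term of order $\varepsilon^{\alpha-\beta}$ involving $\mathcal{T}_\varepsilon(\partial\psi/\partial\tau)$ (the second term in \eqref{pfas<050}). Killing that term is the heart of the paper's proof: it is split as $J_1^\varepsilon+J_2^\varepsilon$ via $\mathcal{T}_\varepsilon(v_\varepsilon)=\mathcal{Z}_\varepsilon(v_\varepsilon)+\mathcal{M}_\varepsilon(v_\varepsilon)$, the oscillation pieces are handled by \cite[Proposition~2.22]{AAB2017} and \cite[Remark~2.23]{AAB2017} with the prefactor $\varepsilon^{2\alpha-\beta}\to0$, and the average piece is removed by an integration by parts in $\tau$ using $\varepsilon^{-\beta}\partial_\tau\mathcal{T}_\varepsilon(a_1^\varepsilon/b_1^\varepsilon)=\mathcal{T}_\varepsilon\big(\partial_t(a_1^\varepsilon/b_1^\varepsilon)\big)$ --- which is precisely why the paper imposes the extra condition $\psi(y,0)=\psi(y,1)=0$ on the oscillating test function. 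You instead test the non--time--integrated form, so $\partial_\tau\psi$ never appears at all, and the entire capacity block dies by the crude bound $\varepsilon^\alpha\Vert\partial_t v_\varepsilon\Vert_{L^2(\Omega\times(T_1,T))}\le\gamma\,\varepsilon^{\alpha-\beta/2}\to0$ from Lemma~\ref{t:vtl}, together with $\partial_t b_1^\varepsilon=O(1)$. This is legitimate: $\partial_t v_\varepsilon\in L^2(\Omega_T)$ for each fixed $\varepsilon$ by the regularity granted via \cite{LSU}, so the two formulations are equivalent once $\varphi$ vanishes near $t=0$ and $t=T$, and localising away from $t=0$ is harmless because \eqref{fas<075} carries no initial condition. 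Your argument is arguably more elementary (it bypasses Propositions~2.22/2.23 of \cite{AAB2017} and the constraint on $\psi$), at the price of leaning on the solution's time regularity instead of staying in the unfolded distributional framework; both approaches use $\beta<2\alpha$ in the same essential way, yours through $\varepsilon^{\alpha-\beta/2}$, the paper's through $\varepsilon^{2\alpha-\beta}$.

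One imprecision in your compactness step should be fixed. You attribute \eqref{fas<050bis} to the global estimate \eqref{fas140}; but for $\beta<2\alpha$ that bound is dominated by $\Vert\nabla\bar v_\varepsilon\Vert_2^2=O(\varepsilon^{-2\alpha})$, which is worse than $\varepsilon^{-\beta}$ and too weak for the two--scale compactness result. The paper proves \eqref{fas<050bis} in two steps: a global weak limit $\widetilde v$ in $L^2(\Omega_T\times\mathcal{Q})$ via \cite[Proposition~2.8]{AAB2017}, then identification $\widetilde v=v$ by obtaining, from the \emph{localised} estimate \eqref{fas340} and \cite[Proposition~2.12]{AAB2017}, weak convergence in $L^2(\Omega\times(T_1,T);H^1(\mathcal{Q}))$ and testing with compactly supported functions. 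Your closing paragraph shows you understand that Lemma~\ref{t:vtl}, not Lemma~\ref{t:vt}, is the operative estimate, but that observation must be wired into the derivation of \eqref{fas<050bis} itself, not only into the vanishing of the time--derivative term. Note also that \eqref{fas060} needs no time--derivative bound at all: it follows directly from \eqref{fas040} and \cite[Theorem~2.11]{AAB2017}.
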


\par\noindent
\textit{Proof}
The convergence results in \eqref{fas030} and \eqref{fas040}
follow from \eqref{fas100};
\eqref{fas060}
follows from \eqref{fas040} and \cite[Theorem~2.11]{AAB2017}.
In order to prove \eqref{fas<050bis}, we proceed as follows.
As, for instance, in \cite[Proposition~2.8]{AAB2017}, for a suitable $\widetilde v(x,t,y,\tau)$, we have
\begin{equation*}
  \mathcal{T}_{\varepsilon}(v_{\varepsilon})
  \rightharpoonup \widetilde v\,,
\qquad  \qquad
  \text{ weakly in } L^2(\Omega_T\times \mathcal{Q}),
\end{equation*}
as a consequence of \eqref{fas040}. On the other hand,
by \eqref{fas340} and \cite[Proposition~2.12]{AAB2017}, with $m=1/2$ and $\tau=\varepsilon^{\beta}$, we get that
\begin{equation*}
  \mathcal{T}_{\varepsilon}(v_{\varepsilon})
  \rightharpoonup  v\,,
  \qquad\qquad
  \text{ weakly in } L^2(\Omega\times (T_{1},T); H^{1}(\mathcal{Q})).
\end{equation*}
By testing with compactly supported functions in $\Omega_{T}\times\mathcal{Q}$ we conclude that $\widetilde v=v$.

The proof of \eqref{fas070} and \eqref{fas090}
is the same as in the case $\beta=2\alpha$.
In order to prove \eqref{fas<075}, we choose
$\phi_\varepsilon(x,t)=
  \varepsilon^\alpha(\varphi(x,t)/a_2(x,t,t/\varepsilon^{\beta}))
                     \psi(x/\varepsilon^\alpha,t/\varepsilon^{\beta})$,
where $\varphi\in \mathcal{C}^\infty(\overline\Omega_T)$ with $\varphi(x,T)=0$ in $\overline\Omega$ and $\varphi=0$ on
$\partial\Omega\times[0,T]$, and $\psi\in \Hper(\mathcal{Q})$ with $\psi(y,0)=\psi(y,1)=0$ in $\mathcal{Y}$,
as test function in \eqref{fas023}. We unfold the resulting
equation and obtain
\begin{multline}
\label{pfas<050}
-
 \varepsilon^\alpha
 \int_0^T\int_\Omega
 \int_\mathcal{Q}
    \mathcal{T}_\varepsilon(v_\varepsilon)
    \mathcal{T}_\varepsilon\Big(\frac{1}{b^\varepsilon_1}\Big)
\Big(
     \mathcal{T}_\varepsilon\Big(\frac{\partial a^\varepsilon_1}
                                      {\partial t}\Big)
     \mathcal{T}_\varepsilon(\varphi\psi)
     +
     \mathcal{T}_\varepsilon(a^\varepsilon_1)
     \mathcal{T}_\varepsilon(\psi)
     \mathcal{T}_\varepsilon\Big(\frac{\partial \varphi}
                                      {\partial t}\Big)
\Big)
\,\textrm{d}y\,\textrm{d}\tau
\,\textrm{d}x\,\textrm{d}t
\\
-
 \varepsilon^{\alpha-\beta}
 \int_0^T\int_\Omega
 \int_\mathcal{Q}
    \mathcal{T}_\varepsilon(v_\varepsilon)
    \mathcal{T}_\varepsilon\Big(\frac{1}{b^\varepsilon_1}\Big)
     \mathcal{T}_\varepsilon(a^\varepsilon_1)
     \mathcal{T}_\varepsilon(\varphi)
     \mathcal{T}_\varepsilon\Big(\frac{\partial\psi}{\partial\tau}\Big)
\,\textrm{d}y\,\textrm{d}\tau
\,\textrm{d}x\,\textrm{d}t
\\
+
\varepsilon^\alpha
\int_0^T\int_\Omega
\int_\mathcal{Q}
\mathcal{T}_\varepsilon(B^\varepsilon)
\Big(
     \mathcal{T}_\varepsilon(v_\varepsilon)
     \mathcal{T}_\varepsilon\Big(
           \nabla_xb^\varepsilon_2
           +\varepsilon\nabla_x\Big(\frac{b^\varepsilon}
                                         {b^\varepsilon_1}\Big)
           +\varepsilon^{1-\alpha}
                      \nabla_y\Big(\frac{b^\varepsilon}
                                        {b^\varepsilon_1}\Big)
                            \Big)
     +
     \mathcal{T}_\varepsilon\Big(b^\varepsilon_2
                                 +\varepsilon\frac{b^\varepsilon}
                                                  {b^\varepsilon_1}\Big)
     \mathcal{T}_\varepsilon(\nabla v_\varepsilon)
\Big)
\\
\cdot
\Big(
     \mathcal{T}_\varepsilon\Big(\nabla\Big(\frac{\varphi}{a_2}\Big)\Big)
     \mathcal{T}_\varepsilon(\psi)
     +
     \mathcal{T}_\varepsilon\Big(\frac{\varphi}{a_2}\Big)
     \mathcal{T}_\varepsilon(\nabla_x\psi)
     +
     \frac{1}{\varepsilon^\alpha}
     \mathcal{T}_\varepsilon\Big(\frac{\varphi}{a_2}\Big)
     \mathcal{T}_\varepsilon(\nabla_y\psi)
\Big)
\,\textrm{d}y\,\textrm{d}\tau
\,\textrm{d}x\,\textrm{d}t
\\
=
\varepsilon^\alpha
\int_0^T\int_\Omega
f \frac{\varphi}{a_2} \psi
\,\textrm{d}x\,\textrm{d}t
+R_\varepsilon
\;,
\end{multline}
where $R_\varepsilon\to0$ for $\varepsilon\to0$.

Using that, as we show below,
the second term in \eqref{pfas<050}
tends to zero in the limit $\varepsilon\to0$,
we get the weak formulation of \eqref{fas<075},
similarly as we did for \eqref{pfas>060} in the proof of Theorem~\ref{t:b>2aperv}.

Indeed,
the second term in \eqref{pfas<050}
can be written as
\begin{multline}
\label{pfas<120}
 \varepsilon^{2\alpha-\beta}
 \int_0^T\int_\Omega
 \int_\mathcal{Q}
\frac{1}{\varepsilon^\alpha}
    \mathcal{Z}_\varepsilon(v_\varepsilon)
    \mathcal{T}_\varepsilon\Big(\frac{1}{b^\varepsilon_1}\Big)
     \mathcal{T}_\varepsilon(a^\varepsilon_1)
     \mathcal{T}_\varepsilon(\varphi)
     \mathcal{T}_\varepsilon\Big(\frac{\partial\psi}{\partial\tau}\Big)
\,\textrm{d}y\,\textrm{d}\tau
\,\textrm{d}x\,\textrm{d}t
\\
+
 \varepsilon^{\alpha-\beta}
 \int_0^T\int_\Omega
 \int_\mathcal{Q}
    \mathcal{M}_\varepsilon(v_\varepsilon)
    \mathcal{T}_\varepsilon\Big(\frac{1}{b^\varepsilon_1}\Big)
     \mathcal{T}_\varepsilon(a^\varepsilon_1)
     \mathcal{T}_\varepsilon(\varphi)
     \mathcal{T}_\varepsilon\Big(\frac{\partial\psi}{\partial\tau}\Big)
\,\textrm{d}y\,\textrm{d}\tau
\,\textrm{d}x\,\textrm{d}t
=
J_1^\varepsilon
+
J_2^\varepsilon
\;.
\end{multline}

Recalling
\cite[Proposition~2.22]{AAB2017}
(with, in the notation there,
$m=r=1/2$,
$\alpha=1$,
$\varepsilon$ replaced by $\varepsilon^\alpha$,
and
$\tau$ replaced by $\varepsilon^\beta$),
it follows that $J_1^\varepsilon\to0$, for $\varepsilon\to0$.
Moreover,
we write
\begin{multline}
\label{pfas<110}
J_2^\varepsilon
=
 \varepsilon^{2\alpha-\beta}
 \int_0^T\int_\Omega
 \int_\mathcal{Q}
    \mathcal{M}_\varepsilon(v_\varepsilon)
    \mathcal{T}_\varepsilon\Big(\frac{1}{b^\varepsilon_1}\Big)
     \mathcal{T}_\varepsilon(a^\varepsilon_1)
     \frac{1}{\varepsilon^\alpha}
     \mathcal{Z}_\varepsilon(\varphi)
     \mathcal{T}_\varepsilon\Big(\frac{\partial\psi}{\partial\tau}\Big)
\,\textrm{d}y\,\textrm{d}\tau
\,\textrm{d}x\,\textrm{d}t
\\
+
 \varepsilon^{\alpha-\beta}
 \int_0^T\int_\Omega
 \int_\mathcal{Q}
    \mathcal{M}_\varepsilon(v_\varepsilon)
    \mathcal{T}_\varepsilon\Big(\frac{1}{b^\varepsilon_1}\Big)
     \mathcal{T}_\varepsilon(a^\varepsilon_1)
     \mathcal{M}_\varepsilon(\varphi)
     \mathcal{T}_\varepsilon\Big(\frac{\partial\psi}{\partial\tau}\Big)
\,\textrm{d}y\,\textrm{d}\tau
\,\textrm{d}x\,\textrm{d}t
\end{multline}
and note that,
using
\cite[Remark~2.23]{AAB2017}
(with, in the notation there,
$m=r=1/2$,
$\alpha=1$,
$\varepsilon$ replaced by $\varepsilon^\alpha$,
and
$\tau$ replaced by $\varepsilon^\beta$),
the first term tends to zero, in the limit
$\varepsilon\to0$.

Now, we integrate the second term in \eqref{pfas<110} with respect
to $\tau$ taking into account that
\begin{displaymath}
     \mathcal{T}_\varepsilon\Big(\frac{\partial\psi}{\partial\tau}\Big)
=
     \frac{\partial}{\partial\tau} \mathcal{T}_\varepsilon(\psi)
\end{displaymath}
and we get that it is equal to
\begin{multline}
\label{pfas<130}
-
 \varepsilon^{\alpha}
 \int_0^T\int_\Omega
 \int_\mathcal{Q}
    \mathcal{M}_\varepsilon(v_\varepsilon)
    \frac{1}{\varepsilon^\beta}
    \frac{\partial}{\partial\tau}\Big(
    \mathcal{T}_\varepsilon\Big(\frac{a^\varepsilon_1}{b^\varepsilon_1}\Big)
                                 \Big)
     \mathcal{M}_\varepsilon(\varphi)
     \mathcal{T}_\varepsilon(\psi)
\,\textrm{d}y\,\textrm{d}\tau
\,\textrm{d}x\,\textrm{d}t
\\
=
-
 \varepsilon^{\alpha}
 \int_0^T\int_\Omega
 \int_\mathcal{Q}
    \mathcal{M}_\varepsilon(v_\varepsilon)
    \mathcal{T}_\varepsilon\Big(
    \frac{\partial}{\partial t}
    \Big(\frac{a^\varepsilon_1}{b^\varepsilon_1}\Big)\Big)
     \mathcal{M}_\varepsilon(\varphi)
     \mathcal{T}_\varepsilon(\psi)
\,\textrm{d}y\,\textrm{d}\tau
\,\textrm{d}x\,\textrm{d}t
\to0
\;\;,
\end{multline}
where we
used that
\begin{displaymath}
    \frac{1}{\varepsilon^\beta}
    \frac{\partial}{\partial\tau}\Big(
    \mathcal{T}_\varepsilon\Big(\frac{a^\varepsilon_1}{b^\varepsilon_1}\Big)
                                 \Big)
=
    \mathcal{T}_\varepsilon\Big(
    \frac{\partial}{\partial t}
    \Big(\frac{a^\varepsilon_1}{b^\varepsilon_1}\Big)\Big)
\;.
\end{displaymath}
\qed

Similarly to the cases $\beta\ge2\alpha$ discussed above, we have the
following corollary.

\begin{corollary}
\label{c:b<2a1}
Given $v\in L^2(0,T;H^1_0(\Omega))$,
equation \eqref{fas<075} admits a unique solution
$v_1\in L^2(\Omega_T\times\mathcal{S};\Hper(\mathcal{Y}))$
with
$\int_\mathcal{Y}v_1\,\textup{d}y=0$.
\end{corollary}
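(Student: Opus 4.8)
The plan is to read \eqref{fas<075} as a family of linear elliptic cell problems in the fast space variable $y$, in which $(x,t,\tau)\in\Omega_T\times\mathcal{S}$ enter only as parameters; note that, in contrast with the case $\beta=2\alpha$, no derivative in $\tau$ appears here, so the microscale problem is genuinely elliptic and $\tau$ plays the same passive role as $(x,t)$. Since $a_2=a_2(x,t,\tau)$ is independent of $y$, I would first cancel the factor $1/a_2$ and rewrite \eqref{fas<075}, for a.e.\ fixed $(x,t,\tau)$, as $\Div_y\big[b_2 B\,\nabla_y v_1\big]=-\Div_y\big[B\big(b_2\nabla v+v\nabla b_2+\omega_{\alpha,1}v\,\nabla_y(b/b_1)\big)\big]$ in $\mathcal{Y}$, to be solved among $\mathcal{Y}$-periodic functions with zero mean.

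First I would pose the weak formulation on the Hilbert space $W=\{w\in\Hper(\mathcal{Y}):\int_\mathcal{Y}w\,\text{d}y=0\}$, with the bilinear form $A(v_1,w)=\int_\mathcal{Y}b_2\,B\,\nabla_y v_1\cdot\nabla_y w\,\text{d}y$. The uniform ellipticity \eqref{ppro00-000} of $B$ together with the lower bound on $b_2$ in \eqref{ppro00-500} gives the coercivity estimate $A(w,w)\ge C^{-2}\|\nabla_y w\|_{L^2(\mathcal{Y})}^2$, which controls the full $H^1(\mathcal{Y})$-norm on $W$ through the Poincar\'e--Wirtinger inequality; boundedness of $A$ is immediate from the $L^\infty$ bounds. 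The datum $B\big(b_2\nabla v+v\nabla b_2+\omega_{\alpha,1}v\,\nabla_y(b/b_1)\big)$ defines a bounded linear functional on $W$, since $B,b_2,\nabla b_2$ and $\nabla_y(b/b_1)$ are all essentially bounded (the last two by the Lipschitz assumptions and the lower bound on $b_1$) and $v(x,t),\nabla v(x,t)$ are frozen. Lax--Milgram then yields, for a.e.\ $(x,t,\tau)$, a unique $v_1(x,t,\tau,\cdot)\in W$.

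To upgrade this fiberwise solvability to membership in $L^2(\Omega_T\times\mathcal{S};\Hper(\mathcal{Y}))$, I would test the weak equation against $v_1$ itself and use coercivity to obtain $\|\nabla_y v_1(x,t,\tau,\cdot)\|_{L^2(\mathcal{Y})}\le\const\big(|\nabla v(x,t)|+|v(x,t)|\big)$ for a.e.\ $(x,t,\tau)$. Integrating this bound over $\Omega_T\times\mathcal{S}$ (recall $|\mathcal{S}|=1$) and invoking $v\in L^2(0,T;H^1_0(\Omega))$ together with Poincar\'e--Wirtinger gives the required global $L^2$ bound, while the constraint $\int_\mathcal{Y}v_1\,\text{d}y=0$ is built into $W$. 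Uniqueness is immediate: the difference of two solutions solves the homogeneous cell problem, hence has vanishing $y$-gradient and, being zero-mean, vanishes identically for a.e.\ $(x,t,\tau)$.

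As in Corollaries~\ref{c:b=2a1} and~\ref{c:b>2a1}, the elliptic theory itself is routine; the only point requiring care is the measurability of $(x,t,\tau)\mapsto v_1(x,t,\tau,\cdot)$ and the resulting global integrability. This is handled by the linearity and continuity of the Lax--Milgram solution operator in the data $(v,\nabla v)$, which depend measurably on the parameters, together with the fact that the coercivity constant $C^{-2}$ from \eqref{ppro00-000}--\eqref{ppro00-500} is uniform in $(x,t,\tau)$.
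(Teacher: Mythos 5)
Your proof is correct, and on the uniqueness half it coincides with the paper's: the proof the authors give for Corollary~\ref{c:b=2a1} (to which the present corollary refers via ``similarly'') consists precisely of the standard energy estimate with Young's inequality, using linearity and periodicity, which is your coercivity argument applied to the difference of two solutions. Where you genuinely diverge is existence. The paper never runs Lax--Milgram on \eqref{fas<075} directly: existence of $v_1$ for the particular limit $v$ is a by-product of the two-scale convergence in Theorem~\ref{t:b<2aperv}, while for an arbitrary $v\in L^2(0,T;H^1_0(\Omega))$ it follows from the factorization \eqref{fas211} in Theorem~\ref{t:b<2afact}, because the cell problems \eqref{fas<213}--\eqref{fas<215} do not involve $v$ and are solvable by the classical results cited from \cite[Chapter~1, Section~2.2]{lions}. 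Your direct, parameterized Lax--Milgram argument on the zero-mean space $W\subset\Hper(\mathcal{Y})$ buys independence from both the homogenization limit and the corrector factorization, and your fiberwise bound $\|\nabla_y v_1(x,t,\tau,\cdot)\|_{L^2(\mathcal{Y})}\le\const\big(|v(x,t)|+|\nabla v(x,t)|\big)$, with the coercivity constant uniform in $(x,t,\tau)$ by \eqref{ppro00-000} and \eqref{ppro00-500}, gives the global membership $v_1\in L^2(\Omega_T\times\mathcal{S};\Hper(\mathcal{Y}))$ explicitly; your care with measurability in the parameters is a point the paper leaves implicit. Conversely, the paper's route buys the explicit structure $v_1=-\chi^j\,\partial v/\partial x_j-\zeta v$, which it needs anyway to derive the single-scale equation \eqref{fas200}. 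Your preliminary observation that the absence of the $\partial_\tau$ term makes the microscale problem genuinely elliptic, with $\tau$ purely parametric, matches the paper's closing remark in Section~\ref{s:slow}.
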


\begin{theorem}
\label{t:b<2afact}
In the same hypotheses of Theorem~\ref{t:b<2aperv},
the corrector $v_1$ can be written in the factored form
\eqref{fas211},
where the cell functions $\chi^j$, $j=1,\dots,n$ and $\zeta$
are $\mathcal{Y}$--periodic, with null mean average over $\mathcal{Y}$,
and
are the unique solutions of
\begin{equation}
\label{fas<213}
\frac{1}{a_2}
\emph{div}_y
\big(
     b_2 B \nabla_y(\chi^j-y_j)
\big)
=0
\end{equation}
and
\begin{equation}
\label{fas<215}
\frac{1}{a_2}
\emph{div}_y
\big(
     b_2 B \nabla_y\zeta
\big)
+
\frac{1}{a_2}
\emph{div}_y
\Big(
     B\Big(
           \nabla b_2 +\omega_{\alpha,1} \nabla_y\frac{b}{b_1}
      \Big)
\Big)
=0
\,.
\end{equation}
Moreover,
the system \eqref{fas070} and \eqref{fas<075} can be written as the
single scale equation
\eqref{fas200},
where
$q_{\emph{eff}}$,
$P_{\emph{eff}}$,
and
$z_{\emph{eff}}$
are formally defined as in Theorem~\ref{t:b=2afact},
and $B_{\emph{hom}}$
is defined as in \eqref{fas>210},
with $\chi$ and $\zeta$ being the solutions of \eqref{fas<213}
and \eqref{fas<215}.
\end{theorem}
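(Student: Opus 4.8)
The plan is to mirror verbatim the template already used for Theorems~\ref{t:b=2afact} and \ref{t:b>2afact}, splitting the argument into three independent blocks: solvability of the cell problems, verification of the factored ansatz, and reduction of the two--scale system to the single--scale equation~\eqref{fas200}.

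First I would establish the solvability of \eqref{fas<213} and \eqref{fas<215}. Since $\beta<2\alpha$, the corrector $v_1$ lies in $L^2(\Omega_T\times\mathcal{S};\Hper(\mathcal{Y}))$, so for almost every fixed $(x,t,\tau)$ these are \emph{purely elliptic} cell problems on $\mathcal{Y}$ with coefficient matrix $b_2 B/a_2$. By the bounds \eqref{ppro00-000} and \eqref{ppro00-500} this matrix is bounded and uniformly elliptic, hence the associated bilinear form is coercive on $\Hper(\mathcal{Y})/\mathbb{R}$, and by Lax--Milgram (the classical argument in \cite[Chapter~1, Section~2.2]{lions}) each problem admits a unique $\mathcal{Y}$--periodic solution with null mean. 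The solvability conditions are automatic, since the right--hand sides $\emph{div}_y(b_2 B\,e_j)$ and $\emph{div}_y(B(\nabla b_2+\omega_{\alpha,1}\nabla_y(b/b_1)))$ are $y$--divergences of periodic fields and hence average to zero over $\mathcal{Y}$; measurability of $\chi^j$ and $\zeta$ in $(x,t,\tau)$ follows from the linear dependence on the data.

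Next I would insert the factored form \eqref{fas211} into the microscale equation \eqref{fas<075}. Using $\nabla_y y_j=e_j$ one gets $\nabla v+\nabla_y v_1=\nabla_y(y_j-\chi^j)\,\partial_{x_j}v-\nabla_y\zeta\,v$; collecting the terms that multiply $\partial_{x_j}v$ and those that multiply $v$, the former are governed by $\emph{div}_y(b_2 B\nabla_y(\chi^j-y_j))$ and vanish by \eqref{fas<213}, while the latter combine $\emph{div}_y(b_2 B\nabla_y\zeta)$ with $\emph{div}_y(B(\nabla b_2+\omega_{\alpha,1}\nabla_y(b/b_1)))$ and vanish on account of \eqref{fas<215}. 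Hence the factored $v_1$ solves \eqref{fas<075}, and by the uniqueness granted in Corollary~\ref{c:b<2a1} it is the corrector, which justifies \eqref{fas211}.

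Finally I would substitute \eqref{fas211} into the macroscopic equation \eqref{fas070}, integrate over $\mathcal{Q}$, and sort the result according to the factors $\partial_t v$, $\nabla v$ and $v$; this reproduces \eqref{fas200} with $q_{\emph{eff}}$, $P_{\emph{eff}}$ and $z_{\emph{eff}}$ formally identical to those of Theorem~\ref{t:b=2afact}. The one computation demanding care is the symmetric representation of $B_{\emph{hom}}$ in \eqref{fas>210}: after rewriting $B^{ik}=B^{\ell k}\partial_\ell y^i$, multiplying \eqref{fas<213} by $\chi^i$ and integrating by parts in $y$ (the factor $1/a_2$ being $y$--independent) yields $\int_\mathcal{Q}\frac{b_2}{a_2}B^{\ell k}\partial_k(y^j-\chi^j)\partial_\ell\chi^i\,\emph{d}y\,\emph{d}\tau=0$, and subtracting this vanishing term from the first form of \eqref{fas>210} produces the symmetric expression. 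The decisive structural feature of the slow regime is that the cell equation \eqref{fas<213} carries \emph{no} microscopic time derivative, so the antisymmetric contribution $\int_\mathcal{Q}\frac{a_1}{b_1}\frac{\partial\chi^j}{\partial\tau}\chi^i\,\emph{d}y\,\emph{d}\tau$ present in \eqref{fas210} is simply absent and $B_{\emph{hom}}$ is genuinely symmetric, exactly as in Theorem~\ref{t:b>2afact}. The main difficulty, such as it is, is purely bookkeeping: tracking which gradient ($\nabla_x$ or $\nabla_y$) acts on each coefficient and checking that every term not absorbed by a cell equation integrates to zero over $\mathcal{Q}$ by $\mathcal{Q}$--periodicity.
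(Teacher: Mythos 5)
Your proposal is correct and follows essentially the same route as the paper's proof, which likewise consists of the classical elliptic solvability of the cell problems via \cite[Chapter~1, Section~2.2]{lions} (with $(x,t,\tau)$ entering only as parameters, since for $\beta<2\alpha$ no microscopic time derivative appears), a direct verification that the factored ansatz \eqref{fas211} satisfies the microscale equation, and the algebraic insertion into \eqref{fas070} yielding \eqref{fas200}, with the symmetric second form of $B_{\textup{hom}}$ in \eqref{fas>210} obtained from \eqref{fas<213} exactly as you describe, the antisymmetric $\int_{\mathcal{Q}}\frac{a_1}{b_1}\frac{\partial\chi^j}{\partial\tau}\chi^i\,\textup{d}y\,\textup{d}\tau$ contribution of \eqref{fas210} being indeed absent. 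One pedantic remark: carrying out your substitution explicitly, the coefficient of $v$ is $-\frac{1}{a_2}\textup{div}_y\big(b_2B\nabla_y\zeta\big)+\frac{1}{a_2}\textup{div}_y\big(B\big(\nabla b_2+\omega_{\alpha,1}\nabla_y\frac{b}{b_1}\big)\big)$, so the relative sign between the two terms in the printed \eqref{fas<215} does not match the pattern of \eqref{fas215} and \eqref{fas>215} (apparently a typographical slip in the statement, obtained by formally dropping the $\tau$--derivative from \eqref{fas215}), and your argument tacitly uses the sign--corrected equation, which is the one the cell function $\zeta$ must actually solve.
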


\par\noindent
\textit{Proof}
As above, by classical results
\cite[Chapter~1, Section~2.2]{lions},
equations \eqref{fas<213} and \eqref{fas<215}
admit a unique $\mathcal{Y}$--periodic solution, with
null mean average.
Then, a standard computation shows that $v_1$ defined
in \eqref{fas211} satisfies \eqref{fas>075}.
Finally, inserting \eqref{fas211} into \eqref{fas070} and performing
some algebraic computations, we get equation \eqref{fas200}.
\qed
\medskip

Similarly to the case $\beta>2\alpha$ discussed above, we have the
following corollary.

\begin{corollary}
\label{c:b<2a2}
In the same hypotheses of Theorem~\ref{t:b<2aperv},
equation \eqref{fas200},
with the homogenized matrix $B_\textup{hom}$
given in \eqref{fas>210}, where
$\chi$ and $\zeta$ are the solutions of \eqref{fas<213}
and \eqref{fas<215},
and
complemented with the boundary and initial conditions
\eqref{fas080} and \eqref{fas090},
admits a unique solution.
Moreover,
the two--scale problem
\eqref{fas070}, \eqref{fas<075}, \eqref{fas080}, and \eqref{fas090}
admits a unique solution.
\end{corollary}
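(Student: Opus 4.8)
The plan is to mirror the two-step argument used for Corollary~\ref{c:b>2a2}, taking advantage of the fact that in the slow regime $\beta<2\alpha$ the cell problems \eqref{fas<213} and \eqref{fas<215} are \emph{stationary} in the microscopic time $\tau$. First I would establish that the homogenized matrix $B_{\textup{hom}}$ given by \eqref{fas>210} is symmetric and positive definite. The decisive structural observation is that, unlike in the case $\beta=2\alpha$, the cell functions $\chi^j$ here solve a purely elliptic problem with no $\partial_\tau$ term; consequently the contribution $\int_{\mathcal{Q}}(a_1/b_1)(\partial\chi^j/\partial\tau)\chi^i\,\mathrm{d}y\,\mathrm{d}\tau$ that produced the antisymmetric part in \eqref{fas210} is simply absent, and the second representation in \eqref{fas>210} exhibits $B_{\textup{hom}}$ directly in the manifestly symmetric quadratic form $\int_{\mathcal{Q}}(b_2/a_2)B^{\ell k}\partial_k(y^j-\chi^j)\partial_\ell(y^i-\chi^i)\,\mathrm{d}y\,\mathrm{d}\tau$. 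Positive definiteness then follows by testing this form on a fixed vector $\xi\in\mathbb{R}^n$, using the uniform ellipticity \eqref{ppro00-000} of $B$ together with the lower bounds \eqref{ppro00-500} on $a_2,b_2$, exactly as in the classical Fick case of \cite{AAB2017}.

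Next I would deduce uniqueness for the single-scale problem \eqref{fas200}, \eqref{fas080}, \eqref{fas090}. Given two solutions $v,\tilde v$, their difference $w=v-\tilde v$ solves \eqref{fas200} with vanishing right-hand side and zero boundary and initial data. Multiplying by $w$ and integrating over $\Omega$ produces, after using that $q_{\textup{eff}}$ in \eqref{fas205} is bounded below by a strictly positive constant (again via \eqref{ppro00-500}) and that $B_{\textup{hom}}$ is positive definite, a dissipative term controlling $\int_\Omega|\nabla w|^2\,\mathrm{d}x$; the time derivative of $q_{\textup{eff}}$ and the lower-order terms carrying the bounded coefficients $P_{\textup{eff}}$ and $z_{\textup{eff}}$ are absorbed by Young's inequality. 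Gronwall's lemma then forces $w\equiv0$, so $v=\tilde v$.

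Finally, uniqueness for the two-scale problem \eqref{fas070}, \eqref{fas<075}, \eqref{fas080}, \eqref{fas090} follows precisely as in Corollary~\ref{c:b=2a2}. Assuming two solutions $(v,v_1)$ and $(\tilde v,\tilde v_1)$, Corollary~\ref{c:b<2a1} and Theorem~\ref{t:b<2afact} guarantee that the correctors $v_1,\tilde v_1$ are uniquely determined by $v,\tilde v$ through the factored representation \eqref{fas211}. Substituting these expressions into \eqref{fas070} shows that both $v$ and $\tilde v$ satisfy the single-scale equation \eqref{fas200}; the uniqueness just proved then gives $v=\tilde v$, whence $v_1=\tilde v_1$ as well. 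I expect the only genuinely substantive step to be the verification of the symmetry and positive definiteness of $B_{\textup{hom}}$, which is, however, simpler here than in the $\beta=2\alpha$ regime precisely because the stationarity of the cell problem eliminates the antisymmetric contribution; once this is in place, the remaining energy estimates are entirely routine.
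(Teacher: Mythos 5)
Your proposal is correct and follows essentially the same route as the paper, which disposes of this corollary exactly as it did Corollary~\ref{c:b>2a2}: symmetry and positive definiteness of $B_\textup{hom}$ (the antisymmetric contribution of \eqref{fas210} being absent precisely because the cell problem \eqref{fas<213} is elliptic in $y$, with $\tau$ entering only parametrically), then standard energy estimates with Young and Gronwall inequalities for \eqref{fas200}, and finally the reduction of the two--scale problem via the factorization \eqref{fas211} as in Corollary~\ref{c:b=2a2}. Your explicit verification that the second equality in \eqref{fas>210} still holds by testing \eqref{fas<213} against $\chi^i$ is a welcome elaboration of what the paper leaves implicit, but it is not a different argument.
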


Notice that, in the present case, i.e. $\beta<2\alpha$,
the dependence of the cell functions $\chi$ and $\zeta$
on the microtime $\tau$
is only parametric (as well as on $(x,t)$), via the coefficients
of the corresponding equations.

\subsection{Proof of Theorem~\ref{t:b=2a}.}
\label{s:main_dim}
\par\noindent

In the case $\beta=2\alpha$, by Theorems \ref{t:b=2aperv} and \ref{t:b=2afact},
we obtain that $v_\varepsilon\rightharpoonup v$ weakly in $L^2(\Omega_T)$,
where $v$ is the solution of \eqref{fas200}.
By using \eqref{fas100} and \eqref{eq:time_comp}, it follows that the convergence is, indeed,
strong in $L^2(\Omega_T)$.
Moreover, by the assumptions on $b_1$, it follows that
$1/b_1^\varepsilon\rightharpoonup \int_\mathcal{Y}\textrm{d}y/b_1$
weakly$^*$ in $L^\infty(\Om_T)$.
Therefore,
\begin{equation}
\label{fas400}
u_\varepsilon
=\frac{v_\varepsilon}{b_1^\varepsilon}
\rightharpoonup
v \int_\mathcal{Y}\frac{\textrm{d}y}{b_1}
=:u
\;.
\end{equation}
By replacing $v=u\big(\int_\mathcal{Y}\textrm{d}y/b_1\big)^{-1}$ in
\eqref{fas200}, we eventually get \eqref{b=2a} and \eqref{b=2aini}.

The uniqueness of the solution $u$ of \eqref{b=2a}--\eqref{b=2aini}
follows by
the uniqueness for equation \eqref{fas200}, complemented with the boundary and the initial
conditions \eqref{fas080} and \eqref{fas090}.

The cases $\beta\not=2\alpha$ are treated in the same way, of course by appealing to
Theorems \ref{t:b>2aperv} and \ref{t:b=2afact}, for $\beta>2\alpha$ and Theorems \ref{t:b<2aperv} and \ref{t:b<2afact},
for $\beta<2\alpha$, respectively.

\qed

\subsection{Some particular cases}
\label{s:spec}
\par\noindent
Here we discuss some very special cases in which the
upscaled equations take specific forms.

\subsubsection{Pure product case}
\label{s:ppc}
\par\noindent
In the case $b=0$, we can fix $\alpha=1$ without loss of generality,
since no other scaling, excepted $x/\varepsilon^\alpha$ and
$t/\varepsilon^\beta$, is present in the equation.

The homogenized equations for the limit function $u$, appearing
in Theorem~\ref{t:b=2a},
take the form
\begin{multline}
\int_{\mathcal{Q}}
\Big[
\frac{a_1}{\int_{\mathcal{S}} a_2^{-1}\,\textup{d}\tau}
\frac{\partial}{\partial t}
\Big(
     \frac{u}{b_1\int_\mathcal{Y}b_1^{-1}\,\textup{d}y}
\Big)
-
\frac{1}{a_2\int_{\mathcal{S}}a_2^{-1}\,\textup{d}\tau}
\textup{div}
\Big(
B_{\textup{eff}}
\nabla\Big(b_2
\frac{u}{\int_\mathcal{Y}b_1^{-1}\,\textup{d}y}\Big)
\Big)
\\
-
\frac{1}{a_2\int_{\mathcal{S}} a_2^{-1}\,\textup{d}\tau}
\textup{div}
\Big(
B
\nabla_y\Big(
             -b_2\zeta
             +\chi\cdot\nabla b_2
        \Big)
    \frac{u}{\int_\mathcal{Y}b_1^{-1}\,\textup{d}y}
\Big)
\Big]
\,\textup{d}y\,\textup{d}\tau
=
f\,,
\qquad
\textup{ in } \Omega_T\,,
\label{b=2a-sp}
\end{multline}
where
$B_{\emph{eff}} = B \nabla_y(y-\chi)$
and the cell functions $\chi$ and $\zeta$ satisfy
\eqref{fas213} and \eqref{fas215} with $b=0$ for $\beta=2$,
\eqref{fas>213} and \eqref{fas>215} with $b=0$ for $\beta>2$,
\eqref{fas<213} and \eqref{fas<215} with $b=0$ for $\beta<2$.

Incidentally, this is also the case when $\alpha<1$ even if $b\neq0$,
which means that the space oscillation is greater than the
non--product perturbation.

\subsubsection{Pure Fick case}
\label{s:pfc}
\par\noindent
In the case $b_1=b_2=1$ and $b=0$, as above
we can fix $\alpha=1$ without loss of generality,
In such a case, our problem is a particular case of the
one studied in \cite{AAB2017}, with the time oscillation
being a power of the space oscillation.

Indeed,
it is easy to prove that the cell function $\zeta$ is equal to zero
for any $\beta$
and
the cell function $\chi$ satisfies
\cite[equation~(7.1)]{AAB2017} for $\beta=2$,
\cite[equation~(7.2)]{AAB2017} for $\beta>2$,
and
\cite[equation~(7.3)]{AAB2017} for $\beta<2$,
respectively.

Moreover, the homogenized equation \eqref{b=2a} reduces to
\cite[equation~(7.4)]{AAB2017}, for any choice of $\beta$.
In particular, when the capacity is independent of the macrovariables,
the resulting equation turns to be the pure Fick equation
\begin{equation}
\label{fick:fin}
\frac{\int_{\mathcal{Y}} a_1\,\textup{d}y}
     {\int_{\mathcal{S}} a_2^{-1}\,\textup{d}\tau}
u_t
-
\Div
\Big(
\frac{\int_{\mathcal{Q}} (B_\text{eff}/a_2)\,\textup{d}y\,\textup{d}\tau}
{\int_{\mathcal{S}}a_2^{-1}\,\textup{d}\tau
}
\nabla u
\Big)
=
f
,
\end{equation}
where the capacity and the diffusion matrix appear mixed in the upscaled
diffusion coefficient.

\subsubsection{Pure Fokker--Planck case}
\label{s:pfpc}
\par\noindent
If $B$ is the identity matrix, it follows that $\chi$ is always
identically zero, so that $B_\textup{eff}=B$, and by periodicity
\begin{equation}
\label{pfpc000}
\int_{\mathcal{Y}}
\nabla_y\Big(
             \omega_{\alpha,1}\frac{b}{b_1}-b_2\zeta
        \Big)
\,\textup{d}y
=0
\,.
\end{equation}
Thus,
the limit equation reduces to
\begin{equation}
\int_{\mathcal{Q}}
\Big[
\frac{a_1}{\int_{\mathcal{S}} a_2^{-1}\,\textup{d}\tau}
\frac{\partial}{\partial t}
\Big(
     \frac{u}{b_1\int_\mathcal{Y}b_1^{-1}\,\textup{d}y}
\Big)
-
\frac{1}{a_2\int_{\mathcal{S}} a_2^{-1}\,\textup{d}\tau}
\Delta
\Big(b_2
\frac{u}{\int_\mathcal{Y}b_1^{-1}\,\textup{d}y}\Big)
\Big]
\,\textup{d}y\,\textup{d}\tau
=
f\,,\qquad
\textup{ in } \Omega_T
,
\label{b=2a-pfpc}
\end{equation}
which does not depend on the non--product perturbation $b$.
We remark that this is also valid under the milder hypothesis
that $B$ does not depend on $y$.

If in addition $\alpha<1$ or $b=0$, then also the cell function
$\zeta=0$ and therefore \eqref{pfpc000} is trivially satisfied.

The limit equation in the pure Fokker--Planck case has been
written in the from \eqref{b=2a-pfpc} to make it
as close as possible to the starting Fokker--Planck problem.
However, it is possible to formally reduce it to a standard parabolic
equation with lower order terms, in which the coefficients are
expressed in terms of the mean value on $\mathcal{Q}$ of the
coefficients of the original equation, i.e., $a_1$, $a_2$, $b_1$, and
$b_2$.

Finally, we remark that in the very particular case in which
the coefficients $a_1$, $a_2$, $b_1$, and $b_2$,
do not depend on the macroscopic variables, the equation
\eqref{b=2a-pfpc}
becomes
\begin{equation}
\label{eq:fin}
\frac{\int_{\mathcal{Y}} (a_1/b_1)\,\textup{d}y}
     {\int_{\mathcal{S}} a_2^{-1}\,\textup{d}\tau
     \int_\mathcal{Y}b_1^{-1}\,\textup{d}y}
u_t
-
\Delta
\Big(
\frac{\int_{\mathcal{S}} (b_2/a_2)\,\textup{d}\tau}
{\int_\mathcal{Y}b_1^{-1}\,\textup{d}y
 \int_{\mathcal{S}} a_2^{-1}\,\textup{d}\tau}
u
\Big)
=
f\,,\qquad\qquad
\textup{ in } \Omega_T
,
\end{equation}
which shows that, even in such a special case, the capacity and the
Fokker coefficient are mixed in the upscaled equation.

\end{document}